\newcommand{\mathsym}[1]{{}}
\newtheorem{thm}{Theorem}[section]
\newtheorem{lem}[thm]{Lemma}
\newtheorem{prop}[thm]{Proposition}
\theoremstyle{definition}
\newtheorem{defn}{Definition}[section]
\numberwithin{equation}{section}
\newtheorem{rem}{Remark}[section]
\theoremstyle{example}
\newcommand{\be}{\begin{equation}}
\newcommand{\ee}{\end{equation}}
\newcommand{\bag}{\begin{eqnarray}}
\newcommand{\eag}{\end{eqnarray}}
\newcommand{\ban}{\begin{eqnarray*}}
\newcommand{\ean}{\end{eqnarray*}}
\newcommand{\ba}{\begin{aligned}}
\newcommand{\ea}{\end{aligned}}
\newcommand{\bpf}{\begin{proof} }
\newcommand{\epf}{\end{proof} }
\begin{document}
\title{Positivity in Foliated Manifolds and Geometric Applications}
\author{Yashan Zhang}
\address{Institute of Mathematics, Hunan University, Changsha 410082, China}
\email{yashanzh@hnu.edu.cn}
\author{Tao Zheng}
\address{School of Mathematics and Statistics, Beijing Institute of Technology, Beijing 100081, China}
\email{zhengtao08@amss.ac.cn}
\subjclass[2010]{53C25, 53C12, 35J60, 32W20, 58J05}
\thanks{Zhang is partially supported by Fundamental Research Funds for the Central Universities (No. 531118010468) and National Natural Science Foundation of China (No. 12001179)}
\thanks{Zheng is partially supported by Beijing Institute of Technology Research Fund Program for Young Scholars}
\keywords{(holomorphic) foliation, invariant transverse measure, positivity of basic Bott-Chern class, normal canonical bundle, transverse K\"ahler-Einstein metric}
\begin{abstract}
We introduce the notion of positivity for a real basic $(1,1)$ class in basic Bott-Chern cohomology group on foliated manifolds, and study the relationship between this positivity and the negativity of transverse holomorphic sectional curvature and give some geometric applications.
\end{abstract}
\maketitle

\section{Introduction}
Since there is no general method to solve differential equations
even in $\mathbb{R},$
mathematicians rather try to study the geometrical and topological properties of
global manifolds and their asymptotic behaviors.   The exact purpose of
foliation theory is the qualitative study of differential equations which was initiated by
the works of  Poincar\'e and  Bendixson, and developed later by  Ehresmann,
 Reeb,  Haefliger, and many others. Since then the subject has been a
wide field in mathematical research and there are still many  open questions in some directions
in the theory of foliations (see, for example, \cite{elk2014} and references therein).

The notion of transverse structure plays a key role in the study and classification of foliations. This is a research object of global analysis, and we refer to \cite{elk2014} and references therein for a detailed survey. Since El Kacimi-Alaoui \cite{elk90} proves the transverse Calabi-Yau theorem, there are many transverse counterparts on foliated manifolds (especially Sasakian manifolds) of the famous results on complex manifolds (especially the K\"ahler manifolds), such as  the existence of canonical metrics on Sasakian manifolds \cite{fow09}, Sasaki-Einstein metrics and K-(semi-)stability on Sasakian manifolds \cite{cs15,cs18},   the Frankel conjecture on Sasakian manifolds \cite{hs15,hs16}, the Uhlenbeck-Yau theorem \cite{uhlenbeckyau} about the existence of Hermitian-Einstein structure \cite{bs2010}, foliated Hitchin-Kobayashi correspondence \cite{baragliahekmati2018},  the geometric pluripotential theory \cite{hl18} on Sasakian manifolds, the transverse fully nonlinear equations \cite{fengzhengsasaki1} corresponding to \cite{twjams10,twjams,stw1503}, and the Higgs bundle on foliated manifolds \cite{wuzhangscm}.

In this paper, we introduce notions of the positivity of basic $(1,1)$ classes in the Bott-Chern cohomological group $H^{1,1}_{\mathrm{BC}}(X/\mathcal{F},\mathbb{R})$ using the transverse invariant measure. Then we consider the relationship between this positivity and negativity of transverse holomorphic sectional curvature and also give some geometric applications.

A fundamental conjecture of Yau in 1970's predicts that a compact K\"ahler manifold admitting a K\"ahler metric of negative holomorphic sectional curvature has an ample canonical line bundle, which has been comfirmed by Wu-Yau \cite{wuyauinventiones} and Tosatti-Yang \cite{tosattiyangjdg} (also see \cite{DTjdg,wuyaucag,YZtams,Zmz,ZZ} for some further developments). Our first theorem is a transverse version of the works of Wu-Yau \cite{wuyauinventiones} and Tosatti-Yang \cite{tosattiyangjdg}.
\begin{thm}
\label{tmainthmtyjdg}
Let $(X,\mathcal{F})$ be a closed oriented$,$ taut$,$ transverse K\"ahler foliated manifold$,$ where $\mathcal{F}$ is the foliation with complex codimension $n$ and $\omega$ is a transverse K\"ahler metric with nonpositive transverse holomorphic sectional curvature. Then the normal canonical bundle $K_{X/\mathcal{F}}$ is transverse nef. If furthermore $\omega$ is a transverse K\"ahler metric with negative transverse holomorphic sectional curvature, then there exists a smooth basic function $u\in C^\infty(X/\mathcal{F},\mathbb{R})$ such that $ \omega_u :=-\mathrm{Ric}(\omega)+\sqrt{-1}\partial\bar\partial u$ is the transverse K\"ahler-Einstein metric  with $\mathrm{Ric}(\omega_u)=-\omega_u$.
\end{thm}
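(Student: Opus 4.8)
The plan is to run the continuity method of Wu--Yau \cite{wuyauinventiones} and Tosatti--Yang \cite{tosattiyangjdg} entirely inside the basic (transverse) category. Write $\chi:=-\ric(\omega)$, a basic $(1,1)$-form representing the basic first Chern class $c_1^B(K_{X/\F})$ of the normal canonical bundle. For $t\in(0,1]$ I would seek basic functions $\varphi_t\in C^\infty(X/\F,\R)$ solving the transverse complex Monge--Amp\`ere equation
\begin{equation}
\label{eq:tma}
\big(t\omega+\chi+\ddb\varphi_t\big)^n=e^{\varphi_t}\omega^n,\qquad \omega_t:=t\omega+\chi+\ddb\varphi_t>0,
\end{equation}
which is transversally equivalent to the Ricci identity $\ric(\omega_t)=-\omega_t+t\omega$. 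Since $[\omega_t]=t[\omega]+c_1^B(K_{X/\F})$, solvability for all small $t>0$ with uniform control of $\omega_t$ will force $c_1^B(K_{X/\F})$ to lie in the closure of the transverse K\"ahler cone (transverse nef), while a uniform \emph{two-sided} bound on $\omega_t$ will produce a genuine transverse K\"ahler--Einstein metric in the limit $t\to 0$.

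For fixed $t$ equation \eqref{eq:tma} is solvable by the transverse Calabi--Yau theorem of El Kacimi-Alaoui \cite{elk90} as soon as $t[\omega]+c_1^B(K_{X/\F})$ is a transverse K\"ahler class, so I would establish the range of solvable $t$ by openness and closedness. Openness is the basic-category implicit function theorem, the linearization being $\Delta_{\omega_t}-1$, which is invertible on basic H\"older spaces. Closedness rests on a priori estimates uniform in $t$: a basic $C^0$ estimate for $\varphi_t$ (maximum principle and Moser iteration against the invariant transverse volume $\omega^n$), the second-order estimate below, and then basic Evans--Krylov and Schauder estimates, which hold because every quantity is basic and the transverse operators are uniformly elliptic along the leaf space. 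Tautness enters here: it supplies the invariant transverse measure and the transverse divergence theorem that legitimize the integrations by parts in the integral estimates.

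The crucial ingredient is the transverse Chern--Lu/Schwarz-lemma inequality applied to the identity map from $(X,\omega_t)$ to $(X,\omega)$. Computing $\Delta_{\omega_t}\log\tr_{\omega_t}\omega$ and substituting $\ric(\omega_t)=-\omega_t+t\omega$, the only dangerous term involves the transverse curvature of $\omega$; Royden's polarization trick converts the holomorphic bisectional contribution into the transverse holomorphic sectional curvature, which is $\leq 0$, respectively $\leq-\kappa<0$. A maximum-principle argument then gives $\tr_{\omega_t}\omega\leq C$ with $C$ independent of $t$, that is a uniform lower bound $\omega_t\geq c\,\omega$; combined with \eqref{eq:tma} and the $C^0$ bound this yields the matching upper bound $\omega_t\leq C\omega$. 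I expect the main obstacle to be exactly the foliated implementation of this step: checking that the curvature computation and Royden's trick descend to the normal bundle, and that the maximum of the basic quantity $\log\tr_{\omega_t}\omega$ can be handled basically (it is attained along a leaf closure, where the transverse Hopf lemma applies). Strict negativity is what promotes the lower bound to a bound uniform in $t$, and this is precisely what separates the two conclusions.

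Finally I would pass to the limit $t\to0$. In the nonpositive case the uniform upper bound on $\omega_t$ lets me extract a weak limit that is a closed positive basic current in $c_1^B(K_{X/\F})$; since $t[\omega]+c_1^B(K_{X/\F})$ is transverse K\"ahler for every $t>0$, the class $c_1^B(K_{X/\F})$ lies in the closure of the transverse K\"ahler cone, so $K_{X/\F}$ is transverse nef. In the negative case the two-sided bound $c\,\omega\leq\omega_t\leq C\omega$ together with the higher-order estimates gives $C^\infty$ subconvergence $\varphi_t\to u$ and $\omega_t\to\omega_u:=\chi+\ddb u=-\ric(\omega)+\ddb u>0$, a transverse K\"ahler metric with $\ric(\omega_u)=-\omega_u$; this $u\in C^\infty(X/\F,\R)$ is the desired basic potential, and the existence of $\omega_u$ simultaneously shows that $c_1^B(K_{X/\F})$ is a transverse K\"ahler class.
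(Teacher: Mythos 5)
Your proposal takes essentially the same route as the paper: the Wu--Yau/Tosatti--Yang continuity method run entirely in the basic category, with the transverse Schwarz lemma and Royden's trick supplying the uniform trace bound, nonpositive transverse holomorphic sectional curvature yielding nefness through solvability of the twisted equation for all $t>0$, and strict negativity upgrading this to the two-sided metric bounds needed to pass to a transverse K\"ahler--Einstein limit. One small imprecision: your fixed-$t$ equation has $e^{\varphi_t}$ on the right, so it is of Aubin--Yau rather than Calabi--Yau type and does not follow directly from El Kacimi-Alaoui's theorem; the paper proves the required transverse Aubin--Yau statement from scratch (Lemma \ref{taubinyau}, including the second-order estimate in the style of Tosatti--Weinkove), and that is where most of the technical work actually lies.
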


As an application of Theorem \ref{tmainthmtyjdg} and results of Touzet \cite[Theorem 1.2]{touzettoulouse2010}, we are able to prove the following
\begin{thm}\label{mainthm2}
Suppose that $X$ is a compact K\"ahler manifold and $\mathcal{F}$ is (regular) holomorphic with complex codimension $n$. Assume also that $c_1(T_{\mathcal{F}})=0$ and that there exists a transverse K\"ahler metric with  negative transverse holomorphic sectional curvature.  Then the lift of $\mathcal{F},$ up to some finite covering of $X,$ is defined by a locally trivial fibration over a manifold whose first Chern class is quasi-negative$;$ indeed$,$ $\mathcal{F}$ is defined by the Iitaka-Kodaira fibration of $X$.
\end{thm}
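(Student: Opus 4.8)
The plan is to combine the transverse K\"ahler--Einstein structure produced by Theorem \ref{tmainthmtyjdg} with Touzet's structure theorem \cite[Theorem 1.2]{touzettoulouse2010}. First I would verify that the standing hypotheses of Theorem \ref{tmainthmtyjdg} are met: since $\mathcal{F}$ is regular holomorphic, the normal bundle $N_{\mathcal{F}}$ is a holomorphic vector bundle, hence canonically oriented, and the K\"ahler form of $X$ induces a transverse K\"ahler metric, so $(X,\mathcal{F})$ is a transverse K\"ahler foliated manifold; orientability and tautness should follow from the regularity of $\mathcal{F}$ together with $c_1(T_{\mathcal{F}})=0$, which trivializes the leafwise canonical bundle $\det T_{\mathcal{F}}^{*}$ and thereby makes the mean-curvature (tautness) class exact. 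Granting this, the hypothesis of \emph{negative} transverse holomorphic sectional curvature lets me invoke the second half of Theorem \ref{tmainthmtyjdg} to produce a basic function $u$ with $\omega_u=-\mathrm{Ric}(\omega)+\sqrt{-1}\partial\bar\partial u$ transverse K\"ahler--Einstein and $\mathrm{Ric}(\omega_u)=-\omega_u$. In particular the basic first Chern class $c_1^{B}(K_{X/\mathcal{F}})$ is represented by the transverse K\"ahler form $\tfrac{1}{2\pi}\omega_u$, so $K_{X/\mathcal{F}}$ is a positive basic class, i.e. ``transversely of general type.''

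Next I would record the cohomological consequence of $c_1(T_{\mathcal{F}})=0$. From the exact sequence $0\to T_{\mathcal{F}}\to TX\to N_{\mathcal{F}}\to 0$ one gets $c_1(K_X)=c_1(K_{X/\mathcal{F}})+c_1(\det T_{\mathcal{F}}^{*})=c_1(K_{X/\mathcal{F}})$ in $H^{2}(X,\mathbb{R})$, because $c_1(\det T_{\mathcal{F}}^{*})=-c_1(T_{\mathcal{F}})=0$. Thus the transverse positivity of the canonical class is exactly compatible with the Iitaka-theoretic behaviour of $K_X$: the form $\omega_u$ is a closed semipositive $(1,1)$-form on $X$ that is positive precisely in the $n$ normal directions and degenerate along the leaves, which is the numerical shape of a semiample canonical class whose Iitaka fibration collapses the leaves of $\mathcal{F}$.

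With the data ``transverse K\"ahler holomorphic foliation, $c_1(T_{\mathcal{F}})=0$, semi-negative transverse holomorphic sectional curvature (from $\omega$), and positive transverse canonical bundle (from $\omega_u$)'' assembled, I would then apply Touzet's structure theorem \cite[Theorem 1.2]{touzettoulouse2010}. This furnishes a finite \'etale cover $\nu\colon\widetilde{X}\to X$ over which the lifted foliation $\widetilde{\mathcal{F}}$ has compact leaves and is defined by a locally trivial holomorphic fibration $\pi\colon\widetilde{X}\to B$ onto a compact K\"ahler manifold $B$ of complex dimension $n$; moreover the transverse K\"ahler--Einstein metric $\omega_u$ descends to a K\"ahler metric on $B$ of negative Ricci curvature, so that $c_1(B)$ is quasi-negative and $K_B$ is big.

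Finally I would identify $\pi$ with the Iitaka--Kodaira fibration. Since $c_1(T_{\mathcal{F}})=0$, each fiber $F$ of $\pi$ satisfies $c_1(K_{\widetilde{X}}|_F)=c_1(K_F)=-c_1(T_F)=0$, so $K_{\widetilde{X}}$ is numerically trivial along fibers while $K_{\widetilde{X}}=\pi^{*}K_B$ up to a numerically trivial relative term; as $K_B$ is big this yields $\kappa(\widetilde{X})=\kappa(B)=\dim B=n$ and shows $\pi$ is the Iitaka fibration of $\widetilde{X}$, which descends through $\nu$ to identify $\mathcal{F}$ with the Iitaka--Kodaira fibration of $X$. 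The main obstacle is twofold: verifying tautness (and orientability) so as to legitimately enter Theorem \ref{tmainthmtyjdg}, and precisely matching the transverse curvature/positivity data to the hypotheses of \cite[Theorem 1.2]{touzettoulouse2010}. The genuinely hard input—forcing the leaves to be compact and the foliation to be an honest fibration rather than one with dense leaves—is supplied by Touzet's theorem and is not to be reproved here.
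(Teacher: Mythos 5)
Your proposal follows essentially the same route as the paper: apply Theorem \ref{tmainthmtyjdg} to get the transverse K\"ahler--Einstein metric $\tilde\omega$ with $\mathrm{Ric}(\tilde\omega)=-\tilde\omega$, use $c_1(T_{\mathcal{F}})=0$ and adjunction to conclude that $c_1(X)=c_1(X/\mathcal{F})$ is represented by the semi-negative closed $(1,1)$-form $-\tilde\omega/2\pi$ with $c_1(X)^n\neq 0$, and then invoke \cite[Theorem 1.2]{touzettoulouse2010} for the finite cover, the locally trivial fibration, and the identification with the Iitaka--Kodaira fibration. The one quibble is your justification of tautness: it does not come from $c_1(T_{\mathcal{F}})=0$ killing a mean-curvature class, but simply from the fact that the leaves of a holomorphic foliation are complex submanifolds of the compact K\"ahler manifold $X$ and hence minimal, so that $\Omega^m/m!$ (with $\Omega$ the K\"ahler form of $X$ and $m$ the complex leaf dimension) is a closed leafwise volume form satisfying the Rummler--Sullivan criterion.
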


The outline of the paper is as follows. In Section \ref{section:preliminaries}, we collect preliminaries of global analytic and geometric aspects  of foliated manifolds. In section \ref{section:mainthm}, we study the geometric partial differential equations on foliated manifolds and as its geometric applications, we prove Theorem \ref{tmainthmtyjdg} and Theorem \ref{mainthm2}. In Appendix \ref{section:appendix}, we collect preliminaries for distribution and current in order to deduce the transverse versions of the Poincar\'e lemma and  the Dolbeault-Grothendieck lemma.

After the first version of this paper was posted on the arXiv, we learned that the Sasakian case of Theorem \ref{tmainthmtyjdg} was independently proved by Yong Chen, the Wu-Yau theorem on Sasakian manifolds, arXiv:2109.05414.

\noindent {\bf Acknowledgements}
The authors thank professors Huai-Dong Cao, Jean-Pierre Demailly, Valentino Tosatti and Ben Weinkove for invaluable directions. The authors also thank Professor David Baraglia and Pedram Hekmati for discussing the partition of unity for basic functions and examples of foliated manifolds with  taut foliation, and Professor Sheng Rao and Dr. Runze Zhang for pointing out the mistake in defining the notion of the positivity of basic $(p,p)$ forms and currents.

\section{Preliminaries}
\label{section:preliminaries}
In this section, we collect some preliminaries of global analytic and geometric aspects of foliated manifolds (see \cite{barreelkacimi-alouifoliations,elk2014} for example).
In what follows, Greek indices, Latin indices and capital Latin indices run from $1$ to $r$, $1$ to $n$ and $1$ to $k$ respectively, unless otherwise indicated.
\subsection{Foliation}
\begin{defn}
\label{kacimidefn2.1.1}
Let $X$ be a smooth manifold with $\dim_{\mathbb{R}}X=k+n$.  Then a foliation $\mathcal{F}$ on $X$ with real dimension $k$ and real codimension $n$
is defined by an atlas $\mathscr{A}$  on $X$ consisting of homeomorphism $\kappa$ of open set $U_\kappa\subset X$ to $\tilde V_\kappa\times\tilde U_\kappa\subset\mathbb{R}^{k }\times \mathbb{R}^n$  such that
$$
\kappa\circ\kappa'^{-1}:\;\kappa'(U_{\kappa}\cap U_{\kappa'})
\to \kappa(U_{\kappa}\cap U_{\kappa'}),\quad (t,x)\mapsto(s,y)=(s(t,x),y(x)),
$$
with $(t,x),\,(s,y)\in \mathbb{R}^k\times\mathbb{R}^n$.

This kind of local coordinate patch $(U_\kappa,\kappa)$ is called to be distinguished for the foliation $\mathcal{F}$.
\end{defn}
We denote
by $T_{\mathcal{F}}$  the tangent bundle to $\mathcal{F}$, and by $\nu\mathcal{F}$ the quotient $T_X/T_{\mathcal{F}}$, which is the
normal bundle to $\mathcal{F}$. Let $\mathfrak{X}(\mathcal{F})$  denote the space of all the smooth sections of $T_{\mathcal{F}}$.
If $g$ is a Riemannian metric on $X$, then $(X,g,\mathcal{F})$ is called a Riemannian foliated manifold \cite{vaisman}. In the distinguished coordinate patch $(U;t,x)$, we set
\begin{equation}
e_i:=\frac{\partial}{\partial x^i}-\sum_{P=1}^{ k}A_i^P\frac{\partial}{\partial t^P},\quad 1\leq i\leq n,
\end{equation}
such that
\begin{equation}
g\left(\frac{\partial}{\partial t^P},e_i\right)=0,\quad 1\leq i\leq n,\quad 1\leq P\leq k.
\end{equation}
A direct calculation yields that
\begin{equation}
A:=\sum_{1\leq i\leq n,\,1\leq P\leq k}A_i^P\frac{\partial}{\partial t^P}\otimes \mathrm{d}x^i
\end{equation}
is a well-defined tensor and that
\begin{equation}
T_X:=T_{\mathcal{F}}\oplus T_{\mathcal{F}}^{\perp},
\end{equation}
with respect to the Riemannian metric $g$, where
\begin{equation}
T_{\mathcal{F}\upharpoonright U}=\mathrm{Span}_{\mathbb{R}}\left\{\frac{\partial}{\partial t^1},\cdots,\frac{\partial}{\partial t^k}\right\},\quad
 T_{\mathcal{F}\upharpoonright U}^{\perp}
:=\mathrm{Span}_{\mathbb{R}}\{e_1,\cdots,e_n\}.
\end{equation}
Note that $\nu\mathcal{F}$ is smoothly isomorphic to $T_{\mathcal{F}}^{\perp}$. Let \begin{equation}
\label{defnthetaalpha}
\theta^P:=\mathrm{d}t^P+A^P_i \mathrm{d}x^i,\quad 1\leq P\leq k.
\end{equation}
Then $\{\mathrm{d}x^i,\theta^P\}$ is the dual of $\{e_i,\frac{\partial}{\partial t^P}\}$.

A differential form $u$ is called of the type $\{p,q\}$ and order $s\in \mathbb{N}\cup\{\infty\}$ if in the distinguished local coordinate patch $(U;t,x)$ it has the expression
\begin{equation}
u=\frac{1}{p!q!}u_{i_1,\cdots,i_p;P_1,\cdots,P_q}\mathrm{d}x^{i_1}\wedge
\cdots\wedge\mathrm{d}x^{i_p}\wedge\theta^{P_{1}}\wedge\cdots\wedge\theta^{P_q},
\end{equation}
where $u_{i_1,\cdots,i_p;P_1,\cdots,P_q}\in C^s(U,\mathbb{C})$  is skew-symmetric separately in the indices $i$ and the indices $P$.

Each differential $r$-form has unique decomposition as a sum of form of the type $\{p,q\}$ with $p+q=r$ and the same order (see for example \cite{vaisman}).
Let ${}^s\mathscr{E}_{\mathcal{F}}^p(X)$ (resp. ${}^s\mathscr{D}_{\mathcal{F}}^p(X)$) denote the set of the differential forms of type $\{p,k\}$ and of order $s$ (resp. with compact support).

A differential $r$ form $\varphi\in\Omega^r(X)$ (resp. a $r$ current $T\in{}^s\mathscr{D}'^r(X)$) is said to be basic if it satisfies $i_\xi\varphi=\mathcal{L}_\xi\varphi=0$ (resp. $i_\xi T=\mathcal{L}_\xi T=0$) for each $\xi\in\mathfrak{X}(\mathcal{F})$.  Let $\Omega^r(X/\mathcal{F})$ (resp. ${}^s\mathscr{D}'^r(X/\mathcal{F})$) denote
the space of basic forms (resp. basic current) of degree $r$ on the foliated manifold $(X,\mathcal{F})$. In particular, a function $f:\;X\to \mathbb{C}$ is called basic if $f(\varphi_\xi(t))$ is independent of $t$, where $\varphi_\xi(t)$ is the integral curve of $\xi$ for each $\xi\in\mathfrak{X}(\mathcal{F})$.  Let $C^k (X/\mathcal{F},  \mathbb{C})$ denote the set of basic functions in $C^k(X, \mathbb{C})$ with $k\in \mathbb{N} \cup\{\infty\}$.
\begin{lem}\label{lembasicfunctionwithcompactsupport0}
Let $X$ be a smooth manifold with $\dim_{\mathbb{R}}X=k+n$, and $\mathcal{F}$ a foliation on $X$ with real dimension $k$. Then any basic function defined on a distinguished chart $(U,\kappa)$ with compact support is zero.
\end{lem}
\begin{proof}
Without loss of generality, we write
$$
\kappa:\;U\to \tilde{V}\times \tilde{U}\subset \mathbb{R}^k\times \mathbb{R}^n.
$$
Then the conclusion follows from the fact that
$$
\partial (\tilde V\times \tilde U)=\left((\partial \tilde V)\times \tilde U\right)\bigcup \left(\tilde V\times (\partial \tilde U)\right).
$$
\end{proof}
It follows from Lemma \ref{lembasicfunctionwithcompactsupport0} that there does not exist a partition of unity subordinate to a given cover by basic functions (See for example \cite{baragliahekmati2018}).

Note that a basic current $T\in {}^s\mathscr{D}'^{n-r}(X/\mathcal{F})$ is a continuous linear functional
$$
T:\;{}^s\mathscr{D}_{\mathscr{F}}^r(X)\to\mathbb{C}
$$
with $\mathcal{L}_\xi T=0$ for each $\xi\in\mathfrak{X}(\mathcal{F})$ (see for example \cite{craioveanuputa1980,vaisman}).

In the distinguished coordinate patch $(U;t^1,\cdots,t^k,x^1,\cdots,x^n)$, a basic $p$ current $T$ can be written as
 $$
 T= \sum_{i_1<\cdots<i_p} T_{i_1,\cdots,i_p}\mathrm{d}x^{i_1}\wedge\cdots\wedge\mathrm{d}x^{i_p},
 $$
 where $\frac{\partial T_{i_1,\cdots,i_p}}{\partial t^\alpha} =0$ for $1\leq \alpha\leq k$. Hence it follows from Theorem \ref{tthmbasiccurrentcohomology} that the basic cohomology group $H^*(X/\mathcal{F})$ is given by
\begin{equation}
H^p(X/\mathcal{F},\mathbb{R}):=\frac{\{\varphi\in\Omega^p(X/\mathcal{F}):\mathrm{d}\varphi=0\}}{\mathrm{d}\Omega^{p-1}(X/\mathcal{F})}
\simeq\frac{\{T\in\mathscr{D}'^p(X/\mathcal{F}):\mathrm{d}T=0\}}{\mathrm{d}\mathscr{D}'^{p-1}(X/\mathcal{F})}.
\end{equation}
\begin{defn}
A codimension $n$ foliation on $X$ is defined by an open cover $\mathscr{U}:=\{U_i\}_{i\in I}$ with
submersions $\tau_i:\;U_i\to T$ over a transverse manifold $T$ with $\dim_{\mathbb{R}}T=n,$  and for each
nonempty intersection $U_i\cap U_j,$  a diffeomorphism
\begin{equation*}
\gamma_{ij}:\;\tau_i(U_i\cap U_j)\to \tau_j(U_i\cap U_j)
\end{equation*}
satisfying $\tau_j(x)=\gamma_{ij}\circ \tau_i(x)$ for all $x\in U_i\cap U_j$.  We say that $\{U_i,\tau_i,T,\gamma_{ij}\}$  is a
foliated cocycle defining $\mathcal{F}$.
\end{defn}
The foliation $\mathcal{F}$ is called to be transversely orientable if $T$ can be given an
orientation preserved by all the local diffeomorphisms $\{\gamma_{ij}\}$.

The foliation $\mathcal{F}$  is called to be Riemannian if there exists a Riemannian metric on $T$
such that all the local diffeomorphisms  $\{\gamma_{ij}\}$ are isometries. Using the submersions
$f_i:\;U_i\to T$
 one can construct on $M$ a Riemannian metric which can be written
in local coordinates as
\begin{equation}
g=\sum_{\gamma=1}^k\theta^\gamma\otimes \theta^\gamma+\sum_{p,q=1}^ng_{pq}(x)\mathrm{d}x^p\otimes \mathrm{d}x^q.
\end{equation}
This metric is called bundle-like.

The foliation $\mathcal{F}$ is called to be transversely holomorphic (resp. K\"ahler) if $T$ is a complex manifold (resp. a K\"ahler manifold)
and the all the  diffeomorphisms $\{\gamma_{ij}\}$ are local biholomorphisms (resp. in addition preserving the K\"ahler form on $T$).

It follows from \cite{ekhe86} that $\dim_{\mathbb{R}}H^n(X/\mathcal{F},\mathbb{R})=0$ or $1$. If $\dim_{\mathbb{R}}H^n(X/\mathcal{F},\mathbb{R})=1$, then
  $\mathcal{F}$ is called homologically orientable, which is equivalent to the existence of a (real) volume form
on the leaves $\chi$ which is $\mathcal{F}$-relatively closed, i.e., $\mathrm{d}\chi(\xi_1,\cdots,\xi_k,\cdot)=0$ for $\xi_1,\cdots,\xi_k\in\mathfrak{X}(\mathcal{F})$  (cf. \cite{masa1992} based on \cite{sullivancmh1979,rummlercmh1979,heafligerjdg1980}).  In this case, we can complete the transverse metric
by a Riemannian metric along the leaves to obtain a Riemannian metric on the
whole manifold for which the leaves are minimal and $\chi$ is associated to this metric and we also say that $ \mathcal{F} $ is taut.
This hypothesis will enable one to define an inner product on $\Omega^r(X/\mathcal{F})$  without
using the basic manifold $T$.
\begin{lem}
[Basic Stokes' theorem \cite{baragliahekmati2018}]
\label{basicstokes}
Let $(X,\mathcal{F})$ be a closed foliated manifold with a taut foliation $\mathcal{F}$ of
$\dim_{\mathbb{R}}\mathcal{F}=k $ and codimension $n$.  Then we have
\begin{equation}
\label{bst}
\int_X(\mathrm{d}_{\mathrm{B}}\varphi)\wedge\chi=0,\quad \forall\;\varphi\in\Omega^{n-1}(X/\mathcal{F}),
\end{equation}
where $\chi $ is the volume form of leaves which is $\mathcal{F}$-relatively closed.
\end{lem}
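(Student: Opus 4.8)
The plan is to reduce the identity to the ordinary Stokes' theorem on the closed $(n+k)$-dimensional manifold $X$, applied to the globally defined form $\varphi\wedge\chi$. First I would observe that $\deg(\varphi\wedge\chi)=(n-1)+k=n+k-1$, so $\mathrm{d}(\varphi\wedge\chi)$ is a top-degree form and, since $X$ is closed (no boundary), the usual Stokes' theorem gives $\int_X\mathrm{d}(\varphi\wedge\chi)=0$. The Leibniz rule then reads
\begin{equation}
\mathrm{d}(\varphi\wedge\chi)=\mathrm{d}\varphi\wedge\chi+(-1)^{n-1}\varphi\wedge\mathrm{d}\chi.
\end{equation}
Because $\varphi$ is basic, in a distinguished patch $(U;t,x)$ one has $\varphi=\sum_{|I|=n-1}\varphi_I(x)\,\mathrm{d}x^I$ with coefficients independent of the leaf variables $t$; hence $\mathrm{d}\varphi$ again involves only the $\mathrm{d}x^i$ with $t$-independent coefficients, i.e. $\mathrm{d}\varphi$ is basic and coincides with $\mathrm{d}_{\mathrm{B}}\varphi$. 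Thus the first term on the right already equals $\mathrm{d}_{\mathrm{B}}\varphi\wedge\chi$, and it remains to handle the second.

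The crux, and the step I expect to be the main obstacle, is to show that $\varphi\wedge\mathrm{d}\chi=0$ identically on $X$; this is exactly where the hypothesis on $\chi$ enters. I would argue by the type decomposition into $\{p,q\}$ pieces. The leafwise volume form $\chi$ is of type $\{0,k\}$, so $\mathrm{d}\chi$ has degree $k+1$ and decomposes into components of types $\{p,q\}$ with $p+q=k+1$ and $q\le k$, forcing $p\ge 1$. Since $\varphi$ is of type $\{n-1,0\}$, the product $\varphi\wedge\mathrm{d}\chi$ can reach the top type $\{n,k\}$ only through the $\{1,k\}$ component of $\mathrm{d}\chi$: any component of $\mathrm{d}\chi$ with $p\ge 2$ contributes transverse degree $n-1+p\ge n+1>n$ and so wedges to zero. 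Now contracting a $\{1,k\}$ term against the $k$ leaf fields $\partial/\partial t^P\in\mathfrak{X}(\mathcal{F})$ returns precisely the transverse $1$-form carried by that component, while every component with $q<k$ is annihilated by such a contraction. Therefore the relatively closed condition $\mathrm{d}\chi(\xi_1,\dots,\xi_k,\cdot)=0$ for $\xi_j\in\mathfrak{X}(\mathcal{F})$ says exactly that the $\{1,k\}$ part of $\mathrm{d}\chi$ vanishes, so $\varphi\wedge\mathrm{d}\chi=0$ pointwise.

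Combining these observations gives $\mathrm{d}(\varphi\wedge\chi)=\mathrm{d}_{\mathrm{B}}\varphi\wedge\chi$ everywhere, and integrating over $X$ yields
\begin{equation}
\int_X(\mathrm{d}_{\mathrm{B}}\varphi)\wedge\chi=\int_X\mathrm{d}(\varphi\wedge\chi)=0,
\end{equation}
as claimed. The genuine content lies entirely in the type bookkeeping of the previous paragraph together with the precise translation of ``$\mathcal{F}$-relatively closed'' into the vanishing of the $\{1,k\}$ part of $\mathrm{d}\chi$; once this is established the conclusion is just the classical Stokes' theorem. I would also note that tautness (equivalently, homological orientability, $\dim_{\mathbb{R}}H^n(X/\mathcal{F},\mathbb{R})=1$) is what guarantees the existence of a globally defined leaf volume form $\chi$ that is relatively closed, so the hypothesis feeds into the argument exactly through the availability of $\chi$.
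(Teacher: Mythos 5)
Your proposal is correct; the paper itself offers no proof of this lemma (it simply cites \cite{baragliahekmati2018}), and your argument --- applying the ordinary Stokes' theorem to $\varphi\wedge\chi$ on the closed manifold $X$ and then using the type decomposition to show that the $\mathcal{F}$-relative closedness of $\chi$ is exactly the vanishing of the $\{1,k\}$ component of $\mathrm{d}\chi$, which is the only component that can pair with the $\{n-1,0\}$ form $\varphi$ to top degree --- is precisely the standard Rummler--Sullivan-type argument used in that reference. No gaps.
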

Let $\pi:\;E\to X$ be a complex vector bundle defined by a cocycle $\{U_\kappa,g_{\kappa\kappa'},G\}$,
where $\{U_\kappa\}$ is an open cover of $X$ and $g_{\kappa\kappa'}:\;U_\kappa\cap U_{\kappa'}\to G\subset GL(n,\mathbb{C})$ are the
transition functions.
Let $\Gamma(E)$ (resp. $\Gamma_c(E)$) denote the set of all smooth sections (resp. with compact support) of $E$.

A connection on the vector bundle $E$ is a map
\begin{equation*}
\nabla:\;\mathfrak{X}(X)\times \Gamma(E)\to \Gamma(E),\quad (V,\xi)\mapsto \nabla_V\xi
\end{equation*}
such that for all $f,h,u,v\in C^\infty(X,\mathbb{C})$ and $\xi,\eta\in\Gamma(E)$, there holds
\begin{equation}
\nabla_{fV+hY}(u\xi+v\eta)=V(u)f \xi+Y(v)h \eta+fu\nabla_V\xi+fv\nabla_V\eta
+hu\nabla_Y\xi+hv\nabla_Y\eta.
\end{equation}
The curvature $\mathcal{R}$ of $\nabla$ is defined by
$$
\mathcal{R}(V,Y)Z:=\nabla_V\nabla_YZ-\nabla_Y\nabla_VZ-\nabla_{[V,Y]}Z,\quad \forall\;V,Y,Z\in\mathfrak{X}(X).
$$
It follows from \cite{kambertondeur1975} that the vector bundle $E$ is foliated if and only if it admits a linear connection such that its curvature $\mathcal{R}$ satisfies
\begin{equation}
\mathcal{R}(\xi_1,\xi_2)=0,\quad \forall\;\xi_1,\xi_2\in\mathfrak{X}(\mathcal{F}),
\end{equation}
and that
the vector bundle $E$ is an $\mathcal{F}$-bundle if and only if it admits a linear connection (called basic connection) such that its curvature $\mathcal{R}$ satisfies
\begin{equation}
\mathcal{R}(\xi,\cdot)=0,\quad \forall\;\xi\in\mathfrak{X}(\mathcal{F}).
\end{equation}
In the  \v{C}ech language, a vector bundle $E$ is foliated if and only if it can be defined by
a cocycle $\{U_\kappa,g_{\kappa\kappa'},G\}$ such that the components of $g_{\kappa\kappa'}$ are basic functions, if and only if it admits a connection $\nabla$ such that the connection form $\omega$ satisfies $i_\xi\omega=0$ for each $\xi\in\mathfrak{X}(\mathcal{F})$.  A vector bundle $E$ is the $\mathcal{F}$-bundle if and only if it admits a connection $\nabla$ such that the connection form $\omega$ is a basic $1$-form.

A section $\eta\in\Gamma(E)$ is called basic if $\nabla_\xi\eta=0$ for each $\xi\in\mathfrak{X}(\mathcal{F})$, where $\nabla$ is the basic connection on the $\mathcal{F}$-bundle. Let $\Gamma(E/\mathcal{F})$ denote the set of all basic sections of $E$.

If a vector bundle $E$ is the $\mathcal{F}$-bundle, then the dual bundle $E^*$ and all of its exterior and symmetric powers $\wedge^*E$ and $\mathcal{S}^*E$ are
the $\mathcal{F}$-bundle. In particular, $E^*\otimes \bar E^*$ is also the $\mathcal{F}$-bundle. If there exists $h\in \Gamma(E^*\otimes \bar E^*)$ such that $h$ is positive and basic with respect to the basic connection, then $E$ is called a Hermitian $\mathcal{F}$-bundle.

Let $\mathcal{F}$ be a transverse holomorphic foliation on $X$ with real dimension $k$ and complex codimension $n$, and let $\nu$ be the complexified normal bundle $\nu\mathcal{F}\otimes_{\mathbb{R}}\mathbb{C}$ of $\nu\mathcal{F}$. Let $J$ be the automorphism of $\nu$ associated to the complex structure; $J$ satisfies $J^2=-\mathrm{id}$ and then
it has two eigenvalues, $\sqrt{-1}$ and $-\sqrt{-1}$, with associated eigensubbundles denoted $\nu^{1,0}$
and $\nu^{0,1}$, respectively. The holomorphic $\mathcal{F}$-line bundle $K_{X/\mathcal{F}}:=\bigwedge^n\left(\nu^{1,0}\right)^*$ is called the normal canonical line bundle. We have a splitting $\nu^*=\left(\nu^{1,0}\right)^*\oplus\left(\nu^{0,1}\right)^*$ which gives rise to a
decomposition
\begin{equation*}
\bigwedge^{r}\nu^*=\bigoplus_{p+q=r}\bigwedge^{p,q},
\end{equation*}
where $\bigwedge^{p,q}=\bigwedge^p(\nu^{1,0})^*\otimes\bigwedge^p(\nu^{0,1})^*$. Basic sections of $\bigwedge^{p,q}$ consist of basic forms of
type $(p,q)$. They form a vector space denoted by $\Omega^{p,q}(X/\mathcal{F})$. We have
\begin{equation}
\Omega^{r}(X/\mathcal{F})=\bigoplus_{p+q=r}\Omega^{p,q}(X/\mathcal{F}).
\end{equation}
The exterior differential $\mathrm{d}$ decomposes into a sum of two operators
\begin{equation}
\partial :\;\Omega^{p,q}(X/\mathcal{F})\to\Omega^{p+1,q}(X/\mathcal{F}),\quad
\bar\partial :\;\Omega^{p,q}(X/\mathcal{F})\to\Omega^{p,q+1}(X/\mathcal{F}).
\end{equation}
A basic function $f\in C^1 (X/\mathcal{F},\mathbb{C})$ is called basic holomorphic if $\bar\partial f=0$.

For the basic currents, we also have
\begin{equation}
\mathscr{D}^{r}(X/\mathcal{F})=\bigoplus_{p+q=r}\mathscr{D}^{p,q}(X/\mathcal{F}),\quad
\mathscr{D}'^{r}(X/\mathcal{F})=\bigoplus_{p+q=r}\mathscr{D}'^{p,q}(X/\mathcal{F}).
\end{equation}
The space $\mathscr{D}'^{p,q}(X/\mathcal{F})$ is called the space of basic currents of bidimension $(n-p,n-q)$ and bidegree $(p,q)$ on $X$, and it is also denoted by $\mathscr{D}'_{n-p,n-q}(X/\mathcal{F})$.
It follows from Lemma \ref{tdglemma} that the basic
Dolbeault cohomology $H^{p,q}(X/\mathcal{F})$ of the foliation $\mathcal{F}$
is given by
\begin{equation}
H^{p,q}(X/\mathcal{F},\mathbb{C}):=\frac{\{\varphi\in\Omega^{p,q}
(X/\mathcal{F}):\bar\partial \varphi=0\}}
{\bar\partial  \Omega^{p,q-1}(X/\mathcal{F})}
\simeq\frac{\{T\in\mathscr{D}'^{p,q}(X/\mathcal{F}):\bar\partial  T=0\}}{\bar\partial  \mathscr{D}'^{p,q-1}(X/\mathcal{F})}.
\end{equation}
We also introduce the basic Bott-Chern cohomology group defined by
\begin{equation}
H_{\mathrm{BC}}^{p,q}(X/\mathcal{F},\mathbb{C}):=\frac{\{\varphi\in\Omega^{p,q}
(X/\mathcal{F}):\bar\partial \varphi=0\}}
{\partial\bar\partial  \Omega^{p-1,q-1}(X/\mathcal{F})}
\simeq\frac{\{T\in\mathscr{D}'^{p,q}(X/\mathcal{F}):\bar\partial  T=0\}}{\partial\bar\partial  \mathscr{D}'^{p-1,q-1}(X/\mathcal{F})}.
\end{equation}
It follows from \cite{elk90} that both $\dim_{\mathbb{R}}H^p(X/\mathcal{F})$ and $\dim_{\mathbb{C}}H_{\mathrm{BC}}^{p,q}(X/\mathcal{F})$ are finite (See \cite{elk2014} for more details). If $(X,\mathcal{F})$ is a transverse K\"ahler foliated manifold, then $H^{p,q}(X/\mathcal{F},\mathbb{C})=H_{\mathrm{BC}}^{p,q}(X/\mathcal{F},\mathbb{C})$
by the basic $\partial\bar\partial$-lemma \cite{baragliahekmati2018}.

Let $\mathcal{F}$ be a transverse holomorphic foliation on $X$ with real dimension $k$ and complex codimension $n$. Then for a holomorphic Hermitian $\mathcal{F}$-bundle $(E,h)$ with rank $r$, the adapted Chern connection $\nabla$ on $E$ is the unique basic connection which preserves $h$ and satisfies $\nabla^{(1,0)}=\bar\partial$.
 We denote by $c(E,h)$ the curvature of the connection $\nabla,$ which is a basic $(1,1)$ form with values in $\mathrm{End}(E).$ By the Chern-Weil theory \cite{chernclass}, the Chern form $c_j(E,h)$ of the   holomorphic Hermitian  $\mathcal{F}$-bundle $(E,h)$ is defined by
\begin{equation*}
\det\left(\mathrm{Id}_E+\frac{\sqrt{-1}}{2\pi}c(E,h)\right)=1+\sum_{i\geq 1}c_i(E,h),
\end{equation*}
where $c_i(E,h)$ is a closed basic real $(i,i)$ form for $i\geq 1.$ We also call $c_1(E,h)$ the Chern-Ricci form. We say that
$$
c_i^{\mathrm{BC}}(E/\mathcal{F})=[c_i(E,h)]\in H^{i,i}_{\mathrm{BC}}(X/\mathcal{F},\mathbb{R}):=\frac{\{\mathrm{d}\text{-closed basic real $(i,i)$ forms} \}}{\sqrt{-1}\partial \bar\partial \{\text{basic real $(i-1,i-1)$ forms}\}}
$$
is the $i^{\mathrm{th}}$ basic Chern class of $E.$ In particular, the first basic Chern class $c_1(X/\mathcal{F})$ of $(X,\mathcal{F})$ is defined by $-c_1(K_{X/\mathcal{F}}/\mathcal{F})$.

Let $\{s_1,\cdots,s_r\}$ be a local basic basis of transverse holomorphic Hermitian $\mathcal{F}$-bundle $E$ in the distinguished chart $(U;t^1,\cdots,t^k,z^1,\cdots,z^n)$. Then $h=(h_{\alpha\bar\beta})$ with $h_{\alpha\bar\beta}\in C^\infty(X/\mathcal{F},\mathbb{C})$ is the transverse Hermitian metric on $E$. Let
\begin{equation}
\label{defneicomplex}
e_i=\partial_i+\sum_{P=1}^kA_i^P\frac{\partial}{\partial t^P},\quad 1\leq i\leq n
\end{equation}
be a local basic basis of $\mathcal{\nu}^{1,0}$ with dual $\{\mathrm{d}z^1,\cdots,\mathrm{d}z^n\}$. Then we use the notation
$$
\nabla_i:=\nabla_{e_i},\quad \nabla_{\bar j}:=\nabla_{\bar e_j}
$$
and
$$
\nabla_is_\alpha=\sum_{\beta}\Gamma_{i\alpha}^\beta s_\beta,\quad \mbox{with}\quad \Gamma_{i\alpha}^\beta
=\sum_{\gamma}h^{\beta\bar\gamma}\partial_ih_{\alpha\bar\gamma},\quad \mbox{and}\quad
\sum_{\gamma}h^{\alpha\bar\gamma}h_{\beta\bar\gamma}=\delta^\alpha_\beta.
$$
Note that when $\nabla$ acts on basic sections, we have
$$
\nabla_i=\nabla_{\partial_i},\quad \nabla_{\bar j}=\nabla_{\partial_{\bar j}},\quad
\nabla_{[e_i,\bar e_j]}=\nabla_{[\bar e_i,\bar e_j]}=\nabla_{[e_i,e_j]}=0.
$$
In what follows, we will always study basic sections. Hence we will use this fact directly without  explanation.

The curvature is defined by
$$
\nabla_i\nabla_{\bar j}s_\alpha-\nabla_{\bar j}\nabla_{i}s_\alpha
-\nabla_{[e_i,\bar e_j]}s_\alpha=R_{i\bar j\alpha}{}^\beta s_\beta,\quad R_{i\bar j \alpha}{}^\beta:=-\partial_{\bar j}\Gamma_{i\alpha}^\beta.
$$
We use the notation $R_{i\bar j\alpha\bar\beta}:=R_{i\bar j\alpha}{}^\gamma h_{\gamma\bar\beta}$, and have
$$
R_{i\bar j\alpha\bar \beta}:=-\partial_i\partial_{\bar j}h_{\alpha\bar\beta}+h^{\gamma\bar\delta}\left(\partial_ih_{\alpha\bar\delta}\right)
\left(\partial_{\bar j}h_{\gamma\bar\beta}\right).
$$
The Chern-Ricci form $c_1(E,h)$ is given by
\begin{equation}
2\pi c_1(E,h)=\sqrt{-1}R_{i\bar j\alpha}{}^\alpha\mathrm{d}z^i\wedge\mathrm{d}\bar z^j=-\sqrt{-1}\partial\bar\partial\log\det(h_{\alpha\bar\beta}).
\end{equation}
For the transverse Hermitian metric $\omega=\sqrt{-1}g_{i\bar j}\mathrm{d}z^i\wedge\mathrm{d}\bar z^j$ with $g_{i\bar j}\in C^\infty(X/\mathcal{F},\mathbb{C})$,
the curvature tensor $R =\{R _{i\bar jk\bar \ell}\}$ of $\omega$ is given by (see \cite{elk90})
$$
R_{i\bar jk\bar \ell}=-\partial_i\partial_{\bar j}g_{k\bar\ell}+g^{p\bar q}\left(\partial_ig_{k\bar q}\right)\left(\partial_{\bar j}g_{p\bar \ell}\right).
$$
Given $\mathbf{x}\in X$ and basic vector field $W\in \nu^{1,0}_{\mathbf{x}}\setminus\{0\}$, the \emph{transverse holomorphic sectional curvature of $\omega$ at $\mathbf{x}$ in the direction $W$} is
$$H_{\mathbf{x}}(W):=\frac{R(W,\overline W,W,\overline W)}{|W|^4_\omega}.$$

We say that $\omega$ has negative transverse holomorphic sectional curvature if
$$
H_{\mathbf{x}}(W)<0,\quad \forall\;\mathbf{x}\in X,\;\forall\;W\in \nu^{1,0}_{\mathbf{x}}\setminus\{0\}
$$
We set
$$H^\omega_x:=\sup \{H^\omega_x(W)|W\in T^{1,0}_xX\setminus\{0\}\}$$
and
$$\mu_{\omega}:=\sup_{x\in X}H^\omega_x.$$

The Chern-Ricci form $\mathrm{Ric}(\omega)$ is defined by
\begin{equation}
\label{chernricciform}
\mathrm{Ric}(\omega)=-\sqrt{-1}\partial\bar\partial\log\det(g_{i\bar j})
\end{equation}
and there holds
$$
2\pi c_1(X/\mathcal{F})=[\mathrm{Ric}(\omega)]\in H^{1,1}_{\mathrm{BC}}(X/\mathcal{F},\mathbb{R}).
$$
We also introduce the transverse ``torsion" tensor as in Hermitian geometry.
\begin{equation}
T_{ij}^k:=g^{k\bar q}\left(\partial_i g_{j\bar q}-\partial_jg_{i\bar q}\right),\quad
T_{ij\bar\ell}:=T_{ij}^k g_{k\bar\ell }.
\end{equation}
For a basic $(1,0)$ form $a=a_{\ell}\mathrm{d} z^{\ell}$, define covariant derivative $\nabla_ia_{\ell}$ by
\begin{align*}
\nabla_{i}a_{\ell}:=\partial_{i}a_{\ell}-\Gamma_{i\ell}^pa_p.
\end{align*}
Then we can deduce
\begin{align}\label{commutate}
[\nabla_{i},\nabla_{\overline{j}}]a_{\ell}=-R_{i\overline{j}\ell}{}^pa_p,\quad [\nabla_{i},\nabla_{\overline{j}}]a_{\overline{m}}=R_{i\overline{j}}{}^{\overline{q}}{}_{\overline{m}}a_{\overline{q}},
\end{align}
where $R_{i\overline{j}}{}^{\overline{q}}{}_{\overline{m}}=R_{i\overline{j}p}{}^{\ell}g^{\overline{q}p}g_{\ell \overline{m}}$.

For each basic function $u\in C^{2}(X/\mathcal{F},\mathbb{R})$, one can infer
\begin{align}\label{ricciidentityu}
\nabla_{i}u=\partial_{i}u,\quad \nabla_{\overline{j}}u=\partial_{\overline{j}}u,\quad \nabla_{\overline{j}}\nabla_{i}u=\partial_{\overline{j}}\partial_{i}u,\quad [\nabla_{i},\nabla_j]u=-T_{ij}^p\nabla_pu.
\end{align}
A direct calculation yields that (cf.\cite{twcrelle} for Hermitian manifold)
\begin{equation}
\label{ricciidentity}
\nabla_{\overline{m}}\partial_{\bar j}\partial_pu =\nabla_{\overline{j}}\partial_{\bar m}\partial_pu-\overline{T_{mj}^q}u_{p\overline{q}},
\quad \nabla_{\ell}\partial_{\bar q}\partial_iu=\nabla_{i}\partial_{\bar q}\partial_\ell u-T_{\ell i}^pu_{p\overline{q}}
\end{equation}
and
\begin{align}
\label{ricciid2times}
\nabla_{\bar \ell}\nabla_k\nabla_{\bar j}\nabla_iu
=&\nabla_{\bar j}\nabla_i\nabla_{\bar \ell}\nabla_ku
+R_{k\bar\ell i}{}^p\nabla_{\bar j}\nabla_pu
-R_{i\bar j k}{}^p\nabla_{\bar\ell}\nabla_pu\\
&-T_{ki}^p\nabla_{\bar\ell}\nabla_{\bar j}\nabla_p u
-\overline{T_{\ell j}^q}\nabla_{k}\nabla_{\bar q}\nabla_i u
+T_{ki}^p\overline{T_{\ell j}^q}\nabla_{\bar q}\nabla_p u,\quad\forall\;u\in C^\infty(X/\mathcal{F},\mathbb{R}).\nonumber
\end{align}
\begin{lem}
\label{tlem1-3.30}
Let $\mathcal{F}$ be a transverse holomorphic foliation on $X$ with real dimension $k$ and complex codimension $n$.
If $f\in \mathscr{D}'^{p,0}(X/\mathcal{F})$ satisfies that $\bar\partial f\in\mathscr{E}^{p,1}(X/\mathcal{F}),$ then $f\in\mathscr{E}^{p,0}(X/\mathcal{F})$.
\end{lem}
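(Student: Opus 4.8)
The plan is to reduce the statement to a purely local elliptic-regularity question on the transverse model and then invoke hypoellipticity of the Laplacian. Since smoothness of a form or current is a local property, it suffices to work in one distinguished chart $(U;t^1,\dots,t^k,z^1,\dots,z^n)$ at a time. Because $f$ is basic, its coefficients depend only on the transverse holomorphic coordinates $z=(z^1,\dots,z^n)$, so $f$ descends to an ordinary $(p,0)$-current on an open set $V\subset\mathbb{C}^n$, and the assertion becomes the classical fact: a $(p,0)$-current on $V\subset\mathbb{C}^n$ whose $\bar\partial$ is smooth is itself smooth.

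First I would write $f=\sum_{|I|=p} f_I\,dz^I$ in such a chart, with each coefficient $f_I$ a distribution in $z$ that is annihilated by $\partial/\partial t^\alpha$ (this is exactly the basic condition recalled above for currents). Since $\bar\partial$ only introduces $d\bar z^j$-factors, one has $\bar\partial f=\sum_{|I|=p}\sum_{j=1}^n(\partial_{\bar j}f_I)\,d\bar z^j\wedge dz^I$. As the $d\bar z^j\wedge dz^I$ are linearly independent over the smooth coframe, the hypothesis $\bar\partial f\in\mathscr{E}^{p,1}(X/\mathcal{F})$ is equivalent to the statement that every $\partial_{\bar j}f_I$ is a smooth function. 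The goal is then to upgrade this to smoothness of each $f_I$ itself.

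The decisive step is elliptic regularity. Setting $L:=\sum_{j=1}^n\partial_j\partial_{\bar j}$, which (up to a positive constant) is the Laplacian on $\mathbb{C}^n$ and is therefore elliptic, one computes $Lf_I=\sum_{j=1}^n\partial_j(\partial_{\bar j}f_I)$, whose right-hand side is smooth by the previous step. Since an elliptic operator is hypoelliptic, each $f_I$ is smooth. This yields smoothness of $f$ as a transverse form on every distinguished chart, and the basic (leafwise-constant) structure, together with a partition of unity subordinate to a foliated atlas, promotes this to the global conclusion $f\in\mathscr{E}^{p,0}(X/\mathcal{F})$.

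The analytic core is completely standard, so I expect the \emph{main obstacle} to be the foliated bookkeeping rather than the regularity theory: one must verify that a basic current genuinely descends to a well-defined current on the local transverse manifold, that the transverse operator $\bar\partial$ is compatible with the chart transition maps $\gamma_{ij}$ (which are biholomorphisms of the transverse coordinates), and that transverse smoothness of the coefficients, combined with their independence of the leaf variables, really produces a smooth \emph{basic} form on $X$ and not merely a smooth transverse object. These compatibilities rest on the structure theory of basic forms and currents recalled in this section, and are precisely what make the otherwise-classical argument valid in the foliated category.
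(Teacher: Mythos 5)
Your proposal is correct and follows essentially the same route as the paper: localize to a distinguished chart, use the basic condition to see that the coefficients are pulled back from the transverse factor $U\subset\mathbb{C}^n$ (the paper makes this precise as $f=1\otimes\tilde f$ via Proposition \ref{ramondexercise4.2.9}), and then invoke the classical regularity of $\bar\partial$ on $(p,0)$-currents. The only difference is cosmetic: the paper cites \cite[Corollary I-3.30]{demaillybook1} for that last step, whereas you reprove it by observing that $\sum_j\partial_j\partial_{\bar j}f_I$ is smooth and appealing to hypoellipticity of the Laplacian, which is the standard proof of that very corollary.
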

\begin{proof}
The result is local, and hence we may assume that $X=V\times U\subset\mathbb{R}^k\times \mathbb{C}^n$ is a distinguish patch. Then
one infers from Proposition \ref{ramondexercise4.2.9} that $f=1\otimes \tilde f$ where $\tilde f\in \mathscr{D}'^{p,0}(U)$ satisfies $\bar\partial \tilde f\in \mathscr{E}^{p,1}(U)$, and hence it follows from \cite[Corollary
\uppercase\expandafter{\romannumeral1}-3.30]{demaillybook1} that $\tilde f\in \mathscr{E}^{p,0}(U)$, which means $f=1\otimes \tilde f\in \mathscr{E}^{p,0}((V\times U)/\mathcal{F}_{\upharpoonright V\times U}),$ as desired.
\end{proof}
Let $\nu$ be a Hermitian $\mathcal{F}$-bundle and $\omega$ denote the basic Hermitian metric on $\nu$. Then $\omega^n\wedge\chi$ is the canonical orientation on $X$ and $\omega^n$ is the canonical transverse orientation. In the distinguished chart $(U;t^1,\cdots,t^k,z^1,\cdots,z^n)$, each $u\in \Gamma(\bigwedge^{p,q})$ can be written as
 $$
u=\frac{1}{p!q!}u_{i_1,\cdots,i_p,\bar j_1,\cdots,\bar j_q}\mathrm{d}z^{i_1}\wedge\cdots\wedge\mathrm{d}z^{i_p}\wedge\mathrm{d}\bar z^{j_1}\wedge\cdots\wedge\mathrm{d}\bar z^{j_q},
 $$
 where $u_{i_1,\cdots,i_p,\bar j_1,\cdots,\bar j_q}=u_{i_1,\cdots,i_p,\bar j_1,\cdots,\bar j_q}(t,z)$. 
 
However, given Lemma \ref{lembasicfunctionwithcompactsupport0}, we should introduce the notion of positivity similar to the one in \cite{lelong57} carefully (c.f. \cite[Remark 1.15 of Section 1 of Chapter 3]{demaillybook1} and \cite{vancoevering2016}).
 
A form $u\in \Gamma(\bigwedge^{p,p})$ is called positive (resp. strictly positive) if on  each distinguished chart $(U;t^1,\cdots,t^k,z^1,\cdots,z^n)$
$$
\frac{u_{\upharpoonright U}\wedge\sqrt{-1}\alpha_1\wedge\overline{\alpha}_1
\wedge\cdots\wedge
\sqrt{-1}\alpha_{n-p}\wedge\overline{\alpha}_{n-p}
\wedge\chi}{\left(\omega^n\wedge\chi\right)_{\upharpoonright U}}\geq 0\;(\mbox{resp.}\;>0)
$$
on $U$ for each non-zero basic $\alpha_j\in \Gamma(\nu^{1,0}_{\upharpoonright U})$ with constant coefficients under the coordinate $(z^1,\cdots,z^n)$ and $1\leq j\leq n-p$.

In order to introduce the notion of positivity of basic $(p,p)$ current, let us recall the notion of invariant transverse measure.
Let $\mathcal{F}$ be a transverse holomorphic foliation on $X$ with real dimension $k$ and complex codimension $n$. Then an invariant transverse measure $\mu$ is a measure on the local leaf space in each chart which is preserved by transition
functions. We can identify the invariant transverse measure $\mu$ with the basic $(n,n)$ current $\mathcal{C}_\mu$  given by
$$
\mathcal{C}_\mu:=(\sqrt{-1})^{n}\sum \left(1\otimes \mu\right)\mathrm{d}z^1\wedge\mathrm{d}\bar z^1\wedge\cdots\wedge\mathrm{d}z^n\wedge\mathrm{d}\bar z^n
$$
in the distinguished chart $(t^1,\cdots,t^k,z^1,\cdots,z^n)$. In what follows, we will not distinguish $\mu,\;\mathcal{C}_u$ and $1\otimes \mu$.

A basic $(p,p)$ current $T$ is called positive if on  each distinguished chart $(U;t^1,\cdots,t^k,z^1,\cdots,z^n)$
$$
T_{\upharpoonright U}\wedge \sqrt{-1}\alpha_1\wedge\bar\alpha_1\wedge\cdots\wedge\sqrt{-1}
\alpha_{n-p}\wedge\bar\alpha_{n-p}
$$
is a  positive invariant transverse measure on $U$  for each non-zero basic $\alpha_j\in \Gamma(\nu^{1,0}_{\upharpoonright U})$ with constant coefficients in the given coordinates $(z^1,\cdots,z^n)$ and $1\leq j\leq n-p$.  The set of positive basic currents of bi-dimension $(n-p, n-p)$ will
be denoted by
$$
\mathscr{D}'^{+}_{n-p,n-p}(X/\mathcal{F}).
$$
In the distinguished chart $(U;t^1,\cdots,t^k,z^1,\cdots,z^k)$,
it follows from \cite[Lemma III-1.4]{demaillybook1} that $\bigwedge^{p,p}$ admits a basis consisting of
\label{lem3-1.4}
$$
\alpha_s=\sqrt{-1}\alpha_{s,1}\wedge\overline{\alpha}_{s,1}\wedge\cdots\wedge \sqrt{-1}\alpha_{s,p}\wedge\overline{\alpha}_{s,p},\quad 1\leq s\leq \binom{n}{p} ^2
$$
where $\alpha_s$ is of the type $\mathrm{d} z^j\pm\mathrm{d} z^k$ or $\mathrm{d} z^j\pm\sqrt{-1} \mathrm{d} z^k,\,1\leq j,k\leq n$. This, together with the argument in \cite[Proposition III-1.14]{demaillybook1}, yields that
 each positive basic current $$T =\frac{(\sqrt{-1})^{p^2} }{p!p!}T_{i_1,\cdots,i_p,\bar j_1,\cdots,\bar j_p}\mathrm{d}z^{i_1}\wedge\cdots\wedge\mathrm{d}z^{i_p}\wedge\mathrm{d}\bar z^{j_1}\wedge\cdots\wedge\mathrm{d}\bar z^{j_p}\in \mathscr{D}'^{+}_{n-p,n-p}(X/\mathcal{F})$$
 is real and of order $0$, i.e., its coefficients $T_{I,J}$ are invariant transverse complex measures and satisfy $$\overline{T_{i_1,\cdots,i_p,\bar j_1,\cdots,\bar j_p}}=T_{j_1,\cdots,j_p,\bar i_1,\cdots,\bar i_p}.$$
Moreover  $T_{i_1,\cdots,i_p,\bar i_1,\cdots,\bar i_p}$ is positive  invariant transverse measure, and the total variation  $|T_{i_1,\cdots,i_p,\bar j_1,\cdots,\bar j_p}|$
of the measures $T_{i_1,\cdots,i_p,\bar j_1,\cdots,\bar j_p}$ satisfy the inequality
\begin{align*}
\lambda_{i_1}\cdots\lambda_{i_p}\lambda_{j_1}\cdots,\lambda_{j_p}|T_{i_1,\cdots,i_p,\bar j_1,\cdots,\bar j_p}|\leq 2^p\sum_{m_1,\cdots,m_p}(\lambda_{m_1}\cdots\lambda_{m_p})^2T_{m_1,\cdots,m_p,\bar m_1,\cdots,\bar m_p},
\end{align*}
where $\lambda_k\geq0$ and
 $$
 \{i_1,\cdots,i_p\}\cap \{j_1,\cdots,j_p\}
 \subset\{m_1,\cdots,m_p\}\subset \{i_1,\cdots,i_p\}\cup \{j_1,\cdots,j_p\}.
 $$
 We denote by ${}^s\mathscr{D}^{p,q}(X,\mathcal{F})$ the set of $(k+p+q)$-forms given locally by
 $$
 \frac{1}{p!q!}\varphi_{i_1,\cdots,i_p,\bar j_1,\cdots,\bar j_q}\theta^1\wedge\cdots\wedge\theta^k\wedge\mathrm{d}z^{i_1}\wedge\cdots\wedge\mathrm{d}z^{i_p}\wedge\mathrm{d}\bar z^{j_1}\wedge\cdots\wedge\mathrm{d}\bar z^{j_q}
 $$
 with $\varphi_{i_1,\cdots,i_p,\bar j_1,\cdots,\bar j_q}=\varphi_{i_1,\cdots,i_p,\bar j_1,\cdots,\bar j_q}(t,z)\in C^s(U,\mathbb{R})$ and $\{\theta^1,\cdots,\theta^k\}$ defined similarly  by \eqref{defnthetaalpha}.

The dual of ${}^s\mathscr{D}^{p,q}(X,\mathcal{F})$ is denoted by ${}^s\mathscr{D}'^{n-p,n-q}(X,\mathcal{F})$ or ${}^s\mathscr{D}'_{p,q}(X,\mathcal{F})$.
 With these notations, we get that $\mathscr{D}'^{+}_{n-p,n-p}(X/\mathcal{F})$ is weakly-* closed cone in  ${}^0\mathscr{D}'_{p,p}(X,\mathcal{F})$ (see \cite{craioveanuputa1980,derham55}). Note that ${}^0\mathscr{D}^{p,p}(X,\mathcal{F})$ is a separable topological vector space.
\begin{lem}
\label{tprop3-1.23}
Let $\mathcal{F}$ be a transverse holomorphic foliation on $X$ with real dimension $k$ and complex codimension $n$, and let $\delta$ be a positive continuous function on $X$. Then the set given by
$$
\mathscr{T}:=\left\{T\in \mathscr{D}'^+_{p,p}(X/\mathcal{F}):\;\int_X\delta T\wedge\omega^p\wedge\chi\leq 1\right\}
$$
is weakly-* compact and weakly-* sequently compact.
\end{lem}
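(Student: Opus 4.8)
The plan is to realize $\mathscr{T}$ as a weakly-* closed and uniformly mass-bounded subset of the dual of a separable normed space, and then to invoke the Banach--Alaoglu theorem. Recall from the discussion preceding the statement that $\mathscr{D}'^{+}_{p,p}(X/\mathcal{F})$ is a weakly-* closed cone inside ${}^0\mathscr{D}'_{p,p}(X,\mathcal{F})$, which is the dual of the separable space ${}^0\mathscr{D}^{p,p}(X,\mathcal{F})$ (equipped with the $C^0$-norm), and that every positive basic current is of order $0$ with invariant transverse measure coefficients. The only genuine work is to show that the normalizing condition $\int_X\delta\,T\wedge\omega^p\wedge\chi\le 1$ forces a uniform bound on the total variations of the coefficient measures $T_{I,J}$.

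First I would produce the mass bound. Since $X$ is closed and $\delta$ is continuous and strictly positive, there is $\delta_0>0$ with $\delta\ge\delta_0$ on $X$. For $T\in\mathscr{D}'^{+}_{p,p}(X/\mathcal{F})$ the current $T\wedge\omega^p$ is a positive invariant transverse measure, so $T\wedge\omega^p\wedge\chi$ is a nonnegative measure on $X$, and
$$\delta_0\int_X T\wedge\omega^p\wedge\chi\le\int_X\delta\,T\wedge\omega^p\wedge\chi\le 1,$$
whence $\int_X T\wedge\omega^p\wedge\chi\le\delta_0^{-1}$ for every $T\in\mathscr{T}$. Working in a finite atlas of distinguished charts with a subordinate partition of unity and a local $\omega$-orthonormal coframe, the density of $T\wedge\omega^p\wedge\chi$ is comparable to the sum of the trace measures $T_{m_1,\cdots,m_p,\bar m_1,\cdots,\bar m_p}$; since the passage between the coordinate coframe and an $\omega$-orthonormal one is governed by the eigenvalues of $\omega$, which are bounded above and below on the compact $X$, these trace measures have uniformly bounded total mass. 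Feeding this into the coefficient inequality recorded just before the statement (with the weights $\lambda_k\equiv 1$) bounds the total variation of every coefficient measure $T_{I,J}$ by a constant independent of $T$. Hence $\mathscr{T}$ is uniformly bounded in mass, i.e. contained in a fixed ball of ${}^0\mathscr{D}'_{p,p}(X,\mathcal{F})$.

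Next I would verify that $\mathscr{T}$ is weakly-* closed. The cone $\mathscr{D}'^{+}_{p,p}(X/\mathcal{F})$ is already weakly-* closed, as noted above. Since $\delta\,\omega^p$ is a fixed continuous basic test form, the functional $T\mapsto\int_X\delta\,T\wedge\omega^p\wedge\chi$ is weakly-* continuous on currents of order $0$, so the sublevel set $\{T:\int_X\delta\,T\wedge\omega^p\wedge\chi\le 1\}$ is weakly-* closed. Being the intersection of two weakly-* closed sets, $\mathscr{T}$ is weakly-* closed.

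Finally, the conclusion follows from Banach--Alaoglu: a weakly-* closed, norm-bounded subset of the dual of the separable normed space ${}^0\mathscr{D}^{p,p}(X,\mathcal{F})$ is weakly-* compact, and, because the weak-* topology is metrizable on bounded subsets of the dual of a separable space, it is weakly-* sequentially compact as well. The main obstacle is the uniform mass bound of the second paragraph: one must transfer the single global estimate against $\omega^p\wedge\chi$ into simultaneous control of the total variation of all coefficient measures, and this is precisely where the coefficient inequality preceding the statement, together with the compactness of $X$ (used to bound $\delta$ and the eigenvalues of $\omega$ away from $0$ and $\infty$), is essential.
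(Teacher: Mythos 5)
Your proposal is correct and follows essentially the same route as the paper: both arguments realize $\mathscr{T}$ as a weakly-* closed subset of a weakly-* compact set in ${}^0\mathscr{D}'_{p,p}(X,\mathcal{F})$ and invoke the Banach--Alaoglu theorem together with separability of ${}^0\mathscr{D}^{p,p}(X,\mathcal{F})$ for sequential compactness. The paper packages the uniform bound as containment in the polar of the neighborhood $V=\{u:\,-\delta\,\omega^p\wedge\chi<u<\delta\,\omega^p\wedge\chi\}$, whereas you make the same bound explicit via the coefficient inequality; the content is identical.
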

\begin{proof}
This is a transverse version of \cite[Proposition III-1.23]{demaillybook1}. Since
$$
V:=\{u\in {}^0\mathscr{D}^{p,p}(X,\mathcal{F}):\;-\delta \omega^p\wedge\chi<u<\delta\omega^p\wedge\chi\}
$$
is an open neighborhood of $0$ in the separable topological vector space ${}^0\mathscr{D}^{p,p}(X,\mathcal{F})$ (actually we can check this directly by using the topology of ${}^0\mathscr{D}^{p,p}(X,\mathcal{F})$), we get that
$$
\tilde{\mathscr{T}}:=\left\{T\in {}^0\mathscr{D}'_{p,p}(X,\mathcal{F}):\;|\langle T,u\rangle|\leq 1,\quad \forall\;u\in V \right\}
$$
is weakly-* compact and weakly-* sequently compact
from
\cite[Theorem 3.64, Theorem 3.67 \& Corollary 3.68]{kupfermantvs}.
Hence Lemma \ref{tprop3-1.23} follows from the fact that $\mathscr{T}$ is a weakly-* closed subset contained in $\tilde{\mathscr{T}}$.
\end{proof}

\subsection{Notions of positivity}
Let $[\alpha]\in H_{\mathrm{BC}}^{1,1}(X/\mathcal{F},\mathbb{R})$, where $\alpha$ is a smooth closed real $(1,1)$ basic form. We define the following positivity notions which are easily seen to be independent of the choice of $\omega$ and hence without loss of generality we assume that $\omega$ is the basic Gauduchon metric (i.e., $\omega$ is a positive $(1,1)$ basic form such that $\partial  \bar\partial  \omega^{n-1}=0$). It follows from \cite{baragliahekmati2018} that there exists a unique basic Gauduchon metric up to scaling (when $n\geq 2$) in  the conformal class of each Hermitian metric.
\begin{itemize}
\item $[\alpha]$ is {\bf transverse K\"ahler} if it contains a representative which is a basic K\"ahler form, i.e., if there is a smooth basic function $\varphi$ such that $\alpha+\sqrt{-1}\partial\overline{\partial}\varphi\geq\varepsilon\omega$ on $X$, for some $\varepsilon>0$.
\item $[\alpha]$ is {\bf transverse nef} if for every $\varepsilon>0$ there is a smooth basic function $\varphi_\varepsilon$ such that
\begin{equation}
\label{defntnef}
\alpha+\sqrt{-1}\partial\overline{\partial}\varphi_\varepsilon\geq-\varepsilon\omega
\end{equation}
holds on $X$.
\item $[\alpha]$ is {\bf transverse big} if it contains a basic K\"ahler current $T$, i.e., there exists a closed basic current $T\in[\alpha]$ such that $T\geq\varepsilon\omega$ holds weakly as currents on $X$  for some $\varepsilon>0$.
\item $[\alpha]$ is {\bf transverse pseudoeffective} if it contains a closed positive basic current.
\end{itemize}
The set of all the transverse K\"ahler classes (resp. transverse pseudoeffective classes, transverse nef classes, transverse big classes) is denoted by   $\mathcal{K}_{X/\mathcal{F}}$ (resp. $\mathcal{E}_{X/\mathcal{F}}$, $\mathcal{N}_{X/\mathcal{F}}$,  $\mathcal{B}_{X/\mathcal{F}}$).
\begin{lem}
Let $\mathcal{F}$ be a transverse holomorphic foliation on $X$ with real dimension $k$ and complex codimension $n$. Then both $\mathcal{E}_{X/\mathcal{F}}$ and $\mathcal{N}_{X/\mathcal{F}}$ are closed cone. Moreover, there holds that
$\mathcal{N}_{X/\mathcal{F}}\subset\mathcal{E}_{X/\mathcal{F}}$ and
$\mathcal{E}_{X/\mathcal{F}}\cap(-\mathcal{E}_{X/\mathcal{F}})=\{0\}$.
\end{lem}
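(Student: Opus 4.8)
The plan is to fix throughout the basic Gauduchon metric $\omega$ (so $\partial\bar\partial\omega^{n-1}=0$) and to record one computation used repeatedly: for a closed basic real current $S$ of bidegree $(1,1)$ the \emph{mass} $\mathbf{M}(S):=\int_X S\wedge\omega^{n-1}\wedge\chi$ depends only on $[S]\in H^{1,1}_{\mathrm{BC}}(X/\mathcal{F},\mathbb{R})$; indeed, if $S'=S+\sqrt{-1}\partial\bar\partial\psi$ then $\mathbf{M}(S')-\mathbf{M}(S)=\int_X\sqrt{-1}\partial\bar\partial\psi\wedge\omega^{n-1}\wedge\chi$, which vanishes after transferring $\partial\bar\partial$ onto $\omega^{n-1}$ by the basic Stokes theorem (Lemma \ref{basicstokes}) and using $\partial\bar\partial\omega^{n-1}=0$. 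With this in hand the cone property is soft: a positive combination of closed positive basic currents is again closed and positive, so $\mathcal{E}_{X/\mathcal{F}}$ is a convex cone, while if $\alpha_i+\sqrt{-1}\partial\bar\partial\varphi_i\geq-\varepsilon\omega$ then $\sum_i\lambda_i\alpha_i+\sqrt{-1}\partial\bar\partial(\sum_i\lambda_i\varphi_i)\geq-(\sum_i\lambda_i)\varepsilon\omega$ for $\lambda_i>0$, which after rescaling $\varepsilon$ shows $\mathcal{N}_{X/\mathcal{F}}$ is a convex cone.

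For the closedness of $\mathcal{E}_{X/\mathcal{F}}$, let $[\alpha_j]\to[\alpha]$ in the finite-dimensional space $H^{1,1}_{\mathrm{BC}}(X/\mathcal{F},\mathbb{R})$, each $[\alpha_j]$ carried by a closed positive basic current $T_j$. The mass identity gives $\mathbf{M}(T_j)=\int_X\alpha_j\wedge\omega^{n-1}\wedge\chi$, a convergent and hence bounded sequence, so taking $\delta\equiv1$ in Lemma \ref{tprop3-1.23} extracts a subsequence with $T_j\to T$ weakly-$*$. The limit $T$ is positive because the positive cone is weakly-$*$ closed, and closed because $\langle dT,\phi\rangle=\lim\langle dT_j,\phi\rangle=0$ for basic test forms $\phi$. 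The remaining and only substantial point is $[T]=[\alpha]$: for every $d$-closed basic $(n-1,n-1)$-form $\eta$ the pairing $\langle S,\eta\rangle:=\int_X S\wedge\eta\wedge\chi$ factors through $[S]\in H^{1,1}_{\mathrm{BC}}$ (again by basic Stokes, since $\eta$ closed forces $\partial\bar\partial\eta=0$) and through the basic Aeppli class of $\eta$, so $\langle T,\eta\rangle=\lim\langle T_j,\eta\rangle$ equals the value of $[\alpha]$ on $[\eta]$; as these forms separate basic Bott--Chern classes via the (finite-dimensional) transverse Bott--Chern--Aeppli duality, one concludes $[T]=[\alpha]\in\mathcal{E}_{X/\mathcal{F}}$.

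The closedness of $\mathcal{N}_{X/\mathcal{F}}$ I would prove with smooth representatives: fix closed basic real $(1,1)$-forms $\gamma_1,\dots,\gamma_m$ whose classes span $H^{1,1}_{\mathrm{BC}}$, so that $[\alpha_j]\to[\alpha]$ yields representatives $\alpha_j'=\sum_i a_i^{(j)}\gamma_i\to\alpha'=\sum_i a_i\gamma_i$ in $C^0$. Given $\varepsilon>0$, choose $j$ large with $\alpha'-\alpha_j'\geq-\tfrac{\varepsilon}{2}\omega$, then $\varphi$ with $\alpha_j'+\sqrt{-1}\partial\bar\partial\varphi\geq-\tfrac{\varepsilon}{2}\omega$ (nefness being a property of the class); adding gives $\alpha'+\sqrt{-1}\partial\bar\partial\varphi\geq-\varepsilon\omega$, so $[\alpha]$ is nef. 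For $\mathcal{N}_{X/\mathcal{F}}\subset\mathcal{E}_{X/\mathcal{F}}$, take $\varepsilon_j\to0$ and closed smooth representatives $S_j:=\alpha+\sqrt{-1}\partial\bar\partial\varphi_{\varepsilon_j}\in[\alpha]$ with $S_j+\varepsilon_j\omega\geq0$; these positive currents have bounded mass $\mathbf{M}(S_j)+\varepsilon_j\int_X\omega^n\wedge\chi$, so by Lemma \ref{tprop3-1.23} a subsequence of $S_j+\varepsilon_j\omega$ converges weakly-$*$ to a positive current $T$. Since $\varepsilon_j\omega\to0$, also $S_j\to T$ weakly-$*$, whence $T$ is closed ($d(S_j+\varepsilon_j\omega)=\varepsilon_j\,d\omega\to0$) and $[T]=[\alpha]$ by the pairing argument above, giving $[\alpha]\in\mathcal{E}_{X/\mathcal{F}}$.

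Finally, for $\mathcal{E}_{X/\mathcal{F}}\cap(-\mathcal{E}_{X/\mathcal{F}})=\{0\}$, suppose $[\alpha]$ and $-[\alpha]$ are carried by closed positive currents $T_+$ and $T_-$. Then $T_++T_-$ is closed, positive and has zero Bott--Chern class, so writing $T_++T_-=\sqrt{-1}\partial\bar\partial\psi$ the mass identity gives $\int_X(T_++T_-)\wedge\omega^{n-1}\wedge\chi=0$; as $T_\pm\geq0$ and $\omega>0$ make the integrand a positive measure of total mass zero, we get $T_++T_-=0$, hence $T_+=T_-=0$ and $[\alpha]=[T_+]=0$. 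The only step that rises above bookkeeping is the identification $[T]=[\alpha]$ of the weak limit, i.e. the weak-$*$ continuity of the basic Bott--Chern class map; this is where I expect the main obstacle to lie, and I would resolve it through the finite-dimensionality of the basic cohomologies (cf. \cite{elk90}) together with the transverse Bott--Chern--Aeppli duality, rather than through any transverse K\"ahler hypothesis.
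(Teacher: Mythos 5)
Your proposal is correct and, for the two closedness statements and for $\mathcal{N}_{X/\mathcal{F}}\subset\mathcal{E}_{X/\mathcal{F}}$, it runs along essentially the same lines as the paper: fix a basic Gauduchon metric, use the basic Stokes theorem to see that the mass $\int_X T\wedge\omega^{n-1}\wedge\chi$ only depends on the Bott--Chern class, extract weak-$*$ limits via Lemma \ref{tprop3-1.23}, and handle nefness with smooth representatives in a fixed basis of $H^{1,1}_{\mathrm{BC}}(X/\mathcal{F},\mathbb{R})$. Two points of genuine difference are worth recording. First, at the step $[T]=[\alpha]$ for the weak limit, the paper simply appeals to ``uniqueness of the weak limit,'' whereas you correctly isolate this as the delicate point and propose to settle it by pairing against $d$-closed basic $(n-1,n-1)$-forms together with a transverse Bott--Chern--Aeppli duality; that duality is not established in the paper (only finite-dimensionality of the basic cohomologies is quoted from El Kacimi-Alaoui), so if you use it you should either prove it via the transversely elliptic machinery or find a workaround, but your version is at worst no less rigorous than the paper's. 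Second, for $\mathcal{E}_{X/\mathcal{F}}\cap(-\mathcal{E}_{X/\mathcal{F}})=\{0\}$ you argue differently: you write $T_++T_-=\sqrt{-1}\partial\bar\partial\psi$, kill its mass by the Gauduchon identity, and conclude $T_\pm=0$ from the vanishing of the trace measure (using the domination of the off-diagonal total variations by the diagonal ones that the paper records). The paper instead views $\psi$ as a basic plurisubharmonic function and invokes the maximum principle to make it constant. Your route avoids the regularity upgrade from basic distribution to plurisubharmonic function entirely and relies only on measure-theoretic positivity, which I would regard as cleaner; the paper's route buys nothing extra here. Also note that, like the paper's own proof, your argument implicitly uses tautness (through $\chi$ and the basic Stokes theorem) even though the lemma as stated only assumes a transverse holomorphic foliation.
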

\begin{proof}
We just check the closeness by adapting the idea of \cite[Proposition 6.1]{dem92b}. Without out loss of generality, we assume that $\omega$ is a transverse Gauduchon metric. For the closeness of  $\mathcal{E}_{X/\mathcal{F}}$, we assume that $\{T_i\}_{i=1}^\infty$ is a sequence of closed positive basic currents such that
$$
[T_i]\to [\alpha],\quad \mbox{as}\quad i\to \infty.
$$
Then it follows from \eqref{bst} that
\begin{equation*}
\int_MT_i\wedge\omega^{n-1}\wedge\chi=\int_M [T_i]\wedge \omega^{n-1}\wedge\chi
\to \int_M[\alpha]\wedge\omega^{n-1}\wedge\chi,\quad \mbox{as}\quad i\to \infty.
\end{equation*}
This, together with Lemma \ref{tprop3-1.23}, yields that there exists a subsequence $\{T_{i_j}\}_{j=1}^\infty$ converging to a closed positive basic current $T$ weakly. Hence we can deduce $[T_{i_j}]\to [T]$ as $j\to \infty$ and that $[T]=[\alpha]$ by the uniqueness of weak limit is transverse pseudoeffective, as required.

For the closeness of  $\mathcal{N}_{X/\mathcal{F}}$, we assume that $\{\alpha_i\}_{i=1}^\infty$ is a sequence of closed smooth real basic $(1,1)$ form such that
$$
[\alpha_i]\to [\alpha],\quad \mbox{as}\quad i\to \infty.
$$
We select a sequence of smooth representatives $\gamma_j\in[\alpha]-[\alpha_j]$ which converges to $0$ in $C^\infty(X/\mathcal{F},\mathbb{R})$. Fix $\varepsilon>0$. If $[\alpha_j]$ is transverse nef, then there exists a smooth representative $\alpha_{j,\varepsilon}$ such that $\alpha_{j,\varepsilon}>-\frac{\varepsilon}{2} \omega$. On the other hand, there exists a $k_0\in \mathbb{N}$ such that $\gamma_j> -\frac{\varepsilon}{2}\omega$ with $j\geq k_0$. Thus, one can infer that $[\alpha]=[\alpha_j]+[\gamma_j]$ contains a representative $\alpha_{j,\varepsilon}+\gamma_j$
with
$$
\alpha_{j,\varepsilon}+\gamma_j>-\varepsilon\omega,\quad \forall\; j\geq k_0.
$$
This yields that $[\alpha]$ is transverse nef and hence that $\mathcal{N}_{X/\mathcal{F}}$ is closed.

Let $[\alpha]$ is a transverse nef class. Then for each $\varepsilon>0$, there exists a smooth basic function $\varphi_\varepsilon\in C^\infty(X/\mathcal{F},\mathbb{R})$ such that
$$
\alpha+\sqrt{-1}\partial\bar\partial\varphi_{\varepsilon}>-\varepsilon\omega.
$$
This yields that
\begin{equation*}
-\varepsilon\int_X\omega^n\wedge\chi\leq \int_X(\alpha+\sqrt{-1}\partial\bar\partial\varphi_\varepsilon )\wedge \omega^{n-1}\wedge\chi=\int_X \alpha \wedge\omega^{n-1}\wedge\chi.
\end{equation*}
This, together with Lemma \ref{tprop3-1.23}, yields that we can assume that without loss of generality $\lim_{\varepsilon\to 0}(\alpha+\sqrt{-1}\partial\bar\partial\varphi_\varepsilon)=T$ in the current sense, where $T$ is a closed positive basic current. Hence $[\alpha]=[T]$  is transverse pseudoeffective.

If $T$ is a closed positive basic current on $X$ and the class $-[T]$ is also transverse pseudoeffective, then there is a closed positive basic current $\tilde{T}=-T+\sqrt{-1}\partial\bar\partial\varphi$ for some basic distribution $\varphi$, and so $\sqrt{-1}\partial\bar\partial\varphi=T+\tilde{T}>0$  in the current sense. It follows from \cite[Proposition 1.43]{bookgz17} and Proposition \ref{ramondexercise4.2.9} that we can see $\varphi$ as a basic plurisubharmonic function and hence $\varphi$ is a constant by the maximum principle. This yields that both $T$ and $\tilde{T}=-T$ are positive in the current sense and hence $T=0$, as desired.
\end{proof}
\begin{lem}
Let $\mathcal{F}$ be a transverse K\"ahler foliation on $X$ with real dimension $k$ and complex codimension $n$. Then the the transverse K\"ahler cone $\mathcal{K}_{X/\mathcal{F}}$ is an open and convex cone inside $H^{1,1}(X/\mathcal{F},\mathbb{R})$. Furthermore, there holds that $\mathcal{K}_{X/\mathcal{F}}\cap(-\mathcal{K}_{X/\mathcal{F}})=\{0\}$.
\end{lem}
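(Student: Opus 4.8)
The plan is to establish, directly from the definition of a transverse K\"ahler class, the four properties: that $\mathcal K_{X/\mathcal F}$ is stable under positive scaling (a cone), convex, open, and salient in the sense $\mathcal K_{X/\mathcal F}\cap(-\mathcal K_{X/\mathcal F})=\{0\}$. Throughout I fix a transverse K\"ahler metric $\omega$, which exists since $\mathcal F$ is transverse K\"ahler and which satisfies $\md\omega=0$, so in particular $\partial\bar\partial\omega^{n-1}=0$. I also use that $H^{1,1}(X/\mathcal F,\mathbb R)=H^{1,1}_{\mathrm{BC}}(X/\mathcal F,\mathbb R)$ is finite-dimensional, by \cite{elk90} together with the basic $\partial\bar\partial$-lemma \cite{baragliahekmati2018}. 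The cone and convexity assertions are formal; openness is the step that really uses the geometry, and salience will follow from the basic Stokes' theorem.

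The cone and convexity properties follow by adding potentials. If $\alpha+\sqrt{-1}\partial\bar\partial\varphi\geq\varepsilon\omega$ and $t>0$, then $t\alpha+\sqrt{-1}\partial\bar\partial(t\varphi)\geq t\varepsilon\omega$, so $t[\alpha]\in\mathcal K_{X/\mathcal F}$. If $[\alpha_1],[\alpha_2]\in\mathcal K_{X/\mathcal F}$ with basic potentials $\varphi_1,\varphi_2$ and constants $\varepsilon_1,\varepsilon_2>0$, then for $s\in(0,1)$ the form $s(\alpha_1+\sqrt{-1}\partial\bar\partial\varphi_1)+(1-s)(\alpha_2+\sqrt{-1}\partial\bar\partial\varphi_2)$ represents $s[\alpha_1]+(1-s)[\alpha_2]$ and is bounded below by $(s\varepsilon_1+(1-s)\varepsilon_2)\omega>0$; hence $\mathcal K_{X/\mathcal F}$ is convex.

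For openness, fix $[\alpha]\in\mathcal K_{X/\mathcal F}$ with $\alpha+\sqrt{-1}\partial\bar\partial\varphi\geq\varepsilon\omega$ and choose smooth closed real basic $(1,1)$ forms $\beta_1,\dots,\beta_N$ whose classes form a basis of the finite-dimensional space $H^{1,1}(X/\mathcal F,\mathbb R)$. Since $X$ is compact, there is $M>0$ with $-M\omega\leq\beta_j\leq M\omega$ pointwise for every $j$. A class close to $[\alpha]$ has the form $[\alpha]+\sum_j c_j[\beta_j]$ with representative $\alpha+\sum_j c_j\beta_j$; retaining the same potential $\varphi$ gives
\[
\alpha+\sum_{j}c_j\beta_j+\sqrt{-1}\partial\bar\partial\varphi\geq\Big(\varepsilon-M\sum_{j}|c_j|\Big)\omega\geq\tfrac{\varepsilon}{2}\omega>0
\]
whenever $\sum_j|c_j|\leq\varepsilon/(2M)$. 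Thus an entire neighbourhood of $[\alpha]$ lies in $\mathcal K_{X/\mathcal F}$, proving openness. I expect this to be the only step requiring care: it is precisely here that one must upgrade a pointwise positivity estimate to an open set of cohomology classes, and this is made possible by the finite-dimensionality of $H^{1,1}(X/\mathcal F,\mathbb R)$ (so a neighbourhood is described by finitely many coefficients $c_j$) together with the compactness of $X$ (which supplies the uniform bound $M$).

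Finally, for salience, suppose both $[\alpha]$ and $-[\alpha]$ were transverse K\"ahler, say $\alpha+\sqrt{-1}\partial\bar\partial\varphi_1\geq\varepsilon_1\omega$ and $-\alpha+\sqrt{-1}\partial\bar\partial\varphi_2\geq\varepsilon_2\omega$ with $\varepsilon_1,\varepsilon_2>0$. Adding the two inequalities yields $\sqrt{-1}\partial\bar\partial(\varphi_1+\varphi_2)\geq(\varepsilon_1+\varepsilon_2)\omega$. Since $\md\omega=0$, the basic $(2n-1)$-form $\sqrt{-1}\bar\partial(\varphi_1+\varphi_2)\wedge\omega^{n-1}$ has exterior derivative $\sqrt{-1}\partial\bar\partial(\varphi_1+\varphi_2)\wedge\omega^{n-1}$, so the basic Stokes' theorem (Lemma \ref{basicstokes}) gives
\[
0<(\varepsilon_1+\varepsilon_2)\int_X\omega^n\wedge\chi\leq\int_X\sqrt{-1}\partial\bar\partial(\varphi_1+\varphi_2)\wedge\omega^{n-1}\wedge\chi=0,
\]
a contradiction. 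Hence $\mathcal K_{X/\mathcal F}\cap(-\mathcal K_{X/\mathcal F})$ contains no nonzero class, which is the asserted identity $\mathcal K_{X/\mathcal F}\cap(-\mathcal K_{X/\mathcal F})=\{0\}$.
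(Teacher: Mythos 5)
Your argument for the cone, convexity and openness properties is essentially the paper's: both proofs reduce openness to perturbing the coefficients of $[\alpha]$ in a fixed finite basis of $H^{1,1}(X/\mathcal{F},\mathbb{R})$ while keeping the same potential $\varphi$, with compactness of $X$ supplying the uniform pointwise bound; your version just makes the constant $M$ and the radius $\varepsilon/(2M)$ explicit. The genuine divergence is in the salience step. The paper argues pointwise: from $\sqrt{-1}\partial\bar\partial\varphi=\omega+\tilde\omega>0$ it derives a contradiction at a point where the basic function $\varphi$ attains its maximum, since there $\sqrt{-1}\partial\bar\partial\varphi\leq 0$. You instead integrate: you wedge with $\omega^{n-1}\wedge\chi$ and invoke the basic Stokes' theorem (Lemma \ref{basicstokes}) to conclude $\int_X\sqrt{-1}\partial\bar\partial(\varphi_1+\varphi_2)\wedge\omega^{n-1}\wedge\chi=0$, contradicting the strict lower bound. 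Both arguments are valid, but note that Lemma \ref{basicstokes} is stated for a \emph{closed, taut} foliated manifold, and tautness (the existence of the $\mathcal{F}$-relatively closed leafwise volume form $\chi$) is not among the hypotheses of the present lemma; it is only a standing assumption elsewhere in the section. The maximum-principle route needs nothing beyond compactness of $X$ and is therefore slightly more economical here; your Stokes route has the mild advantage of not requiring $\varphi_1+\varphi_2$ to attain a maximum in a leaf-constant way, but you should either add tautness to the hypotheses or note that it is being assumed implicitly. A last cosmetic point: as in the paper, the argument really shows $\mathcal{K}_{X/\mathcal{F}}\cap(-\mathcal{K}_{X/\mathcal{F}})$ contains no class at all (the zero class is not transverse K\"ahler by the same reasoning), so the stated identity $=\{0\}$ should be read as ``no nonzero common class''; this imprecision is inherited from the statement, not introduced by you.
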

\begin{proof}
This is a transverse version of \cite{tosattikawa} for the openness of K\"ahler cone and we use the idea adapted from \cite{tosattikawa}. By saying that $\mathcal{K}_{X/\mathcal{F}}$ is a cone we mean that if we are given $[\alpha]\in\mathcal{K}_{X/\mathcal{F}}$ and $\lambda\in\mathbb{R}_{>0}$, then $\lambda[\alpha]\in\mathcal{K}_{X/\mathcal{F}}$, which is obvious. The convexity of $\mathcal{K}_{X/\mathcal{F}}$ follows immediately from the fact that if $\omega_1$ and $\omega_2$ are transverse K\"ahler metrics on $(X,\mathcal{F})$ and $0\leq\lambda\leq 1$, then $\lambda\omega_1+(1-\lambda)\omega_2$ is also a tansverse K\"ahler metric. For the openness of $\mathcal{K}_{X/\mathcal{F}}$, we fix closed real basic $(1,1)$ forms $\{\alpha_1,\dots,\alpha_k\}$ on $(X,\mathcal{F})$ such that $\{[\alpha_1],\dots,[\alpha_k]\}$ is a basis of $H^{1,1}(X/\mathcal{F},\mathbb{R})$. Given a transverse K\"ahler class $[\alpha]\in\mathcal{K}_{X/\mathcal{F}}$ we can write
$[\alpha]=\sum_{i=1}^k\lambda_i[\alpha_i],$ for some $\lambda_i\in\mathbb{R}$. Since $[\alpha]\in\mathcal{K}_{X/\mathcal{F}}$, there exists a basic function $\varphi\in C^\infty(X/\mathcal{F},\mathbb{R})$ such that
$$\sum_{i=1}^k\lambda_i\alpha_i+\sqrt{-1}\partial\bar\partial\varphi>0.$$
Since $X$ is compact, it follows that
$$\sum_{i=1}^k\tilde{\lambda}_i\alpha_i+\sqrt{-1}\partial\bar\partial\varphi>0,$$
for all $\tilde{\lambda}_i$ sufficiently close to $\lambda_i$ ($1\leq i\leq k$), and so all $(1,1)$ classes in a neighborhood of $[\alpha]$ contain a transverse K\"ahler metric.

If $\omega$ is a transverse K\"ahler metric on $X$ and the class $-[\omega]$ is also transverse K\"ahler, then there is a transverse K\"ahler metric $\tilde{\omega}=-\omega+\sqrt{-1}\partial\bar\partial\varphi$ for some basic function $\varphi\in C^\infty(X,\mathcal{F})$, and so $\sqrt{-1}\partial\bar\partial\varphi=\omega+\tilde{\omega}>0$  everywhere on $X$. This is impossible, since $\sqrt{-1}\partial\bar\partial\varphi\leq 0$ at the points where $\varphi$ attains its maximum. Hence we have $\mathcal{K}_{X/\mathcal{F}}\cap(-\mathcal{K}_{X/\mathcal{F}})=\emptyset$.
\end{proof}
\begin{lem}\label{nef2}
Let $\mathcal{F}$ be a transverse K\"ahler foliation on $X$ with real dimension $k$ and complex codimension $n$. Then there holds $\mathcal{E}_{X/\mathcal{F}}=\overline{\mathcal{K}_{X/\mathcal{F}}}$ and
 $\mathcal{E}_{X/\mathcal{F}}^{\circ}=\mathcal{B}_{X/\mathcal{F}}$.
\end{lem}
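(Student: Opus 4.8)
The plan is to reduce both identities to statements about smooth representatives and closed positive basic currents, exploiting the basic $\partial\bar\partial$-lemma (valid since $\mathcal{F}$ is transverse K\"ahler), the finite-dimensionality of $H^{1,1}(X/\mathcal{F},\mathbb{R})$, and the weak-$*$ compactness of Lemma~\ref{tprop3-1.23}. Throughout I fix a transverse K\"ahler metric $\omega$, so that $[\omega]\in\mathcal{K}_{X/\mathcal{F}}$ lies in the interior of every cone in sight.

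I would first settle $\mathcal{E}_{X/\mathcal{F}}^\circ=\mathcal{B}_{X/\mathcal{F}}$, the transverse analogue of Boucksom's description of the big cone. For $\mathcal{B}_{X/\mathcal{F}}\subseteq\mathcal{E}_{X/\mathcal{F}}^\circ$: a transverse big class $[\alpha]$ carries a basic K\"ahler current $T=\alpha+\sqrt{-1}\partial\bar\partial\varphi\geq\varepsilon\omega$; writing a nearby class as $[\alpha]+[\gamma]$ with $\gamma$ a small closed smooth basic $(1,1)$ form (possible since $\dim H^{1,1}(X/\mathcal{F},\mathbb{R})<\infty$), the current $T+\gamma$ is still positive once $\gamma\geq-\tfrac{\varepsilon}{2}\omega$, so an entire neighbourhood of $[\alpha]$ lies in $\mathcal{E}_{X/\mathcal{F}}$ and hence $[\alpha]\in\mathcal{E}_{X/\mathcal{F}}^\circ$. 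For the reverse inclusion, if $[\alpha]\in\mathcal{E}_{X/\mathcal{F}}^\circ$ then $[\alpha]-\varepsilon[\omega]\in\mathcal{E}_{X/\mathcal{F}}$ for small $\varepsilon>0$, so it contains a closed positive basic current $S$; writing $S=\alpha-\varepsilon\omega+\sqrt{-1}\partial\bar\partial\psi$ via the current-level description of $H^{1,1}_{\mathrm{BC}}(X/\mathcal{F},\mathbb{R})$, the current $S+\varepsilon\omega=\alpha+\sqrt{-1}\partial\bar\partial\psi\geq\varepsilon\omega$ is a basic K\"ahler current in $[\alpha]$, so $[\alpha]\in\mathcal{B}_{X/\mathcal{F}}$.

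For $\mathcal{E}_{X/\mathcal{F}}=\overline{\mathcal{K}_{X/\mathcal{F}}}$ I would treat the two inclusions separately. The inclusion $\overline{\mathcal{K}_{X/\mathcal{F}}}\subseteq\mathcal{E}_{X/\mathcal{F}}$ is immediate: a transverse K\"ahler class contains a basic K\"ahler form, which is in particular a closed positive basic current, so $\mathcal{K}_{X/\mathcal{F}}\subseteq\mathcal{E}_{X/\mathcal{F}}$, and $\mathcal{E}_{X/\mathcal{F}}$ is a closed cone (shown above). The same $\varepsilon$-argument identifies $\overline{\mathcal{K}_{X/\mathcal{F}}}$ with $\mathcal{N}_{X/\mathcal{F}}$: nefness of $[\alpha]$ gives $\alpha+\sqrt{-1}\partial\bar\partial\varphi_\varepsilon+2\varepsilon\omega\geq\varepsilon\omega>0$, so $[\alpha]+2\varepsilon[\omega]\in\mathcal{K}_{X/\mathcal{F}}$ for all $\varepsilon>0$, and conversely. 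The substance is therefore the reverse inclusion $\mathcal{E}_{X/\mathcal{F}}\subseteq\overline{\mathcal{K}_{X/\mathcal{F}}}$: given $[\alpha]\in\mathcal{E}_{X/\mathcal{F}}$ with closed positive basic representative $T$, I would try to produce, for each $\varepsilon>0$, a smooth basic representative $\alpha_\varepsilon\in[\alpha]$ with $\alpha_\varepsilon\geq-\varepsilon\omega$, whence $[\alpha]+2\varepsilon[\omega]\in\mathcal{K}_{X/\mathcal{F}}$ and $[\alpha]\in\overline{\mathcal{K}_{X/\mathcal{F}}}$. Such $\alpha_\varepsilon$ would be sought by transplanting Demailly's regularization of closed positive $(1,1)$-currents to the foliated category: the local structure $T=1\otimes\tilde T$ of a basic current (Proposition~\ref{ramondexercise4.2.9}, Lemma~\ref{tlem1-3.30}) reduces the smoothing to the transverse model $U\subset\mathbb{C}^n$, the basic Poincar\'e and Dolbeault--Grothendieck lemmas of the Appendix glue the local potentials into a global basic one, and the mass bound $\int_X T\wedge\omega^{n-1}\wedge\chi$ from the basic Stokes theorem (Lemma~\ref{basicstokes}) together with Lemma~\ref{tprop3-1.23} controls the weak limit.

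I expect this regularization to be the main obstacle, and in fact the crux of the whole lemma: it is routine to run the smoothing while keeping every object basic, but the delicate point is to show that the negative loss in $\alpha_\varepsilon\geq-\varepsilon\omega$ can be driven to zero, since a priori Demailly's loss is governed by the singularities (Lelong-type numbers) of $T$ and need not vanish. This is precisely the place where the transverse K\"ahler hypothesis, the basic $\partial\bar\partial$-lemma, and the tautness of $\mathcal{F}$ must be used decisively, and it is where I would concentrate the technical effort; the remaining inclusions are formal consequences of the definitions and of the compactness and closedness results established above.
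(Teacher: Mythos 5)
Your proof of $\mathcal{E}_{X/\mathcal{F}}^{\circ}=\mathcal{B}_{X/\mathcal{F}}$ is correct and in fact more direct than the paper's: you show both inclusions head-on (perturbing a K\"ahler current by a small smooth basic form for $\mathcal{B}_{X/\mathcal{F}}\subseteq\mathcal{E}_{X/\mathcal{F}}^{\circ}$, and adding $\varepsilon\omega$ back to a positive current in $[\alpha]-\varepsilon[\omega]$ for the converse), whereas the paper first proves $\overline{\mathcal{B}_{X/\mathcal{F}}}=\mathcal{E}_{X/\mathcal{F}}$ by writing $[\alpha]=\lim_k([\alpha]+\tfrac1k[\omega])$ and then passes to interiors. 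Both routes are fine.

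The first identity is where your proposal goes astray, though the failure is instructive. You correctly isolate $\mathcal{E}_{X/\mathcal{F}}\subseteq\overline{\mathcal{K}_{X/\mathcal{F}}}$ as the only nontrivial inclusion of the literal statement, and you correctly observe that a Demailly-type regularization of a closed positive basic $(1,1)$-current incurs a loss governed by its Lelong-type singularities which ``need not vanish.'' You should have followed that observation to its conclusion: the inclusion is \emph{false} in general, so no amount of technical effort on the regularization will close the gap. Indeed the paper itself points out that taking $\mathcal{F}$ to be defined by a holomorphic submersion reduces these cones to the classical ones on a complex manifold, where pseudoeffective does not imply nef (a curve of negative self-intersection on a projective surface already gives a pseudoeffective, non-nef class). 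What the paper's own proof actually establishes is $\mathcal{N}_{X/\mathcal{F}}=\overline{\mathcal{K}_{X/\mathcal{F}}}$ --- the displayed $\mathcal{E}_{X/\mathcal{F}}$ is evidently a misprint for $\mathcal{N}_{X/\mathcal{F}}$, consistent with the label, with the cited source, and with the proof's closing line ``which proves \eqref{defntnef}.'' Your parenthetical ``the same $\varepsilon$-argument identifies $\overline{\mathcal{K}_{X/\mathcal{F}}}$ with $\mathcal{N}_{X/\mathcal{F}}$'' is therefore not an aside but the whole content of the first identity; the only direction needing care is $\overline{\mathcal{K}_{X/\mathcal{F}}}\subseteq\mathcal{N}_{X/\mathcal{F}}$, where, as in the paper, one fixes a basis $\{[\alpha_1],\dots,[\alpha_k]\}$ of $H^{1,1}(X/\mathcal{F},\mathbb{R})$ to convert the convergence $[\beta_i]\to 0$ of classes into smooth representatives $\tilde\beta_i\to 0$ in $C^\infty$, so that $\alpha+\varepsilon\omega+\sqrt{-1}\partial\bar\partial\varphi_i>\alpha+\beta_i>0$ for $i$ large. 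You have all the ingredients for this; you simply spent them on the wrong inclusion.
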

\begin{proof}
This is a transverse version of \cite[Lemma 2.2]{tosattikawa}. We use the idea adapted from \cite{tosattikawa} and assume that  $\omega$ is a transverse K\"ahler metric.

Condition in definition of transverse nef \eqref{defntnef} is equivalent to $[\alpha+\varepsilon\omega]\in\mathcal{K}_{X/\mathcal{F}}$, for all $\varepsilon>0$, which certainly implies that $[\alpha]\in \overline{\mathcal{K}_{X/\mathcal{F}}}$. Conversely, if
$[\alpha]\in \overline{\mathcal{K}_{X/\mathcal{F}}}$, then there is a sequence $\{\beta_i\}$ of closed real $(1,1)$ forms such that $\alpha+\beta_i>0$ for all $i$, and $[\beta_i]\to 0$ in $H^{1,1}(X/\mathcal{F},\mathbb{R})$.
As before we fix closed real $(1,1)$ forms $\{\alpha_1,\dots,\alpha_k\}$ on $X$ such that $\{[\alpha_1],\dots,[\alpha_k]\}$ is a basis of $H^{1,1}(X/\mathcal{F},\mathbb{R})$, and for each $i$ write
$$[\beta_i]=\sum_{j=1}^k \lambda_{ij}[\alpha_j],$$
with $\lambda_{ij}\in\mathbb{R}$. Since $[\beta_i]\to 0$, and $\{[\alpha_1],\dots,[\alpha_k]\}$ is a basis, we conclude that $\lambda_{ij}\to\infty$ as $i\to\infty$, for each fixed $j$.
If we let
$$\tilde{\beta}_i=\sum_{j=1}^k \lambda_{ij}\alpha_j,$$
then the forms $\tilde{\beta}_i$ converge smoothly to zero, as $i\to\infty$, and we can find smooth basic functions $\varphi_i\in C^\infty(X/\mathcal{F},\mathbb{R})$ such that
$\beta_i=\tilde{\beta}_i+\sqrt{-1}\partial\bar\partial\varphi_i$. For every $\varepsilon>0$ we choose $i$ sufficiently large so that $\tilde{\beta}_i<\varepsilon\omega$ on $X$, and so
$$\alpha+\varepsilon\omega+\sqrt{-1}\partial\bar\partial\varphi_i>\alpha
+\tilde{\beta}_i+\sqrt{-1}\partial\bar\partial\varphi_i=\alpha+\beta_i>0,$$
which proves \eqref{defntnef}.

The fact that transverse big class contains in $\mathcal{E}_{X/\mathcal{F}}^{\circ}$ is obvious. On the other hand, for each $[\alpha]\in \mathcal{E}_{X/\mathcal{F}},$ we have
\begin{equation*}
[\alpha]=\lim_{k\to \infty}\left([\alpha]+\frac{1}{k}[\omega]\right),
\end{equation*}
with $[\omega]\in \mathcal{K}_{X/\mathcal{F}}$. Since $\left([\alpha]+\frac{1}{k}[\omega]\right)$ is transverse big, it follows from the closeness of $\mathcal{E}_{X/\mathcal{F}}$ that
\begin{equation*}
\mathcal{E}_{X/\mathcal{F}}\subset \mbox{closure of transverse big cone} \subset \mathcal{E}_{X/\mathcal{F}},
\end{equation*}
which yields $\mathcal{E}_{X/\mathcal{F}}^{\circ}=\mathcal{B}_{X/\mathcal{F}}$.
\end{proof}
Clearly every transverse K\"ahler class is transverse nef and transverse big, and every transverse big class is transverse pseudoeffective. Aslo, a transverse nef class is transverse pseudoeffective.

There are in general no other implications among these notions since we can see $H_{\mathrm{BC}}^{1,1}(M,\mathbb{R})$ as $H_{\mathrm{BC}}^{1,1}(X/\mathcal{F},\mathbb{R})$, where $M$ is a complex manifold and  $\mathcal{F}$  is defined by a holomorphic submersion $\pi:\;X\to M$ (cf. \cite{tosattinakamaye}).


\section{Proof of the Main Theorem}
\label{section:mainthm}
\begin{lem}
\label{taubinyau}
Let $(X,\mathcal{F})$ be a closed oriented$,$ taut$,$ transverse Hermitian foliated manifold$,$ where $\mathcal{F}$ is the foliation with complex codimension $n,$ and $\omega$ denote a transverse Hermitian metric.
Given a smooth basic function $h\in C^\infty(X/\mathcal{F},\mathbb{R}),$ there exists a unique smooth basic function $u\in C^\infty(X/\mathcal{F},\mathbb{R})$ such that
\begin{equation}
(\omega+\sqrt{-1}\partial\bar\partial u)^n=e^{u+h}\omega^n,\quad
\omega+\sqrt{-1}\partial\bar\partial u>0.
\end{equation}
\end{lem}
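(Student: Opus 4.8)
The plan is to solve this transverse complex Monge-Amp\`ere equation by the continuity method, treating it as an equation for the basic function $u$ on the local leaf space. Introduce the one-parameter family
$$
(\om+\ddb u_t)^n=e^{u_t+th}\om^n,\qquad \om+\ddb u_t>0,\qquad t\in[0,1],
$$
and let $S\subset[0,1]$ be the set of $t$ admitting a smooth basic solution $u_t$. Since $u_0\equiv 0$ solves the case $t=0$, we have $S\neq\emptyset$, and it suffices to prove that $S$ is open and closed, whence $S=[0,1]$ and $u:=u_1$ is the required solution. Throughout one stays in the basic category: the operators involved preserve basic functions, so each $u_t$ is automatically basic.

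For openness, set $F(u):=\log\frac{(\om+\ddb u)^n}{\om^n}-u$, so that the equation becomes $F(u_t)=th$. By \eqref{ricciidentityu} the linearization of $F$ at a solution $u$ is $v\mapsto \Delta_{\om_u}v-v$, where $\om_u:=\om+\ddb u$ and $\Delta_{\om_u}v:=g_u^{i\bar j}\partial_i\partial_{\bar j}v$ is the transverse complex Laplacian on basic functions. The zeroth-order term $-v$ makes the kernel trivial (at an interior maximum of $v$ one has $\Delta_{\om_u}v\le0$, forcing $v\le0$ there, and symmetrically at a minimum), and a scalar second-order transversely elliptic operator has index zero on the basic H\"older spaces $C^{k,\al}(X/\F,\R)$, the basic elliptic theory on a taut foliation being modelled on that of the local leaf space (cf. \cite{elk90}). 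Hence the linearization is an isomorphism and the implicit function theorem gives openness of $S$.

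For closedness one needs uniform a priori estimates along $S$. The $C^0$ bound is immediate from the maximum principle: at a maximum of $u_t$ one has $\ddb u_t\le0$, so $(\om_{u_t})^n\le\om^n$ there and $e^{u_t+th}\le1$, giving $u_t\le-th\le\sup_X|h|$; the reverse bound at a minimum yields $\|u_t\|_{C^0}\le\sup_X|h|$ uniformly. The crucial step is the second-order estimate $\|\ddb u_t\|_{C^0}\le C$. Because $\om$ is only transverse \emph{Hermitian}, computing $\Delta_{\om_{u_t}}(\tr_\om\om_{u_t})$ produces the transverse torsion terms $T^k_{ij}$ recorded in \eqref{commutate} and \eqref{ricciid2times}, which are absent in the K\"ahler case. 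I would absorb these following the Hermitian Monge-Amp\`ere estimates of Tosatti-Weinkove \cite{twjams}, applying the maximum principle to a quantity of the form $\tr_\om\om_{u_t}-Au_t$ with $A$ large and using the uniform $C^0$ bound to dominate the torsion; the computation is carried out on basic tensors in a distinguished chart, where it descends to the transverse directions. Once $\ddb u_t$ is bounded the equation is uniformly elliptic, so the transverse Evans-Krylov theorem gives a uniform $C^{2,\al}(X/\F,\R)$ bound and Schauder bootstrapping gives uniform $C^\infty$ bounds; by Arzel\`a-Ascoli the limit of any sequence $t_j\to t_\infty$ in $S$ solves the equation at $t_\infty$, so $S$ is closed.

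The main obstacle is exactly this second-order estimate in the transverse Hermitian setting: the torsion contributions, which vanish when $\om$ is transverse K\"ahler, must be shown to remain controllable after passing to basic quantities, and every maximum-principle argument has to respect the leaf-invariant structure. Finally, uniqueness follows from the maximum principle: if $u,u'$ both solve the equation, then at a maximum of $u-u'$ one has $\ddb(u-u')\le0$, hence $\om_u\le\om_{u'}$ and $(\om_u)^n\le(\om_{u'})^n$ there, so $e^{u+h}\le e^{u'+h}$, i.e.\ $u\le u'$ at that point and therefore $\max_X(u-u')\le0$; by symmetry $u\equiv u'$.
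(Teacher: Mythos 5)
Your proof follows the same overall skeleton as the paper's: the continuity method with the family $\mathscr{H}(u,t)=\log\frac{(\omega+\sqrt{-1}\partial\bar\partial u)^n}{\omega^n}-u-th$, openness via the linearization $\Delta_{\tilde\omega}-\mathrm{id}$ (trivial kernel by the maximum principle, surjectivity from the index-zero property of the transverse Laplacian), the same $C^0$ bound, and closedness via a priori estimates followed by Evans--Krylov and Schauder. The one place where you genuinely diverge is the second-order estimate. The paper proves the gradient-dependent bound $\sup_X|\partial\bar\partial u|_\omega\leq CK$ with $K=1+\sup_X|\partial u|_\omega^2$, using the largest-eigenvalue test function $H=\log\lambda_1+\varphi(|\partial u|_g^2)+\psi(u)$ in the style of Sz\'ekelyhidi and of Tosatti--Weinkove's gradient-term paper, and then recovers the first-order estimate by a blow-up (Liouville-type) argument. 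You instead propose the classical direct route of Tosatti--Weinkove's JAMS 2010 Hermitian Monge--Amp\`ere paper: a maximum principle applied to a quantity of the form $\tr_\omega\omega_{u_t}-Au_t$ (or its logarithmic/exponential variant), absorbing the transverse torsion terms via the Cherrier--Guan--Li--Tosatti--Weinkove computation and deducing the $C^2$ bound directly from the $C^0$ bound, with no intermediate gradient estimate. Both routes are valid in the transverse setting since all quantities are basic and the computation descends to the local leaf space; yours avoids the blow-up argument entirely at the cost of a heavier third-order/torsion computation at the maximum point, while the paper's route keeps the pointwise computation cleaner but must import the Liouville theorem for the gradient bound. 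Your observation that the $u$-dependence of the right-hand side $F=u+th$ is harmless matches the paper's explicit remark to the same effect, and your uniqueness argument via the maximum principle is the standard one the paper leaves as ``routine.'' (Minor point: the reference you want for the direct Hermitian $C^2$ estimate is \cite{twjams10} rather than \cite{twjams}.)
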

\begin{proof}
This is a transverse version of \cite{aubin78,yau1978}. We will adapt the idea from \cite{gaborjdg,stw1503,twarxiv1906} (cf.\cite{fengzhengsasaki1}) to prove it.
We use the method of continuity and consider a family of equations
\begin{equation}\label{fvarphit}
\mathscr{H}(u,t):=\log\frac{(\omega+\sqrt{-1}\partial\bar\partial u)^n}{\omega^n}
-u-th=0,\quad \omega+\sqrt{-1}\partial\bar\partial u>0.
\end{equation}
We set
$$
\mathscr{T}:=\Big\{t\in[0,1]:\;\eqref{fvarphit}\mbox{ has a solution }u\in C^{2,\alpha}(X/\mathcal{F},\mathbb{R})\mbox{ with $\alpha\in(0,1)$ fixed}\Big\}.
$$
Note that $0\in\mathscr{T}$ since $\mathscr{H}(0,0)=0$. For the openness of $\mathscr{T}$, we fix $t\in \mathscr{T}$. A direct calculation yields that
\begin{equation*}
\left(D_u \mathscr{H}\right)_{(u_t,t)}\eta=\Delta_{\tilde{\omega}} \eta-\eta,\quad \forall\,\eta\in C^{2,\alpha}(X/\mathcal{F},\mathbb{R}),
\end{equation*}
where
$$
 \Delta_{\tilde{\omega}} \eta:=\frac{n\sqrt{-1}\partial\bar\partial\eta\wedge\tilde{\omega}^{n-1}}
{\tilde{\omega}^n},\quad \tilde{\omega} :=\omega+\sqrt{-1}\partial\bar\partial u_t>0.
$$
We claim that $(D_{u}\mathscr{H})_{(u_t,t)}$ is bijective. Indeed, since $\tilde{\omega}$ is a positive basic $(1,1)$ form, both $\tilde{\Delta}$ and $(D_{u}\mathscr{H})_{(u_t,t)}$ are strictly transverse elliptic operators. For each $\eta \in \ker (D_{u}\mathscr{H})_{(u_t,t)},$ at the point $\mathbf{x}_{\mathrm{max}}$ (resp. $\mathbf{x}_{\mathrm{min}}$) where $\eta$ attains its maximum (resp. its minimum), there holds
\begin{equation*}
-\eta(\mathbf{x}_{\mathrm{max}})\geq0 \;(\mbox{resp.}\;-\eta(\mathbf{x}_{\mathrm{min}})\leq0),
\end{equation*}
which yields that $\eta\equiv0$. Hence $\ker (D_{u}\mathscr{H})_{(u_t,t)}=\{0\}$, i.e., $(D_{u}\mathscr{H})_{(u_t,t)}$ is injective.

Since the index of $(D_{u}\mathscr{H})_{(u_t,t)}$ is the same as the index of the transverse Laplacian defined by $\tilde{\omega}$ which is zero by \cite[Lemma 3.5]{baragliahekmati2018}, the injectivity of the operator directly implies that the operator $(D_{u}\mathscr{H})_{(u_t,t)}$ is surjective.

Since $(D_{u}\mathscr{H})_{(u_t,t)}$ is bijective, the implicit function theorem yields that $\mathscr{T}$ is open at the point $t\in\mathscr{T}$.

For the closeness of $\mathscr{T}$, we need the a priori estimates for the solutions to \eqref{fvarphit}.

\textbf{A uniform bound }
\begin{equation}
\label{equzeroestimate}\sup_X|u|  \leq C
\end{equation}
follows immediately from the standard maximum principle. Indeed, at the point $\mathbf{x}_{\mathrm{max}}$ where $u$ attains its maximum, we have (see \cite{twjams})
$$
\left(\sqrt{-1}\partial\bar\partial u\right)(\mathbf{x}_{\mathrm{max}})\leq0,
$$
and from \eqref{fvarphit} the upper bound of $u\leq -t\inf_Xh$ follows. The lower bound of $u$ is similar.
Here and henceforth, $C$ will denote a uniform constant independent of $t$ that may change from line to line.

We denote $F:=u+th$. We assume that $\sup_Xu\leq 0$ by modifying $\tilde{u}:=u+t\inf_Xh$ and
$\tilde{h}=h-\inf_Xh$ such that $F=\tilde{u}+t\tilde{h} $.

\textbf{For the second order estimate}, we claim
\begin{equation}
\label{equ2ndestimate}
\sup_M|\partial\bar\partial u|_\omega\leq CK,
\end{equation}
with $K:=1+\sup_M|\partial u|_\omega^2$.

In the distinguished chart $(U;t^1,\cdots,t^k,z^1,\cdots,z^n),$
we write
\begin{equation}
\label{defnvartheta}
\tilde\omega:=\omega+\sqrt{-1}\partial\bar\partial u_t=\sqrt{-1}\tilde{g}_{i\bar j}\mathrm{d}z^i\wedge\mathrm{d}\bar z^j,\quad \omega=\sqrt{-1}g_{i\bar j}\mathrm{d}z^i\wedge\mathrm{d}\bar z^j.
\end{equation}
It follows from \eqref{fvarphit} and the inequality of arithmeticand geometric mean that
\begin{equation}
\label{ftau}
\mathrm{tr}_{\tilde\omega}\omega\geq n\left(\inf_X\frac{\omega^n}{\tilde\omega^n}\right)^{\frac{1}{n}}\geq \tau>0,
\mathcal{}\end{equation}
where $\tau$ is a constant independent of $t$.

Let $\lambda_1 \ge \lambda_2 \ge \cdots \ge \lambda_n>0$ be the eigenvalues of $\tilde{\omega}$ with respect to $\omega$.  We consider the quantity
$$
H(\mathbf{x}):= \log \lambda_1(\mathbf{x})+\varphi(|\partial u|_g^2(\mathbf{x}))+ \psi(u(\mathbf{x})),\quad \forall\,\mathbf{x}\in M,
$$
where we define
$$\varphi(t) = \frac{t}{K}, \; t\ge 0, \quad \textrm{and} \quad \psi(t) =  e^{-At}, \; t \le 0,$$
with
$$K = \sup_X | \partial u|^2_g+1,$$
and $A>0$ to be determined later.
A direct calculation yields that
$$ - \psi' \ge A>0, \quad \psi'' = -A\psi'.$$
We assume that $H$ attains its maximum at the point $\mathbf{x}_0\in X.$
In the following, we will calculate at the point $\mathbf{x}_0$ under the distinguished  coordinate $(t^1,\cdots,t^k,z^1,\cdots,z^n)$ for which $\omega$ is the identity and $(\tilde{g}_{i\bar j})$ is diagonal with entries $\tilde g_{i\bar i}=\lambda_i$ for $1\leq i\leq n$, unless otherwise indicated.

Since $\lambda_1$ may not be smooth at $\mathbf{x}_0,$ we introduce a smooth function $\phi$ on $M$ by (cf. \cite[Lemma 5]{brendleetx2017} and \cite[Proof of Theorem 3.1]{twarxiv1906})
 \begin{equation}
 \label{defnphi}
H(\mathbf{x}_0)\equiv \log f(\mathbf{x}) +\varphi(|\partial u|_g^2(\mathbf{x}))+\psi(u(\mathbf{x})),\quad \forall\,\mathbf{x}\in M.
\end{equation}
Note that $f$ satisfies
\begin{equation}
f(\mathbf{x})\geq \lambda_1(\mathbf{x})\quad \forall\,\mathbf{x}\in X,\quad f(\mathbf{x}_0)=\lambda_1(\mathbf{x}_0).
\end{equation}
Note that
\begin{equation}
\label{defnl}
\Delta_{\tilde{\omega}} u=\tilde{g}^{i\bar j}\partial_i\partial_{\bar j}u=n-\mathrm{tr}_{\tilde \omega}\omega.
\end{equation}
Applying the operator $\Delta_{\tilde{\omega}}$ defined in \eqref{defnl} to \eqref{defnphi}, one infers
\begin{equation}
\label{0lhatx0}
0=\frac{1}{\lambda_1}\Delta_{\tilde{\omega}}f-\frac{1}{\lambda_1^2}\tilde{g}^{i\bar i}|\nabla_if|^2
+\varphi'\Delta_{\tilde\omega}(|\partial u|_{\omega}^2)
+\varphi''\tilde{g}^{i\bar i}\left|\partial_i|\partial u|_{\omega}^2\right|^2
+\psi'\Delta_{\tilde{\omega}}u+\psi''\tilde{g}^{i\bar i}|\partial_iu|^2.
\end{equation}
Differentiating \eqref{defnphi} one can deduce
\begin{equation}
\label{nalbaiphi}
0=\frac{\nabla_if}{f}+\varphi'\nabla_i(|\partial u|_{\omega}^2) +\psi'(\nabla_iu).
\end{equation}
\begin{lem}
\label{lemddbarphi}
Let $\mu$ denote the multiplicity of the largest eigenvalue of $(\tilde{g}_{i\bar j})$ with respect to $(g_{i\bar j})$ at $\mathbf{x}_0,$ so that
$\lambda_1=\cdots=\lambda_\mu>\lambda_{\mu+1}\geq \cdots\geq \lambda_n.$ Then at $\mathbf{x}_0,$ for each $i$ with $1\leq i\leq n,$ there hold
\begin{align}
\label{nablaiphi}
&\nabla_i\tilde{g}_{k\bar\ell}= (\nabla_if) g_{k\bar \ell},\quad \mbox{for}\quad 1\leq k,\,\ell\leq \mu,\\
\label{ddbarphi}
&\nabla_{\bar i}\nabla_if \geq \nabla_{\bar i}\nabla_{i}\tilde{g}_{1\bar 1}+\sum_{q>\mu}\frac{\left|\nabla_i\tilde{g}_{q\bar 1}\right|^2+\left|\nabla_{\bar i}\tilde{g}_{q\bar 1}\right|^2}{\lambda_1-\lambda_q}.
\end{align}
\end{lem}
\begin{proof}
See \cite[Lemma 3.2]{twarxiv1906}.
\end{proof}
Since we work with $(z^1,\cdots,z^n)$ at the point $\mathbf{x}_0$ under the distinguished  coordinate $$(t^1,\cdots,t^k,z^1,\cdots,z^n),$$ the conclusion follows from the same argument of \cite[Proof of Theorem 3.1]{twarxiv1906} with vanishing gradient term by replacing $u_{ij}-u_{ji}=T_{ji}^pu_p$, the linearized operator $L$ in \cite[Formula (3.1)]{twarxiv1906},
\cite[Formula (3.13)]{twarxiv1906} and \cite[Formula (3.14)]{twarxiv1906}
with \eqref{ricciidentityu}, $\Delta_{\tilde\omega}$, \eqref{ricciid2times} and \eqref{ricciidentity} respectively. We also point out that here $F=u+th$ contains the solution $u$ which is harmless. Indeed, it follows from   \eqref{ftau} that
$$
\nabla_1\nabla_{\bar 1}F=O(\lambda_1)
$$
will be absorbed into $\lambda_1\left(\mathrm{tr}_{\tilde{\omega}}\omega\right)$, and that, together with the definition of $\varphi,$ we know that
$$
\varphi' (\partial_pF)(\partial_{\bar p}u)
=O(1)
$$
is harmless.

\textbf{The first order estimate}
\begin{equation}
\label{equ1stestimate}
\sup_M|\partial  u|_\omega\leq C
\end{equation}
follows from \eqref{equ2ndestimate} and the blow-up argument (see the details in \cite{fengzhengsasaki1}).

Given \eqref{equzeroestimate}, \eqref{equ2ndestimate} and \eqref{equ1stestimate}, $C^{2,\alpha}$-estimate for some $0<\alpha<1$ follows from the Evans-Krylov theory \cite{evanscpam1982,krylov1982,trudingertransactionams1983} (see also \cite{twwycvpde}).

Differentiating the equations and using the Schauder theory (see for example \cite{gt1998}), we then deduce uniform a priori $C^k$ estimates for all $k\geq 0.$

Finally, by routin arguments, the proof of Lemma \ref{taubinyau} is completed.
\end{proof}
\begin{lem}
\label{lemwuyauinventioneprop9}
Let $(X,\mathcal{F})$ be a closed oriented$,$ taut$,$ transverse K\"ahler foliated manifold$,$ where $\mathcal{F}$ is the foliation with complex codimension $n,$ and let $\omega$ and $\hat{\omega}$ be transverse K\"ahler metrics such that $\omega$ has transverse holomorphic sectional curvature bounded above by a constant $-\kappa\leq 0,$ and that $\hat{\omega}$ satisfies
\begin{equation}
\label{assum}
\mathrm{Ric}(\hat{\omega})\geq -\lambda\hat{\omega}+\nu\omega,
\end{equation}
for some constants $\lambda,\nu>0$. Then one can infer
\begin{equation}
\label{tdeltatrace}
\Delta_{\hat{\omega}}\log\mathrm{tr}_{\hat{\omega}}{\omega}\geq\left(\frac{n+1}{2n}\kappa+\frac{\nu}{n}\right)\ \mathrm{tr}_{\hat{\omega}}{\omega}-\lambda.
\end{equation}
In particular$,$ there holds
\begin{equation}
\label{upperbdttrace}
\sup_X \mathrm{tr}_{\hat{\omega}}{\omega}\leq \frac{\lambda}{\frac{n+1}{2n}\kappa+\frac{\nu}{n}}.
\end{equation}
\end{lem}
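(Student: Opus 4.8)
The plan is to reproduce, in the transverse setting, the refined Schwarz-lemma computation behind the works of Royden, Wu--Yau \cite{wuyauinventiones} and Tosatti--Yang \cite{tosattiyangjdg}. The first observation is that, because both $\omega$ and $\hat\omega$ are \emph{transverse K\"ahler}, their transverse torsion tensors $T_{ij}^k$ vanish, so the commutation formulas \eqref{commutate}, \eqref{ricciidentity} and \eqref{ricciid2times} collapse to the standard torsion-free K\"ahler identities. Consequently every step of the calculation may be carried out in a distinguished chart, working only with basic tensors and the basic Chern connection $\nabla$ of $\hat\omega$, formally identically to the case of a K\"ahler manifold.

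First I would compute $\Delta_{\hat\omega}\mathrm{tr}_{\hat\omega}\omega$ by the Chern--Lu/Bochner method. Writing $S:=\mathrm{tr}_{\hat\omega}\omega=\hat g^{i\bar j}g_{i\bar j}$ and using $\nabla\hat g=0$ together with the K\"ahler symmetry of $\nabla g$, one differentiates twice and commutes covariant derivatives; the commutator produces the curvature of $\hat\omega$ and that of $\omega$. The outcome is the usual identity in which $\Delta_{\hat\omega}S$ equals a nonnegative quadratic gradient term, plus the term $\hat g^{i\bar j}\hat g^{k\bar l}\mathrm{Ric}(\hat\omega)_{i\bar l}g_{k\bar j}$, minus the target-curvature term $\hat g^{i\bar j}\hat g^{k\bar l}R_{i\bar j k\bar l}$ built from the curvature of $\omega$. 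Passing to $\log S$ and invoking Cauchy--Schwarz to absorb the gradient terms leaves the inequality $\Delta_{\hat\omega}\log S\geq S^{-1}\big(\hat g^{i\bar j}\hat g^{k\bar l}\mathrm{Ric}(\hat\omega)_{i\bar l}g_{k\bar j}-\hat g^{i\bar j}\hat g^{k\bar l}R_{i\bar j k\bar l}\big)$.

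Next I would estimate the two surviving terms at a fixed point, in coordinates where $\hat g_{i\bar j}=\delta_{ij}$ and $g_{i\bar j}=\lambda_i\delta_{ij}$, so that $S=\sum_i\lambda_i$. For the Ricci term, feeding in the hypothesis \eqref{assum} gives $\hat g^{i\bar j}\hat g^{k\bar l}\mathrm{Ric}(\hat\omega)_{i\bar l}g_{k\bar j}=\sum_i \mathrm{Ric}(\hat\omega)_{i\bar i}\lambda_i\geq -\lambda S+\nu\sum_i\lambda_i^2\geq -\lambda S+\tfrac{\nu}{n}S^2$, the last step again by Cauchy--Schwarz. The decisive point is the target-curvature term: the hypothesis only bounds the holomorphic sectional curvature $H_\omega\leq-\kappa$, i.e. the diagonal quantity $R(W,\overline W,W,\overline W)$, whereas what appears is the full contraction $\sum_{i,k}R_{i\bar i k\bar k}$. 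Here I would invoke Royden's polarization lemma, which upgrades the holomorphic sectional curvature bound to $\sum_{i,k}R_{i\bar i k\bar k}\leq -\tfrac{n+1}{2n}\kappa\,S^2$, with constant $\tfrac{n+1}{2n}$ sharp (as the complex space form shows). Combining the two estimates yields $\Delta_{\hat\omega}\log S\geq\big(\tfrac{n+1}{2n}\kappa+\tfrac{\nu}{n}\big)S-\lambda$, which is precisely \eqref{tdeltatrace}.

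Finally, \eqref{upperbdttrace} follows from the transverse maximum principle used already in the proof of Lemma \ref{taubinyau}: since $S$ is a basic function on the closed manifold $X$, it attains its maximum at some $\mathbf{x}_0$, where $\Delta_{\hat\omega}\log S\leq 0$; substituting into \eqref{tdeltatrace} and using $\nu>0$ (so that the coefficient $\tfrac{n+1}{2n}\kappa+\tfrac{\nu}{n}$ is strictly positive) gives $S(\mathbf{x}_0)\leq \lambda/\big(\tfrac{n+1}{2n}\kappa+\tfrac{\nu}{n}\big)$, hence the stated sup bound. I expect the main obstacle to be Royden's lemma rather than the differential-geometric bookkeeping: the vanishing of the transverse torsion makes the Bochner computation carry over verbatim from the K\"ahler case, but converting the holomorphic sectional curvature bound into control of the bisectional contraction $\sum_{i,k}R_{i\bar i k\bar k}$ with the optimal constant $\tfrac{n+1}{2n}$ is the genuinely nontrivial algebraic input.
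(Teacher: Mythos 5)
Your proposal is correct and follows essentially the same route as the paper: the paper likewise reduces to the distinguished chart (where all quantities are basic and the transverse K\"ahler condition makes the computation identical to the K\"ahler case), invokes the Yau--Schwarz/Chern--Lu calculation together with Royden's polarization trick exactly as in Wu--Yau and Tosatti--Yang to get \eqref{tdeltatrace}, and concludes \eqref{upperbdttrace} by the maximum principle at the maximum of $\log\mathrm{tr}_{\hat\omega}\omega$. Your write-up simply spells out the details that the paper delegates to the cited references.
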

\begin{proof}
This a transverse version of \cite[Proposition 9]{wuyauinventiones}
(cf.\cite{wuyauzhengmrl2009,wongwuyauproceedingams2012}) which is an application of Yau's Schwarz Lemma \cite{yauajm1978} and Royden's trick \cite{roydencmh1980}. We refer to \cite{ps09,weinkovekrfpark} for the simplified calculation.

In the distinguished chart $(U;t^1,\cdots,t^k,z^1,\cdots,z^n)$, we work with the qualities of $(z^1,\cdots,z^n).$ Hence \eqref{tdeltatrace} follows from the same calculation  in the proof of \cite[Proposition 9]{wuyauinventiones} and \cite[Lemma 2.1]{tosattiyangjdg}. At the point where $\log\mathrm{tr}_{\hat{\omega}}{\omega}$ attain its maximum, there holds
$$
\Delta_{\hat{\omega}}\log\mathrm{tr}_{\hat{\omega}}{\omega}\leq 0,
$$
which yields \eqref{upperbdttrace}.
\end{proof}

\begin{lem}
\label{wuyauinventioneprop8}
Let $(X,\mathcal{F})$ be a closed oriented$,$ taut$,$ transverse K\"ahler foliated manifold$,$ where $\mathcal{F}$ is the foliation with complex codimension $n,$ and $\omega$ denote a transverse K\"ahler metric. If the normal canonical line bundle $K_{X/\mathcal{F}}$ is transverse nef$,$ then there hold
\begin{enumerate}
\item
\label{wuyauprop81} For each $\varepsilon>0,$ there exists a smooth basic function $u_\varepsilon\in C^\infty(X/\mathcal{F},\mathbb{R})$ such that
    \begin{equation*}
    \omega_\varepsilon^n=e^{u_\varepsilon}\omega^n,\quad \mbox{with}\quad
    \omega_\varepsilon:=-\mathrm{Ric}(\omega)
    +\sqrt{-1}\partial\bar\partial u_\varepsilon+\varepsilon\omega>0.
    \end{equation*}
Moreover$,$ there holds
\begin{equation}
\label{eq:lowRic}
\mathrm{Ric}(\omega_{\varepsilon}) = - \omega_{\varepsilon} + \varepsilon \omega \geq - \omega_{\varepsilon},
\end{equation}
and
\begin{equation}
\label{eq:uppu}
\sup_X u_{\varepsilon} \le C,
\end{equation}
where the constant $C>0$ depends only on $\omega$ and $n$.
\item \label{it:nefL2} For each $k = 1, \cdots, n,$ there holds
\begin{equation}
\label{equit:nefl2}
\int_X \left(-c_1(X/\mathcal{F})\right)^k \wedge  \omega^{n-k}\wedge\chi \ge e^{ \frac{(k-n)C}{n}} \int_X \left(-c_1(X/\mathcal{F})\right)^n \wedge\chi\ge 0,
\end{equation}
where the constant $C$ is the same as that in \eqref{eq:uppu}.
\end{enumerate}
\end{lem}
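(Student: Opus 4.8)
The plan is to derive both parts from the transverse Aubin--Yau solvability (Lemma~\ref{taubinyau}) together with the definition \eqref{defntnef} of transverse nefness, following the scheme of Wu--Yau. For part~\ref{wuyauprop81}, the first step is to realize the class $[-\mathrm{Ric}(\omega)]+\varepsilon[\omega]$ by a genuine transverse K\"ahler form. Since $2\pi c_1(X/\mathcal{F})=[\mathrm{Ric}(\omega)]$, the hypothesis that $K_{X/\mathcal{F}}$ is transverse nef means exactly that $[-\mathrm{Ric}(\omega)]$ is a transverse nef class; hence by \eqref{defntnef} (applied with $\varepsilon/2$) there is a smooth basic $\varphi_\varepsilon$ with $\chi_\varepsilon:=-\mathrm{Ric}(\omega)+\sqrt{-1}\partial\bar\partial\varphi_\varepsilon+\varepsilon\omega\geq\frac{\varepsilon}{2}\omega>0$, a transverse K\"ahler metric. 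Substituting $u_\varepsilon=\varphi_\varepsilon+w$, the equation $\omega_\varepsilon^n=e^{u_\varepsilon}\omega^n$ with $\omega_\varepsilon=\chi_\varepsilon+\sqrt{-1}\partial\bar\partial w$ becomes $(\chi_\varepsilon+\sqrt{-1}\partial\bar\partial w)^n=e^{w+h}\chi_\varepsilon^n$, where $h:=\varphi_\varepsilon+\log(\omega^n/\chi_\varepsilon^n)$ is smooth and basic; Lemma~\ref{taubinyau} with background $\chi_\varepsilon$ then produces a unique smooth basic $w$, hence $u_\varepsilon$ and $\omega_\varepsilon>0$.

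Next I would verify the two assertions of part~\ref{wuyauprop81}. Applying $-\sqrt{-1}\partial\bar\partial\log\det$ to $\omega_\varepsilon^n=e^{u_\varepsilon}\omega^n$ gives $\mathrm{Ric}(\omega_\varepsilon)=\mathrm{Ric}(\omega)-\sqrt{-1}\partial\bar\partial u_\varepsilon$; inserting $\sqrt{-1}\partial\bar\partial u_\varepsilon=\omega_\varepsilon+\mathrm{Ric}(\omega)-\varepsilon\omega$ from the definition of $\omega_\varepsilon$ yields $\mathrm{Ric}(\omega_\varepsilon)=-\omega_\varepsilon+\varepsilon\omega\geq-\omega_\varepsilon$, which is \eqref{eq:lowRic}. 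For the uniform estimate \eqref{eq:uppu}, I would use the maximum principle: at a maximum point $\mathbf{x}_0$ of $u_\varepsilon$ one has $\sqrt{-1}\partial\bar\partial u_\varepsilon(\mathbf{x}_0)\leq0$, so $0<\omega_\varepsilon(\mathbf{x}_0)\leq(-\mathrm{Ric}(\omega)+\varepsilon\omega)(\mathbf{x}_0)\leq(-\mathrm{Ric}(\omega)+\omega)(\mathbf{x}_0)$ for $\varepsilon\in(0,1]$, all three forms being positive at $\mathbf{x}_0$. Monotonicity of the determinant on positive Hermitian forms then gives $e^{u_\varepsilon(\mathbf{x}_0)}=\frac{\omega_\varepsilon^n}{\omega^n}(\mathbf{x}_0)\leq\frac{(-\mathrm{Ric}(\omega)+\omega)^n}{\omega^n}(\mathbf{x}_0)$, whence $\sup_X u_\varepsilon=u_\varepsilon(\mathbf{x}_0)\leq C$ with $C>0$ any upper bound for $\log\sup_X\frac{(-\mathrm{Ric}(\omega)+\omega)^n}{\omega^n}$, depending only on $\omega$ and $n$ and uniform in $\varepsilon$.

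For part~\ref{it:nefL2} I would combine a pointwise Maclaurin inequality with a cohomological limit. Diagonalizing $\omega_\varepsilon$ with respect to $\omega$ with eigenvalues $\mu_1,\dots,\mu_n>0$ one has $\frac{\omega_\varepsilon^k\wedge\omega^{n-k}}{\omega^n}=\binom{n}{k}^{-1}\sigma_k(\mu)$ and $\frac{\omega_\varepsilon^n}{\omega^n}=\sigma_n(\mu)=e^{u_\varepsilon}$, so Maclaurin's inequality $\big(\binom{n}{k}^{-1}\sigma_k\big)^{1/k}\geq\sigma_n^{1/n}$ yields $\omega_\varepsilon^k\wedge\omega^{n-k}\geq e^{ku_\varepsilon/n}\omega^n=e^{(k-n)u_\varepsilon/n}\omega_\varepsilon^n$. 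Since $k\leq n$ and $u_\varepsilon\leq C$ by \eqref{eq:uppu}, this upgrades to the pointwise bound $\omega_\varepsilon^k\wedge\omega^{n-k}\geq e^{(k-n)C/n}\omega_\varepsilon^n$. Wedging with $\chi$ and integrating (using $\omega_\varepsilon^n\wedge\chi>0$) gives $\int_X\omega_\varepsilon^k\wedge\omega^{n-k}\wedge\chi\geq e^{(k-n)C/n}\int_X\omega_\varepsilon^n\wedge\chi\geq0$. As $\omega_\varepsilon$ is cohomologous to $-\mathrm{Ric}(\omega)+\varepsilon\omega$ and $\omega,\chi$ are closed, the basic Stokes theorem (Lemma~\ref{basicstokes}) lets me replace $\omega_\varepsilon$ by $-\mathrm{Ric}(\omega)+\varepsilon\omega$ in these integrals; the resulting quantities are polynomials in $\varepsilon$, so letting $\varepsilon\to0$ and recalling that $-c_1(X/\mathcal{F})$ is represented by $(2\pi)^{-1}(-\mathrm{Ric}(\omega))$ produces \eqref{equit:nefl2}, the surviving positive powers of $2\pi$ (coming from $n-k\geq0$) only strengthening the left-hand side.

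The main obstacle is the uniform upper bound \eqref{eq:uppu}: the existence of $u_\varepsilon$ is immediate from Lemma~\ref{taubinyau}, but everything in part~\ref{it:nefL2} hinges on the constant $C$ being independent of $\varepsilon$, so that the bound survives the limit $\varepsilon\to0$. The delicate point is to ensure the comparison form $-\mathrm{Ric}(\omega)+\varepsilon\omega$ is positive precisely where it is used---at the maximum of $u_\varepsilon$, where it dominates $\omega_\varepsilon>0$---so that the determinant-monotonicity argument applies and yields a bound depending only on $\omega$ and $n$.
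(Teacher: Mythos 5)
Your proposal is correct and follows essentially the same route as the paper: reduce to the transverse Aubin--Yau theorem (Lemma~\ref{taubinyau}) after using transverse nefness to produce a positive representative of $[-\mathrm{Ric}(\omega)]+\varepsilon[\omega]$, compute $\mathrm{Ric}(\omega_\varepsilon)=-\omega_\varepsilon+\varepsilon\omega$, bound $\sup_X u_\varepsilon$ by the maximum principle, and then combine Maclaurin's inequality with the basic Stokes theorem and the limit $\varepsilon\to 0^+$ for part~(2). Your treatment is if anything slightly more careful than the paper's on two minor points (positivity of the comparison form at the maximum point, and the $2\pi$ normalization of $c_1$), but the argument is the same.
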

\begin{proof}
This is a transverse version of \cite[Proposition 8]{wuyauinventiones}. We adapt the idea in \cite{wuyauinventiones} to prove it.

We show Item \eqref{wuyauprop81}. For each $\varepsilon>0$, there exists a smooth basic function $f_\varepsilon\in C^\infty(X/\mathcal{F},\mathbb{R})$ such that
\begin{equation}
\label{omegafvarepsilon}
\omega_{f_\varepsilon}:=-\mathrm{Ric}(\omega)+\sqrt{-1}\partial\bar\partial f_\varepsilon+\varepsilon\omega>0
\end{equation}
since $K_{X/\mathcal{F}}$ is transverse nef.

Fix $\varepsilon>0$. It follows from Lemma \ref{taubinyau} that there exists a unique $v_\varepsilon\in C^\infty(X/\mathcal{F},\mathbb{R})$ such that
\begin{equation}
\label{taubinyauapp}
(\omega_{f_\varepsilon}+\sqrt{-1}\partial\bar\partial v_\varepsilon)^n=e^{v_\varepsilon+f_\varepsilon}\omega^n,\quad \omega_\varepsilon:=\omega_{f_\varepsilon}+\sqrt{-1}\partial\bar\partial v_\varepsilon>0.
\end{equation}
A direct calculation, together with \eqref{chernricciform}, \eqref{omegafvarepsilon} and \eqref{taubinyauapp}, yields that
\begin{align*}
\mathrm{Ric}(\omega_\varepsilon)
=&\mathrm{Ric}(\omega)
-\sqrt{-1}\partial\bar\partial f_\varepsilon
-\sqrt{-1}\partial\bar\partial v_\varepsilon\\
=&\varepsilon\omega-\omega_{f_\varepsilon}
-\sqrt{-1}\partial\bar\partial v_\varepsilon\\
=&-\omega_\varepsilon+\varepsilon\omega.
\end{align*}
We set $u_\varepsilon:=f_\varepsilon+v_\varepsilon$. It follows from the maximum principle, \eqref{omegafvarepsilon} and \eqref{taubinyauapp} that
\begin{equation*}
\sup_Xu_\varepsilon\leq C:=\log\left(\frac{(\varepsilon_0\omega-\mathrm{Ric}(\omega))^n}{\omega^n}\right),\quad \forall\;\varepsilon<\varepsilon_0,
\end{equation*}
which yields \eqref{eq:uppu}.

We show Item \eqref{it:nefL2}. Since $K_{X/\mathcal{F}}$ is transverse nef, it follows from Lemma \ref{basicstokes} and  \eqref{taubinyauapp} that
\begin{equation*}
\int_X\omega_{\varepsilon}^n\wedge\chi
=\int_X(-\mathrm{Ric}(\omega)+\varepsilon\omega)^n\wedge\chi>0,\quad \forall\;\varepsilon>0,
\end{equation*}
which yields that
\begin{equation}
\int_X\left(-c_1(X/\mathcal{F})\right)^n\wedge\chi=\lim_{\varepsilon \to 0^+}
\int_X\omega_{\varepsilon}^n\wedge\chi\geq 0.
\end{equation}
We set
$$
\sigma_k = \frac{ \omega_{\varepsilon}^k \wedge \omega^{n-k}}{\omega_\varepsilon^n}, \quad 0 \le k \le n.
$$
It follows from  Maclaurin's inequality that
$$
   \sqrt[k]{\sigma_k}\geq \sqrt[n]{\sigma_n},
$$
which, together with \eqref{eq:uppu} and \eqref{taubinyauapp}, yields that
\begin{equation}
\label{maclaurinapp}
\frac{\omega_\varepsilon^k\wedge\omega^{n-k}}{\omega_\varepsilon^n}
\geq \left(\frac{\omega_\varepsilon^n}{\omega^n}\right)^{\frac{k}{n}}
\left(\frac{\omega^n}{\omega_\varepsilon^{n}}\right)
=\left(e^{u_\varepsilon}\right)^{\frac{k}{n}-1}\geq e^{\frac{(k-n)C}{n}}.
\end{equation}
It follows from Lemma \ref{basicstokes}, \eqref{taubinyauapp} and \eqref{maclaurinapp} that
\begin{equation}
\label{maclaurinapp2}
\int_X(-\mathrm{Ric}(\omega)+\varepsilon\omega)^k\wedge\omega^{n-k}\wedge\chi
\geq  e^{\frac{(k-n)C}{n}}\int_X(-\mathrm{Ric}(\omega)+\varepsilon\omega)^n\wedge\chi,
\quad \forall\;\varepsilon>0.
\end{equation}
Letting $\varepsilon\to 0^+$ in \eqref{maclaurinapp2} shows \eqref{equit:nefl2}.
\end{proof}

\begin{lem}
\label{mainthmtosattiyangjdg}
Let $(X,\mathcal{F})$ be a closed oriented$,$ taut$,$ transverse K\"ahler foliated manifold$,$ where $\mathcal{F}$ is the foliation with complex codimension $n$ and $\omega$ is a transverse K\"ahler metric with nonpositive transverse holomorphic sectional curvature. Then the normal canonical bundle $K_{X/\mathcal{F}}$ is transverse nef.
\end{lem}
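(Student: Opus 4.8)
The plan is to prove that the class $[-\mathrm{Ric}(\omega)]=2\pi c_1(K_{X/\mathcal{F}})$ lies in the closure of the transverse K\"ahler cone, which by Lemma \ref{nef2} (equivalently by the characterization of transverse nefness established in its proof) is exactly the assertion that $K_{X/\mathcal{F}}$ is transverse nef. Concretely I would show that for every $\varepsilon>0$ the class $[\varepsilon\omega-\mathrm{Ric}(\omega)]$ is transverse K\"ahler, and then let $\varepsilon\to 0^{+}$. Set
\[
I:=\{\varepsilon>0:\ [\varepsilon\omega-\mathrm{Ric}(\omega)]\in\mathcal{K}_{X/\mathcal{F}}\}.
\]
Since $\mathcal{K}_{X/\mathcal{F}}$ is an open convex cone containing $[\omega]$, the set $I$ is open, it contains all sufficiently large $\varepsilon$ (because $\tfrac{1}{\varepsilon}[-\mathrm{Ric}(\omega)]+[\omega]\to[\omega]\in\mathcal{K}_{X/\mathcal{F}}$ and $\mathcal{K}_{X/\mathcal{F}}$ is open), and it is an up-set: if $\varepsilon\in I$ and $\varepsilon'>\varepsilon$ then $[\varepsilon'\omega-\mathrm{Ric}(\omega)]=[\varepsilon\omega-\mathrm{Ric}(\omega)]+(\varepsilon'-\varepsilon)[\omega]$ is a sum of two transverse K\"ahler classes, hence transverse K\"ahler. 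Thus $I=(\varepsilon_*,\infty)$ for $\varepsilon_*:=\inf I\geq 0$, and it suffices to rule out $\varepsilon_*>0$.

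Suppose for contradiction that $\varepsilon_*>0$. For each $\varepsilon\in I$ the class $[\varepsilon\omega-\mathrm{Ric}(\omega)]$ contains a transverse K\"ahler form, so by Lemma \ref{taubinyau} one can solve the associated transverse Monge--Amp\`ere equation and produce a transverse K\"ahler metric $\omega_\varepsilon\in[\varepsilon\omega-\mathrm{Ric}(\omega)]$ with
\[
\mathrm{Ric}(\omega_\varepsilon)=-\omega_\varepsilon+\varepsilon\omega ;
\]
indeed, applying $-\mathrm{Ric}$ shows the two sides represent the same class, and by \eqref{chernricciform} the condition reduces to the Aubin--Yau form solved in Lemma \ref{taubinyau}. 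Now take $\varepsilon_j\downarrow\varepsilon_*$ with $\varepsilon_j\in I$ and $\omega_j:=\omega_{\varepsilon_j}$. Applying Lemma \ref{lemwuyauinventioneprop9} with $\hat\omega=\omega_j$, $\kappa=0$ (nonpositive transverse holomorphic sectional curvature), $\lambda=1$ and $\nu=\varepsilon_j>0$ gives
\[
\mathrm{tr}_{\omega_j}\omega\leq \frac{n}{\varepsilon_j}\leq\frac{n}{\varepsilon_*},
\]
hence the uniform lower bound $\omega_j\geq (\varepsilon_*/n)\,\omega$. For the upper bound I would apply $\Delta_\omega$ to $G_j:=\log(\omega_j^n/\omega^n)$; since $\sqrt{-1}\partial\bar\partial G_j=\omega_j-\varepsilon_j\omega+\mathrm{Ric}(\omega)$, the maximum principle bounds $\mathrm{tr}_\omega\omega_j$ at the maximum of $G_j$, and the arithmetic--geometric mean inequality then yields a uniform volume bound $\omega_j^n\leq C\,\omega^n$. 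Combining the lower eigenvalue bound with this volume bound gives two-sided estimates $c\,\omega\leq\omega_j\leq C\,\omega$, so the equation is uniformly transverse elliptic and the Evans--Krylov and Schauder estimates used in Lemma \ref{taubinyau} furnish uniform $C^\infty$ bounds. A subsequence of $\omega_j$ then converges to a smooth transverse K\"ahler metric $\omega_*\geq (\varepsilon_*/n)\,\omega>0$ lying in the limit class $[\varepsilon_*\omega-\mathrm{Ric}(\omega)]$, so $\varepsilon_*\in I$; as $I$ is open this contradicts $\varepsilon_*=\inf I$.

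Therefore $\varepsilon_*=0$, i.e. $[\varepsilon\omega-\mathrm{Ric}(\omega)]\in\mathcal{K}_{X/\mathcal{F}}$ for every $\varepsilon>0$, which gives representatives satisfying $-\mathrm{Ric}(\omega)+\sqrt{-1}\partial\bar\partial\varphi_\varepsilon\geq-\varepsilon\omega$; by \eqref{defntnef} this is precisely transverse nefness of $K_{X/\mathcal{F}}$. The main obstacle is the a priori second-order estimate forcing $\omega_\varepsilon$ to remain uniformly positive, which is exactly where the curvature hypothesis is used; fortunately this Schwarz-type estimate, via Yau's Schwarz lemma and Royden's trick, is already isolated in Lemma \ref{lemwuyauinventioneprop9}. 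I note that the bound $n/\varepsilon$ degenerates as $\varepsilon\to 0$, reflecting that $K_{X/\mathcal{F}}$ is only nef and not necessarily transverse K\"ahler; the strict positivity $\nu=\varepsilon>0$ is essential for Lemma \ref{lemwuyauinventioneprop9} to apply, while the remaining $C^0$, volume and higher-order estimates are routine adaptations of the transverse Aubin--Yau theory developed in Lemma \ref{taubinyau}.
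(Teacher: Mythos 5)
Your proposal is correct and follows essentially the same route as the paper: both argue by contradiction at the threshold value of $\varepsilon$ (your $\varepsilon_*$, the paper's $\varepsilon_0$ for which $\varepsilon_0[\omega]-2\pi c_1(X/\mathcal{F})$ is transverse nef but not K\"ahler), solve the twisted Monge--Amp\`ere equations via Lemma \ref{taubinyau}, invoke Lemma \ref{lemwuyauinventioneprop9} with $\kappa=0$ and $\nu$ uniformly bounded below by the threshold to get the key trace bound, and then combine the volume bound from the maximum principle with Evans--Krylov/Schauder and Ascoli--Arzel\`a to produce a transverse K\"ahler metric in the threshold class, contradicting its non-K\"ahlerness. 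The only difference is bookkeeping: you organize the threshold via the open up-set $I$ rather than citing Lemma \ref{wuyauinventioneprop8} for the twisted class, which is a harmless repackaging of the same estimates.
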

\begin{proof}
This is a transverse version of \cite[Theorem 1.1]{tosattiyangjdg} and we adapt the idea from \cite{tosattiyangjdg} to show it. If $K_{X/\mathcal{F}}$ is not transverse nef, then there exists $\varepsilon_0>0$ such that the class
$\varepsilon_0[\omega]-2\pi c_1(X/\mathcal{F})$ is transverse nef but not K\"ahler. As in the proof of Lemma \ref{thmwuyauinventionethm7}, it follows from Item \eqref{wuyauprop81} of Lemma \ref{wuyauinventioneprop8} that
for each $\varepsilon>0$ there exists transverse K\"ahler metric $\omega_{\varepsilon} := \varepsilon \omega -\mathrm{Ric}(\omega) + \sqrt{-1}\partial\bar\partial u_\varepsilon$ satisfying $\omega^n_{\varepsilon} =  e^{u_{\varepsilon}} \omega^n$
and
\begin{equation}
\label{ricomegavarepsilon2}
\textup{Ric}(\omega_{\varepsilon}) = - \omega_{\varepsilon} + (\varepsilon+\varepsilon_0) \omega, \quad \max_X u_{\varepsilon} \le C,\quad C^{-1}\omega\leq \omega_\varepsilon\leq C\omega
\end{equation}
where $C=C(\omega,n,\varepsilon_0)>0$ is a constant independent of $\varepsilon$.
Here we should use \eqref{upperbdttrace} with $\kappa=0,\lambda=1,\nu=(\varepsilon+\varepsilon_0)$.

As in the proof of Lemma \ref{thmwuyauinventionethm7},  there still holds
\begin{equation}
\label{tgaojieguji2}
\|\omega_\varepsilon\|_{C^k(X,\omega)}\leq C_k,
\end{equation}
where $C_k>0$ is a constant independent of $\varepsilon$.

It follows from \eqref{ricomegavarepsilon2},   \eqref{tgaojieguji2},   the Ascoli-Arzel\`a theorem and a diagonal argument that  there exists a sequence $\{ \varepsilon_i \}$ with $\lim_{i\to\infty}\varepsilon_i=0$ such that $\omega_{\varepsilon_i}$ converge smoothly to a transverse K\"ahler metric $\omega_0$ which satisfies
$$
[\omega_0]=\varepsilon_0[\omega]-2\pi c_1(X/\mathcal{F}),
$$
which contradicts to the fact that $\varepsilon_0[\omega]-2\pi c_1(X/\mathcal{F})$ is not transverse K\"ahler, as desired.
\end{proof}

\begin{lem}
\label{thmwuyauinventionethm7}
Let $(X,\mathcal{F})$ be a closed oriented$,$ taut$,$ transverse K\"ahler foliated manifold$,$ where $\mathcal{F}$ is the foliation with complex codimension $n,$ and $\omega$ denote the transverse K\"ahler metric  with negative transverse holomorphic sectional curvature with upper bound $-\kappa<0$. Then there exists a smooth basic function $u \in C^\infty(X/\mathcal{F},\mathbb{R})$ such that $ \omega_u :=-\mathrm{Ric}(\omega)+\sqrt{-1}\partial\bar\partial u$ is the transverse K\"ahler-Einstein metric  with $\mathrm{Ric}(\omega_u)=-\omega_u$.
\end{lem}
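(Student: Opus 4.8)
The plan is to adapt the limiting argument of Wu--Yau \cite{wuyauinventiones} and Tosatti--Yang \cite{tosattiyangjdg}: I would construct the transverse K\"ahler--Einstein metric as a smooth limit, as $\varepsilon\to 0^+$, of the auxiliary metrics $\omega_\varepsilon$ supplied by Lemma \ref{wuyauinventioneprop8}. Since negative transverse holomorphic sectional curvature is in particular nonpositive, Lemma \ref{mainthmtosattiyangjdg} shows that $K_{X/\mathcal{F}}$ is transverse nef, so Item \eqref{wuyauprop81} of Lemma \ref{wuyauinventioneprop8} applies. Thus for each $\varepsilon>0$ there is a basic function $u_\varepsilon\in C^\infty(X/\mathcal{F},\mathbb{R})$ with
\begin{equation*}
\omega_\varepsilon:=-\mathrm{Ric}(\omega)+\sqrt{-1}\partial\bar\partial u_\varepsilon+\varepsilon\omega>0,\quad \omega_\varepsilon^n=e^{u_\varepsilon}\omega^n,\quad \mathrm{Ric}(\omega_\varepsilon)=-\omega_\varepsilon+\varepsilon\omega,
\end{equation*}
together with the uniform upper bound $\sup_X u_\varepsilon\le C$ from \eqref{eq:uppu}. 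The whole point is to show that $\{\omega_\varepsilon\}$ enjoys a priori estimates uniform in $\varepsilon$, so that a subsequence converges to a transverse K\"ahler metric $\omega_u=-\mathrm{Ric}(\omega)+\sqrt{-1}\partial\bar\partial u$ with $\mathrm{Ric}(\omega_u)=-\omega_u$.

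The crucial step, and the one that genuinely uses the strict negativity $-\kappa<0$, is a uniform $C^2$-estimate coming from the transverse Schwarz lemma. I would apply Lemma \ref{lemwuyauinventioneprop9} with $\hat\omega=\omega_\varepsilon$, $\lambda=1$ and $\nu=\varepsilon$, since the curvature relation above gives exactly $\mathrm{Ric}(\omega_\varepsilon)\ge -\omega_\varepsilon+\varepsilon\omega$. Inequality \eqref{upperbdttrace} then yields
\begin{equation*}
\sup_X \mathrm{tr}_{\omega_\varepsilon}\omega\le \frac{1}{\frac{n+1}{2n}\kappa+\frac{\varepsilon}{n}}\le \frac{2n}{(n+1)\kappa},
\end{equation*}
which is a bound independent of $\varepsilon$ precisely because $\kappa>0$. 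This trace bound forces a uniform lower bound $\omega_\varepsilon\ge c_0\omega$; feeding this into the Monge--Amp\`ere equation $\omega_\varepsilon^n=e^{u_\varepsilon}\omega^n$ together with $\sup_X u_\varepsilon\le C$ bounds each eigenvalue of $\omega_\varepsilon$ with respect to $\omega$ from above as well, giving a two-sided uniform ellipticity $c_0\omega\le \omega_\varepsilon\le C_1\omega$ and, in turn, a uniform $C^0$-bound $\|u_\varepsilon\|_{C^0}\le C$ (the lower bound on $u_\varepsilon$ following from $\omega_\varepsilon\ge c_0\omega$ and the equation).

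With uniform ellipticity in hand, I would invoke the transverse Evans--Krylov and Schauder theory used in the proof of Lemma \ref{taubinyau} (see \cite{evanscpam1982,krylov1982,trudingertransactionams1983,gt1998}) to upgrade to uniform basic estimates $\|u_\varepsilon\|_{C^k(X,\omega)}\le C_k$ for every $k$, with $C_k$ independent of $\varepsilon$. The Ascoli--Arzel\`a theorem and a diagonal argument then produce a sequence $\varepsilon_i\to 0^+$ along which $u_{\varepsilon_i}\to u$ in $C^\infty(X/\mathcal{F},\mathbb{R})$; by the uniform ellipticity the limit $\omega_u=-\mathrm{Ric}(\omega)+\sqrt{-1}\partial\bar\partial u$ satisfies $c_0\omega\le \omega_u\le C_1\omega$ and is therefore a genuine transverse K\"ahler metric. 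Passing to the limit in $\mathrm{Ric}(\omega_{\varepsilon_i})=-\omega_{\varepsilon_i}+\varepsilon_i\omega$ gives $\mathrm{Ric}(\omega_u)=-\omega_u$, as desired.

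The main obstacle is the uniform trace estimate: everything downstream is elliptic bootstrapping, but the non-degeneracy of the limiting metric hinges on the $\varepsilon$-independence of the bound on $\mathrm{tr}_{\omega_\varepsilon}\omega$, which is exactly what the strictly negative transverse holomorphic sectional curvature ($\kappa>0$) provides through Lemma \ref{lemwuyauinventioneprop9}. One also has to check throughout that all objects remain basic, so that the estimates take place in the basic function spaces $C^k(X/\mathcal{F},\mathbb{R})$ and the limit $u$ is again basic; this is automatic since the construction of $\omega_\varepsilon$ and the cited transverse estimates are carried out in the basic category.
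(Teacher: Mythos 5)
Your proposal is correct and follows essentially the same route as the paper: nefness via Lemma \ref{mainthmtosattiyangjdg}, the family $\omega_\varepsilon$ from Lemma \ref{wuyauinventioneprop8}, the $\varepsilon$-independent bound on $\mathrm{tr}_{\omega_\varepsilon}\omega$ from Lemma \ref{lemwuyauinventioneprop9} with $\lambda=1$, $\nu=\varepsilon$ (which is where $\kappa>0$ enters), the resulting two-sided equivalence $C^{-1}\omega\le\omega_\varepsilon\le C\omega$ and uniform $C^0$ bound on $u_\varepsilon$, and finally higher-order estimates plus Ascoli--Arzel\`a to pass to the limit. The only cosmetic difference is that you extract the lower bound $\omega_\varepsilon\ge c_0\omega$ directly from the trace bound and then bound the eigenvalues above via the Monge--Amp\`ere equation, whereas the paper packages the same step as the elementary inequality $\mathrm{tr}_{\omega}\omega_\varepsilon\le\frac{1}{(n-1)!}(\mathrm{tr}_{\omega_\varepsilon}\omega)^{n-1}\frac{\omega_\varepsilon^n}{\omega^n}$; these are equivalent.
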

\begin{proof}
This is a transverse version of \cite[Theorem 9]{wuyauinventiones} and we adapt the idea from \cite{wuyauinventiones} to show it.

It follows from Lemma \ref{wuyauinventioneprop8}\eqref{wuyauprop81} and Lemma \ref{mainthmtosattiyangjdg} that
for each $\varepsilon>0$ there exists transverse K\"ahler metric $\omega_{\varepsilon} := \varepsilon \omega -\mathrm{Ric}(\omega) + \sqrt{-1}\partial\bar\partial u_\varepsilon$ satisfying $\omega^n_{\varepsilon} =  e^{u_{\varepsilon}} \omega^n$
and
\begin{equation}
\label{ricomegavarepsilon}
\textup{Ric}(\omega_{\varepsilon}) = - \omega_{\varepsilon} + \varepsilon \omega, \quad \max_X u_{\varepsilon} \le C,
\end{equation}
where $C=C(\omega,n)>0$ is a constant independent of $\varepsilon$.

It follows from \eqref{upperbdttrace} with $\lambda=1,\nu=\varepsilon$ and \eqref{ricomegavarepsilon} that
\begin{equation}
\label{tromegavarepsilonomega}
\mathrm{tr}_{\omega_\varepsilon}\omega\leq \frac{2n}{(n+1)\kappa}.
\end{equation}
The uniform upper bound for $u_\varepsilon<C$ yields that
\begin{equation}\label{tromegavarepsilonomega1}
\sup_X\frac{\omega_\varepsilon^n}{\omega^n}\leq C.
\end{equation}
 Thanks to \eqref{tromegavarepsilonomega}, \eqref{tromegavarepsilonomega1} and the elementary inequality
$$\mathrm{tr}_{\omega}{\omega_\varepsilon}\leq \frac{1}{(n-1)!}(\mathrm{tr}_{\omega_\varepsilon}{\omega})^{n-1}\frac{\omega_\varepsilon^n}{\omega^n},$$
we can deduce that
\begin{equation}
\label{tromegaomegavarepsilon}
\sup_X\mathrm{tr}_{\omega}{\omega_\varepsilon}\leq C,
\end{equation}
where $C=C(\omega,n,\kappa)>0$ is a constant independent of $\varepsilon$.

It follows from \eqref{tromegavarepsilonomega} and \eqref{tromegaomegavarepsilon} that
\begin{equation}
C^{-1}\omega\leq \omega_\varepsilon\leq C\omega,
\end{equation}
which yields that
\begin{equation}
\label{omegaomegavarepsilondengjia}
\inf_Xu_\varepsilon\geq -C,
\end{equation}
where $C=C(\omega,n,\kappa)>0$ is a constant independent of $\varepsilon$.

We claim
\begin{equation}
\label{tgaojieguji}
\|\omega_\varepsilon\|_{C^k(X,\omega)}\leq C_k,
\end{equation}
where $C_k>0$ is a constant independent of $\varepsilon$.
Indeed,  in the distinguished chart $$(U;t^1,\cdots,t^k,z^1,\cdots,z^n),$$ we work with the qualities of $(z^1,\cdots,z^n).$ Hence \eqref{tdeltatrace} follows from the same argument in \cite{tosattiyangjdg} following the work of Yau \cite{yau1978}.

It follows from \eqref{omegaomegavarepsilondengjia},   \eqref{tgaojieguji},   the Ascoli-Arzel\`a theorem and a diagonal argument that  there exists a sequence $\{u_{\varepsilon_i}\}$ with $\lim_{i\to\infty}\varepsilon_i=0$ converge smoothly to a smooth basic function $u\in C^\infty(X/\mathcal{F},\mathbb{R})$ which satisfies
\begin{equation*}
\left(-\mathrm{Ric}(\omega) + \sqrt{-1}\partial\bar\partial u\right)^n =  e^{u } \omega^n,\quad -\mathrm{Ric}(\omega) + \sqrt{-1}\partial\bar\partial u>0.
\end{equation*}
This shows that $\omega_u:=-\mathrm{Ric}(\omega) + \sqrt{-1}\partial\bar\partial u>0$ satisfies $\mathrm{Ric}(\omega_u)=-\omega_u$, as required.
\end{proof}

Now we finish the proofs of Theorems \ref{tmainthmtyjdg} and \ref{mainthm2}.

\begin{proof}[Proof of Theorem \ref{tmainthmtyjdg}]
The conclusion follows from Lemmas \ref{mainthmtosattiyangjdg} and \ref{thmwuyauinventionethm7}.
\end{proof}

\begin{proof}[Proof of Theorem \ref{mainthm2}]
It follows from Theorem \ref{tmainthmtyjdg} that there exists a transverse K\"ahler metric $\tilde{\omega}$ such that $2\pi c_1(X/\mathcal{F})=[\mathrm{Ric}(\tilde{\omega})]=[-\tilde{\omega}].$ Since $c_1(T_{\mathcal{F}})=0$, one infers from the adjunction formula that
$$
2\pi c_1(X)=2\pi c_1(X/\mathcal{F})=[-\tilde{\omega}].
$$
Hence $c_1(X)$ is represented by a semi-negative closed real $(1,1)$ form and $c_1(X)^n\not=0.$ Then Theorem \ref{mainthm2} follows from \cite[Theorem 1.2]{touzettoulouse2010}.
\end{proof}
\begin{appendix}
\section{Preliminaries for Distribution and Current}
\label{section:appendix}
In this appendix, we collect preliminaries for distribution and current in order to deduce the transverse versions of the Poincar\'e Lemma and  the Dolbeault-Grothendieck lemma.

Let $X$ be a smooth oriented  differential manifold with $\dim_{\mathbb{R}}X=n$ and countable base.  We first introduce a topology on the space of differential forms $C^s\left(X,\bigwedge^pT_X^*\right)$. Let $U\subset X$ be a coordinate open set and $u$ a $p$ form on $X$, written $u(x)= \sum_{i_1<\cdots<i_p} u_I(x)\mathrm{d} x^I$ on $U$ with multi-indices of the type $I=(i_1,\cdots,i_p)$. To every compact subset $K\subset U$ and every integer $s\in\mathbb{N}$, we associated a semi-norm
\begin{equation}
\label{equ1-2.1}
p_K^s(u)=\sup_{x\in K}\max_{|I|=p,\,|\alpha|\leq s}\left|\partial^\alpha u_I(x)\right|,
\end{equation}
where $\alpha=(\alpha_1,\cdots,\alpha_n)$ runs over $\mathbb{N}^n$ and $\partial^\alpha=\partial^{|\alpha|}/\partial (x^1)^{\alpha_1}\cdots\partial (x^n)^{\alpha_n}$ is a derivation of order $|\alpha|=\alpha_1+\cdots+\alpha_n$.
\begin{defn}
\label{defn1-2.2}
We introduce as follows spaces of $p$ forms on manifolds.
\begin{enumerate}
\item\label{defn1-2.2a}We denote by $\mathscr{E}^p(X)$ $($resp. ${}^s\mathscr{E}^p(X)$$)$ the space $C^\infty(X,\bigwedge^p T_X^*)$ $($resp. the space $C^s(X,\bigwedge^pT_X^*)$$)$, equipped with the topology defined by all semi-norms $p_L^s$ when $s,\,L,\,U$ vary $($resp. when $L,\,U$ vary$)$. We use the notation $\mathscr{E}(X):=\mathscr{E}^0(X)$ and ${}^s\mathscr{E}(X):={}^s\mathscr{E}^0(X)$.
\item\label{defn1-2.2b}If $K\subset X$ is a compact subset$,$ $\mathscr{D}^p(K)$ will denote the subspace of elements $u\in\mathscr{E}^p(X)$ with support contained in $K$$,$ together with the induced topology$;$ $\mathscr{D}^p(X)$ will stand for the set of all elements with compact support$,$ i.e.$,$ $\mathscr{D}^p(X):=\bigcup_{K}\mathscr{D}^p(K)$. We use the notation $\mathscr{D}(X):=\mathscr{D}^0(X),\;\mathscr{D}(K):=\mathscr{D}^0(K)$.
\item\label{defn1-2.2c}The spaces of $C^s$ forms ${}^s\mathscr{D}^p(X)$ and ${}^s\mathscr{D}^p(X)$ are defined similarly. We use the notation ${}^s\mathscr{D}(X):={}^s\mathscr{D}^0(X),\;{}^s\mathscr{D}(K):={}^s\mathscr{D}^0(K)$.
\end{enumerate}
\end{defn}
Since $X$ is separable, the topology of $\mathscr{E}^p(X)$ can be defined
by means of a countable set of semi-norms $p_K^s$, hence
$\mathscr{E}^p(X)$ (and likewise
${}^s\mathscr{E}^p(X)$ is a
Fr\'echet space. The topology of
${}^s\mathscr{D}^p(K)$ is induced by any finite set of semi-norms $p_{K_j}^s$
such that the compact sets $K_j$ cover $K$; hence
${}^s\mathscr{D}^p(K)$ is a Banach space. It should be
observed however that $\mathscr{D}^p(X)$ is not a Fr\'echet space; in fact $\mathscr{D}^p(X)$ is dense in $\mathscr{E}^p(X)$
and thus non complete for the induced topology.
\begin{defn}
Let $X$ be a complex manifold with $\dim_{\mathbb{C}}X=n$ and countable base. Then  there are decompositions
\begin{equation*}
\mathscr{D}^k(X,\mathbb{C})=\bigoplus_{p+q=k}\mathscr{D}^{p,q}(X,\mathbb{C}),\quad
\mathscr{D}'_k(X,\mathbb{C})=\bigoplus_{p+q=k}\mathscr{D}'_{p,q}(X,\mathbb{C}).
\end{equation*}
The space $\mathscr{D}'_{p,q}(X,\mathbb{C})$ is called the space of currents of bidimension $(p,q)$ and bidegree
$(n-p,n-q)$ on $X$, and is also denoted $\mathscr{D}'^{n-p,n-q}(X,\mathbb{C})$.
\end{defn}
\subsection{Distribution}
A distribution on $U\subset\mathbb{R}^n$ is a continuous linear form on $\mathscr{D}(U)$.
\begin{defn}
\label{romanddefn2.3.2}
A distribution on $U\subset\mathbb{R}^n$ is a linear form $u$ on $\mathscr{D}(U)$ such that for each compact set $K\subset U$ there exist constants $C$ and $s\in \mathbb{N}$ such that
\begin{equation*}
|u(\varphi)|\leq Cp_K^s(\varphi),\quad\forall\;\varphi\in\mathscr{D}(K).
\end{equation*}
The set of all distributions in $U$ is denoted by $\mathscr{D}'(U)$.
\end{defn}
We also use the notation $\langle u,\varphi\rangle:= u(\varphi),\; \forall\;\varphi\in\mathscr{D}(U)$.
The continuity condition in Definition \ref{romanddefn2.3.2} is often stated as a sequential continuity.
\begin{thm}
Let $U\subset\mathbb{R}^n$ be an open set, and $u$   a complex valued linear form on $\mathscr{D}(U)$. Then $u$ is a distribution if and only
$u(\varphi_j)\to 0$ when $j\to\infty$ for every sequence $(\varphi_j)_{j\in\mathbb{N}^*}$ converging to $0$ in the sense that $p_K^s(\varphi_j)\to 0$ as $j\to \infty$ for each fixed $s$ and $\mathrm{supp}\varphi_j\subset K$ for all $j$ with $K\subset U$ compact$;$
if and only if there exist continuous functions $\rho_\alpha$ for each $\alpha\in\mathbb{N}^n$ such that
$$
|u(\varphi)|\leq \sum_{\alpha\in\mathbb{N}^n} |\rho_\alpha \partial^\alpha\varphi|,\quad \forall\;\varphi\in\mathscr{D}(U),
$$
and on each compact set $K\subset U$ all but finite number of the functions $\rho_\alpha$ vanish identically.
\end{thm}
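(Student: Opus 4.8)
The plan is to establish the equivalence of three conditions on the linear form $u$: (i) the seminorm estimate of Definition \ref{romanddefn2.3.2}; (ii) sequential continuity, i.e.\ $u(\varphi_j)\to 0$ whenever $\varphi_j\to 0$ in the stated sense; and (iii) the local representation by continuous functions $\rho_\alpha$. I would prove the cycle (i)$\Rightarrow$(ii)$\Rightarrow$(i) together with the equivalence (i)$\Leftrightarrow$(iii), of which two implications are routine and one carries the real content.

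The easy directions are (i)$\Rightarrow$(ii) and (iii)$\Rightarrow$(i). For the first, a null sequence $\varphi_j$ has all supports in a single compact $K$, so Definition \ref{romanddefn2.3.2} furnishes $C,s$ with $|u(\varphi_j)|\le C\,p_K^s(\varphi_j)\to 0$. For the second, restricting to $\varphi\in\mathscr{D}(K)$ the local finiteness of $\{\rho_\alpha\}$ leaves only finitely many nonzero terms on $K$, each dominated by $\big(\sup_K|\rho_\alpha|\big)\,p_K^{|\alpha|}(\varphi)$, so that their sum is a bound of the form $C\,p_K^s(\varphi)$. The converse (ii)$\Rightarrow$(i) I would handle by contraposition: if (i) fails at some compact $K$, then for each choice $C=s=j$ there is $\varphi_j\in\mathscr{D}(K)$ with $|u(\varphi_j)|>j\,p_K^j(\varphi_j)$; necessarily $p_K^j(\varphi_j)>0$ (else $\varphi_j\equiv0$), so the rescaled functions $\psi_j:=\varphi_j/\big(j\,p_K^j(\varphi_j)\big)$ satisfy $p_K^s(\psi_j)\le p_K^j(\psi_j)=1/j\to 0$ for every fixed $s$ once $j\ge s$, whence $\psi_j\to 0$ while $|u(\psi_j)|>1$, contradicting (ii).

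The substantive step, and the one I expect to be the main obstacle, is (i)$\Rightarrow$(iii), where one must produce genuinely continuous weights $\rho_\alpha$ that are locally finite on $U$. I would fix a compact exhaustion $K_1\subset K_2\subset\cdots$ of $U$ together with a smooth locally finite partition of unity $\{\chi_j\}$ subordinate to the annuli $\mathrm{int}(K_{j+1})\setminus K_{j-1}$, and decompose $\varphi=\sum_j\chi_j\varphi$ (a finite sum on any compact set). On each patch $L_j:=\mathrm{supp}\,\chi_j$ the functional $\varphi\mapsto u(\chi_j\varphi)$ inherits a seminorm bound from (i), the derivatives of $\chi_j$ being absorbed via the Leibniz rule. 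The key analytic device is the fundamental theorem of calculus, $\partial^\alpha\varphi(x)=\int_{-\infty}^{x_1}\!\cdots\int_{-\infty}^{x_n}\partial_1\cdots\partial_n\partial^\alpha\varphi$, which bounds each $\sup_{L_j}|\partial^\alpha\varphi|$ by an $L^1$-norm of a one-higher derivative; feeding this into the patch estimate and applying a Hahn--Banach extension followed by the Riesz representation of $(L^1)^\ast$ expresses $u(\chi_j\varphi)$ through integrals against bounded densities, which a final antidifferentiation upgrades to continuous functions supported in $L_j$. Summing over $j$ and invoking local finiteness then assembles the global $\rho_\alpha$ with the required estimate. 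The delicate points are ensuring the representing objects are honestly continuous rather than merely measures or $L^\infty$ functions, which is precisely what the integration step secures, and controlling supports so that on every compact subset of $U$ all but finitely many $\rho_\alpha$ vanish identically.
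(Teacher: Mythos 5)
The paper does not actually prove this statement: its ``proof'' is the single line ``See H\"ormander, Theorems 2.1.4 and 2.1.5'', so you are supplying an argument where the paper supplies none. Your treatment of the equivalence between the seminorm definition and sequential continuity is complete and correct; the contrapositive with the rescaling $\psi_j=\varphi_j/\bigl(j\,p_K^j(\varphi_j)\bigr)$ is exactly the standard argument, and the implication from the existence of the $\rho_\alpha$ back to the seminorm bounds is also fine.

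The soft spot is the direction that produces the $\rho_\alpha$. Your opening moves (partition of unity $\{\chi_j\}$ on a compact exhaustion, Leibniz to pass from $\partial^\alpha(\chi_j\varphi)$ to $\partial^\alpha\varphi$ on $L_j$) are right, but the machinery you then invoke both overshoots and breaks down. The Hahn--Banach/Riesz step proves a local structure theorem ($u$ as a sum of derivatives of $L^\infty$ densities), which is more than the stated inequality requires; worse, the ``final antidifferentiation'' does not produce continuous functions \emph{supported in} $L_j$ --- the $n$-fold antiderivative of a density supported in $L_j$ is in general nonzero far outside $L_j$ --- so neither the convergence of the sum over $j$ nor the local finiteness of the assembled family $\{\rho_\alpha\}$ is secured. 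None of this is needed: from $|u(\chi_j\varphi)|\le C_j\sum_{|\alpha|\le k_j}\sup_{L_j}|\partial^\alpha\varphi|$ one can conclude directly. Pick continuous bumps $\psi_j$ with $\psi_j\ge 1$ on $L_j$ and $\{\mathrm{supp}\,\psi_j\}$ locally finite, set $\rho_\alpha:=\sum_{j\,:\,k_j\ge|\alpha|}2^{j}C_j\psi_j$ (a locally finite, hence continuous, sum), and use
\[
\sum_j C_j b_j\le\Bigl(\sup_j 2^jC_jb_j\Bigr)\sum_j2^{-j}\le\sup_j 2^jC_jb_j,\qquad b_j:=\sup_{L_j}|\partial^\alpha\varphi|,
\]
together with $\sup|\rho_\alpha\,\partial^\alpha\varphi|\ge 2^jC_j\sup_{L_j}|\partial^\alpha\varphi|$ for every $j$ with $k_j\ge|\alpha|$. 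A compact $K$ meets only finitely many $\mathrm{supp}\,\psi_j$, so only the $\alpha$ with $|\alpha|\le\max k_j$ over those $j$ give a $\rho_\alpha$ not identically zero on $K$, which is the required local finiteness. (Your FTC idea can be salvaged, but it must be applied to the compactly supported function $\partial^\alpha(\chi_j\varphi)$ rather than to $\partial^\alpha\varphi$ on $L_j$ --- otherwise the resulting $L^1$-norm is not localized --- and one should then stop at the weighted $L^1$ bound $\sum_j\int_{L_j}(\cdots)=\int\bigl(\sum_j\cdots\bigr)$ and majorize by a single weighted sup, instead of antidifferentiating.)
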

\begin{proof}
See \cite[Theorem 2.1.4 \& 2.1.5]{hormander}.
\end{proof}
A distribution is determined by the restriction to the sets in an open covering.
\begin{thm}
\label{hormanderthm2.2.1-2.2.4}
Let $U\subset\mathbb{R}^n$ and $u\in\mathscr{D}'(U)$. Then we have
\begin{enumerate}
\item If for each point of $U$ there exists a neighborhood to which the restriction of $u$ is  $0,$ then $u=0$.
\item Let $\{U_i\}_{i\in I}$ be an arbitrary open covering of $U$. Then if $u_i\in\mathscr{D}'(U_i)$ and $u_i=u_j$ on $U_i\cap U_j$ for all $i,j\in I,$ then
    there exists one and only one $u\in\mathscr{D}'(U)$ such that $u_i$ is the restriction of $u$ to $U_i$ for each $i\in I.$
\end{enumerate}
\end{thm}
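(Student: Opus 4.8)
The plan is to prove both statements by the standard sheaf-theoretic argument for distributions, whose only real ingredient is a partition of unity subordinate to an open covering together with the sequential continuity characterization recorded after Definition \ref{romanddefn2.3.2}.

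For part (1), I would begin with an arbitrary test function $\varphi\in\mathscr{D}(U)$ and note that its compact support $K=\mathrm{supp}\,\varphi$ is covered by finitely many of the neighborhoods on which $u$ restricts to zero. Choosing a finite partition of unity $\{\chi_j\}$ on a neighborhood of $K$ subordinate to this finite subcover, I decompose $\varphi=\sum_j\chi_j\varphi$, where each $\chi_j\varphi$ is supported in one of the given neighborhoods. Since $u$ restricts to zero on each such neighborhood, $\langle u,\chi_j\varphi\rangle=0$ for every $j$, and summing the finitely many terms gives $\langle u,\varphi\rangle=0$. As $\varphi$ was arbitrary, $u=0$.

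For part (2), uniqueness is immediate from part (1): if two distributions $u,u'$ both restrict to $u_i$ on every $U_i$, then $u-u'$ restricts to zero on each member of the covering, hence vanishes. For existence I would fix a locally finite partition of unity $\{\chi_j\}$ subordinate to $\{U_i\}$, so that for each $j$ there is an index $i(j)$ with $\mathrm{supp}\,\chi_j\subset U_{i(j)}$, and set
\begin{equation*}
\langle u,\varphi\rangle:=\sum_j\langle u_{i(j)},\chi_j\varphi\rangle,\quad\forall\,\varphi\in\mathscr{D}(U).
\end{equation*}
Local finiteness guarantees that only finitely many terms are nonzero for each $\varphi$ of compact support, so the sum is well defined, and the continuity estimate of Definition \ref{romanddefn2.3.2} is inherited from the finitely many $u_{i(j)}$ whose supports meet $\mathrm{supp}\,\varphi$; hence $u\in\mathscr{D}'(U)$.

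It then remains to check that $u$ restricts to $u_i$ on each $U_i$. For $\psi\in\mathscr{D}(U_i)$, every $\chi_j\psi$ is supported in $U_{i(j)}\cap U_i$, and the compatibility hypothesis $u_{i(j)}=u_i$ there allows me to replace $u_{i(j)}$ by $u_i$ in each term; using $\sum_j\chi_j\equiv1$ on $\mathrm{supp}\,\psi$ yields $\langle u,\psi\rangle=\langle u_i,\sum_j\chi_j\psi\rangle=\langle u_i,\psi\rangle$. Since this is a classical fact, I do not anticipate a genuine obstacle; the one substantive point—and the only place where the gluing hypothesis actually enters—is precisely this compatibility step, while the remainder is bookkeeping with the partition of unity and the local finiteness needed to keep all sums finite and to preserve continuity.
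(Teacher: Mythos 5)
Your proof is correct. The paper offers no argument of its own here—it simply cites \cite[Theorems 2.2.1 and 2.2.4]{hormander}—and your partition-of-unity argument (finite subcover over the compact support for the vanishing statement, a locally finite subordinate partition of unity for the gluing, with the compatibility hypothesis entering exactly where you say it does) is precisely the standard proof given in that reference.
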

\begin{proof}
See \cite[Theorem 2.2.1 \& Theorem 2.2.4]{hormander}.
\end{proof}
\begin{thm}
Let $U\subset\mathbb{R}^n$ and $u\in\mathscr{D}'(U)$. Then if $u(\varphi)\geq 0$ for each non-negative $\varphi\in\mathscr{D}(U)$, then $u$ is a positive regular Borel measure.
\end{thm}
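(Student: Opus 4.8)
The plan is to deduce the statement from the Riesz--Markov--Kakutani representation theorem, the essential point being that positivity automatically upgrades $u$ to an object of order zero.

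The first and crucial step is to show that for every compact set $K\subset U$ there is a constant $C_K\geq 0$ such that
$$|u(\varphi)|\leq C_K\sup_U|\varphi|,\quad\forall\,\varphi\in\mathscr{D}(K).$$
I would fix an auxiliary $\psi\in\mathscr{D}(U)$ with $0\leq\psi\leq 1$ and $\psi\equiv 1$ on a neighborhood of $K$. For a real-valued $\varphi\in\mathscr{D}(K)$ both functions $(\sup_U|\varphi|)\psi-\varphi$ and $(\sup_U|\varphi|)\psi+\varphi$ are non-negative, so applying the hypothesis $u(\cdot)\geq 0$ to each of them gives $|u(\varphi)|\leq u(\psi)\sup_U|\varphi|$; taking $C_K:=u(\psi)$ and decomposing a complex test function into its real and imaginary parts yields the claim. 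This is the only place where positivity is used in an essential way, and it is the heart of the argument.

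Given this order-zero estimate, $u$ is uniformly continuous on each $\mathscr{D}(K)$ for the topology of uniform convergence. Since the smooth compactly supported functions $\mathscr{D}(U)$ are dense in $C_c(U)$ for this topology, with supports staying inside a fixed compact neighborhood (via mollification), $u$ extends uniquely to a positive linear functional on $C_c(U)$. I would then invoke the Riesz--Markov--Kakutani theorem to produce a unique positive regular Borel measure $\mu$ on $U$ with $u(\varphi)=\int_U\varphi\,d\mu$ for all $\varphi\in C_c(U)$, and in particular for all $\varphi\in\mathscr{D}(U)$; this identifies the distribution $u$ with the measure $\mu$. The main obstacle is the first step, namely extracting the sup-norm bound directly from positivity; once it is in hand, the remainder is a routine combination of density and the Riesz representation theorem.
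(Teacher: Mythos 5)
Your proof is correct and follows exactly the route the paper takes: the paper simply cites H\"ormander's Theorem 2.1.7 (whose content is precisely your sup-norm estimate $|u(\varphi)|\leq u(\psi)\sup_U|\varphi|$ obtained by testing positivity against $(\sup_U|\varphi|)\psi\pm\varphi$) together with the Riesz representation theorem from Rudin. Your write-up is a faithful expansion of those two references, so there is nothing to add.
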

\begin{proof}
See  \cite[Theorem 2.1.7]{hormander} and \cite[Theorem 2.14 and Theorem 2.18]{rudinrealanalysis}.
\end{proof}
One can of course multiply a distribution $u$ in $\mathscr{D}'(U)$ by a smooth function $f\in C^\infty(U)$, and define partial
derivatives $\partial^\alpha u$ of a distribution $u$ by the formulae
\begin{align}
\label{defnchengdis}
&\langle fu,\varphi\rangle:=\langle u,f\varphi\rangle,\\
\label{defndaodis}
&\langle \partial^\alpha u,\varphi\rangle:=(-1)^{|\alpha|}\langle u,\partial^\alpha\varphi\rangle,\quad\forall\;\alpha\in\mathbb{N}^n,
\;\forall\;\varphi\in\mathscr{D}(U).
\end{align}
Indeed, these linear forms defined in \eqref{defnchengdis} and \eqref{defndaodis} are continuous on $\mathscr{D}(U)$ and hence are well defined.
\begin{prop}\label{ramondprop4.1.1}
Let $U\subset \mathbb{R}^n$ be an open set, and $u\in \mathscr{D}'(U)$. Let also $\varphi\in C^\infty(U\times\mathbb{R}^q)$. If
there exists a compact set $K\subset U$ such that $\mathrm{supp}\varphi\subset K\times \mathbb{R}^q$, then the function
$$
G:\;\mathbb{R}^q\to\mathbb{R},\quad y\mapsto \langle u,\varphi(\cdot,y)\rangle
$$
is $C^\infty$ , and there holds
$$
\partial^\alpha G(y)=\langle u,\partial_y^\alpha\varphi(\cdot,y)\rangle,\quad \forall\;\alpha \in\mathbb{N}^q.
$$
\end{prop}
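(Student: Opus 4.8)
The plan is to reduce everything to a single structural fact: that $y \mapsto \varphi(\cdot,y)$ is a smooth curve in the space $\mathscr{D}(K)$ of test functions supported in $K$, after which the continuity of $u$ on $\mathscr{D}(K)$ does all the remaining work. First I would note that for each fixed $y$ the slice $\varphi(\cdot,y)$ is smooth in $x$ and, by the hypothesis $\mathrm{supp}\,\varphi\subset K\times\mathbb{R}^q$, has support in $K$; hence $\varphi(\cdot,y)\in\mathscr{D}(K)$ and $G(y)=\langle u,\varphi(\cdot,y)\rangle$ is well defined. By Definition \ref{romanddefn2.3.2} there are constants $C$ and $s\in\mathbb{N}$, depending only on $K$ and not on $y$, with $|u(\psi)|\leq C\,p_K^s(\psi)$ for all $\psi\in\mathscr{D}(K)$, so the entire argument only needs to control the single seminorm $p_K^s$.

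Next I would establish continuity of $G$. As $y\to y_0$, I claim $\varphi(\cdot,y)\to\varphi(\cdot,y_0)$ in $\mathscr{D}(K)$, that is $p_K^s(\varphi(\cdot,y)-\varphi(\cdot,y_0))\to 0$. This is precisely the assertion that every $x$-derivative $\partial_x^\beta\varphi(x,y)$ with $|\beta|\leq s$ converges uniformly in $x\in K$ to $\partial_x^\beta\varphi(x,y_0)$, which follows from the uniform continuity of these finitely many derivatives on the compact set $K\times\overline{B}$, where $\overline{B}$ is a closed ball in $\mathbb{R}^q$ containing $y$ and $y_0$. Feeding this into the estimate $|u(\cdot)|\leq C\,p_K^s(\cdot)$ yields $G(y)\to G(y_0)$.

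The core step is differentiability, and it suffices to treat a single first-order derivative $\partial/\partial y_j$ and then iterate. Writing the difference quotient via the fundamental theorem of calculus,
$$\frac{\varphi(x,y+he_j)-\varphi(x,y)}{h}=\int_0^1\partial_{y_j}\varphi(x,y+the_j)\,\mathrm{d}t,$$
I would show that as $h\to 0$ this converges to $\partial_{y_j}\varphi(\cdot,y)$ in $\mathscr{D}(K)$: differentiating under the integral in $x$ and estimating $p_K^s$, the required uniform smallness comes once more from the uniform continuity of the derivatives $\partial_x^\beta\partial_{y_j}\varphi$ on a compact neighbourhood of $\{y\}\times K$, and all these functions retain their $x$-support in $K$ so the limit lies in $\mathscr{D}(K)$. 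Applying $|u(\cdot)|\leq C\,p_K^s(\cdot)$ to the difference between the quotient and its limit gives
$$\frac{G(y+he_j)-G(y)}{h}\longrightarrow\langle u,\partial_{y_j}\varphi(\cdot,y)\rangle,$$
so $\partial_{y_j}G$ exists and equals $\langle u,\partial_{y_j}\varphi(\cdot,y)\rangle$. Since $\partial_{y_j}\varphi\in C^\infty(U\times\mathbb{R}^q)$ satisfies the same support hypothesis, I would iterate this identity $|\alpha|$ times to reach $\partial^\alpha G(y)=\langle u,\partial_y^\alpha\varphi(\cdot,y)\rangle$ for every $\alpha\in\mathbb{N}^q$, and the continuity argument of the second paragraph, applied now to $\partial_y^\alpha\varphi$, shows that each such derivative is continuous; hence $G\in C^\infty$. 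The main obstacle will be exactly this passage to the limit in the $\mathscr{D}(K)$-topology: one must upgrade the pointwise convergence of difference quotients to the uniform convergence on $K$ of all $x$-derivatives up to order $s$, and it is here that the compact-support hypothesis (keeping the number of relevant seminorms finite and the relevant domain compact) together with the uniform-continuity estimate are indispensable.
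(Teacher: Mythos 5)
Your proposal is correct and is precisely the standard argument: the paper itself gives no proof but merely cites \cite[Proposition 4.1.1]{ramond2015} and \cite[Theorem 2.1.3]{hormander}, and your reduction to convergence of difference quotients in the single seminorm $p_K^s$ (via uniform continuity of the finitely many relevant derivatives on a compact set, followed by iteration) is exactly the argument those references give. No gaps; the only cosmetic remark is that one needs $|h|$ bounded so that $y+the_j$ stays in the compact ball $\overline{B}$, which you implicitly arrange.
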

\begin{proof}
See \cite[Propositon 4.1.1]{ramond2015} or \cite[Theorem 2.1.3]{hormander}.
\end{proof}
\begin{rem}
\begin{enumerate}
\item We have written $\langle u, \varphi(\cdot,y)\rangle$ in place of  $\langle u, \varphi_y\rangle$, where $\varphi_y\in C_0^\infty(\mathbb{R}^n)$
 is the
function given by $\varphi_y(x)=\varphi(x,y)$.
\item The assumption $\mathrm{supp}\varphi\subset K\times \mathbb{R}^q$  means that for any $y\in \mathbb{R}^q$, the support of $\varphi_y$ is included
in $K$. It holds in particular when $\varphi \in C^\infty_0(\Omega\times\mathbb{R}^q)$.
\item For a regular distribution $u=u_f$ with $f\in L^1_{\mathrm{loc}}(\Omega)$, we have $$
    G(y)=\int f(x)\varphi(x,y)\mathrm{d}x,
    $$
so that, under the above assumptions, we get $G\in C^\infty(\mathbb{R}^q)$ and
$$
\partial^\alpha G(y)=\int f(x)\partial_y^\alpha\varphi(x,y)\mathrm{d}x,\quad \forall\;\alpha \in\mathbb{N}^q.
$$
\end{enumerate}
\end{rem}
\begin{prop}
\label{ramondprop4.1.3}
Let $U\subset\mathbb{R}^k$ be an open set, and $u\in\mathscr{D}'(U)$. Let also $\varphi\in C^\infty(U\times \mathbb{R}^q)$.
Then
\begin{equation*}
\int_{\mathbb{R}^q}\langle u,\varphi(\cdot,y)\rangle\mathrm{d}y=\langle u,\int_{\mathbb{R}^q}\varphi(\cdot,y)\mathrm{d}y\rangle.
\end{equation*}
\end{prop}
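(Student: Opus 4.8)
The plan is to write both sides as limits of Riemann sums and to interchange the distribution $u$ with the $y$-integration using the continuity of $u$ on $\mathscr{D}(U)$. Throughout I assume the support hypotheses under which both sides are meaningful: there is a fixed compact $K\subset U$ with $\mathrm{supp}\,\varphi\subset K\times\mathbb{R}^q$ as in Proposition \ref{ramondprop4.1.1}, and $\varphi$ together with all its $x$-derivatives is integrable in $y$; for concreteness I treat the case where $\varphi$ has compact support in $y$, the general integrable case following by truncation and a limiting argument. Then $\Phi:=\int_{\mathbb{R}^q}\varphi(\cdot,y)\,\mathrm{d}y$ defines an element of $\mathscr{D}(U)$ and $G(y):=\langle u,\varphi(\cdot,y)\rangle$ is integrable over $\mathbb{R}^q$. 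By Proposition \ref{ramondprop4.1.1} the function $G$ is of class $C^\infty$, so the left-hand side $\int_{\mathbb{R}^q}G(y)\,\mathrm{d}y$ is the Lebesgue integral of a smooth integrand.

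The key step is to show that the Riemann sums of the integrand, $S_N:=\sum_j\varphi(\cdot,y_j)\,\Delta y_j\in\mathscr{D}(U)$ over partitions of a box containing $\mathrm{supp}_y\,\varphi$ with mesh tending to $0$, converge to $\Phi$ in the Fr\'echet topology of $\mathscr{E}(U)$, i.e. $p_K^s(S_N-\Phi)\to 0$ for every semi-norm in \eqref{equ1-2.1}. Differentiating under the integral sign gives $\partial_x^\alpha\Phi(x)=\int_{\mathbb{R}^q}\partial_x^\alpha\varphi(x,y)\,\mathrm{d}y$, so $\partial_x^\alpha S_N-\partial_x^\alpha\Phi$ is exactly the error of the Riemann sum of the continuous, compactly supported function $y\mapsto\partial_x^\alpha\varphi(x,y)$. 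The uniform continuity of each $\partial_x^\alpha\varphi$ on the compact set $K\times(\mathrm{supp}_y\,\varphi)$ forces this error to tend to $0$ uniformly in $x\in K$, which is precisely $p_K^s(S_N-\Phi)\to 0$.

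With this convergence in hand, I would conclude using that $u$ is a continuous linear form on $\mathscr{D}(U)$. By linearity, $\langle u,S_N\rangle=\sum_j\langle u,\varphi(\cdot,y_j)\rangle\,\Delta y_j=\sum_j G(y_j)\,\Delta y_j$, and by continuity $\langle u,S_N\rangle\to\langle u,\Phi\rangle=\langle u,\int_{\mathbb{R}^q}\varphi(\cdot,y)\,\mathrm{d}y\rangle$. On the other hand, since $G$ is continuous its Riemann sums satisfy $\sum_j G(y_j)\,\Delta y_j\to\int_{\mathbb{R}^q}G(y)\,\mathrm{d}y=\int_{\mathbb{R}^q}\langle u,\varphi(\cdot,y)\rangle\,\mathrm{d}y$. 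Equating the two limits yields the asserted identity.

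The main obstacle is the key step: verifying $S_N\to\Phi$ not merely in the sup-norm but in the full Fr\'echet topology, i.e. simultaneously for all orders of $x$-derivatives. This is where the hypotheses are genuinely used, through the common compact support in $x$ from Proposition \ref{ramondprop4.1.1} and the integrability of every $\partial_x^\alpha\varphi$ in $y$; granting these, the verification reduces to the uniform continuity of each $\partial_x^\alpha\varphi$ on a compact set and is otherwise routine.
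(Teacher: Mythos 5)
The paper gives no proof of this proposition at all: its ``proof'' is the single line ``See \cite[Proposition 4.1.3]{ramond2015}.'' Your Riemann-sum argument is the standard proof of this interchange result and is, in essence, the argument carried out in that cited reference, so there is no genuine divergence of method --- you have simply supplied the proof the paper outsources. The argument is correct: approximating $\int_{\mathbb{R}^q}\varphi(\cdot,y)\,\mathrm{d}y$ by Riemann sums $S_N$ supported in a fixed compact $K$, checking $p_K^s(S_N-\Phi)\to 0$ for every $s$ via uniform continuity of each $\partial_x^\alpha\varphi$ on $K\times\mathrm{supp}_y\varphi$, and then invoking the defining continuity estimate $|u(\phi)|\le C\,p_K^s(\phi)$ on $\mathscr{D}(K)$ is exactly what is needed. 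You are also right to flag that the statement as printed in the paper is incomplete --- with only $\varphi\in C^\infty(U\times\mathbb{R}^q)$ neither side is defined --- and your added hypotheses (support in $K\times\mathbb{R}^q$ as in Proposition \ref{ramondprop4.1.1}, plus compact support or integrability in $y$) are the ones implicitly intended. The only soft spot is the closing remark that the general integrable case ``follows by truncation'': for the truncated integrals to converge to $\Phi$ in the seminorms $p_K^s$ one needs the tails $\int_{|y|>R}\sup_{x\in K}|\partial_x^\alpha\varphi(x,y)|\,\mathrm{d}y$ to vanish, i.e.\ integrability of the $x$-derivatives uniformly over $K$, which is slightly stronger than pointwise-in-$x$ integrability; in the compactly supported case you treat in detail, nothing is missing.
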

\begin{proof}
See \cite[Proposition 4.1.3]{ramond2015}. 
\end{proof}
Let $f:\;\mathbb{R}^p\to\mathbb{C}$ and $g:\;\mathbb{R}^q\to\mathbb{C}$ be two functions. Then the function $f\otimes g$ is
defined on $\mathbb{R}^{p+q}$ by
$$
(f\otimes g)(x_1,x_2):=f(x_1)g(x_2),\quad \forall\;x=(x_1,x_2)\in\mathbb{R}^{p+q} \mbox{ with }x_1\in\mathbb{R}^p,\;x_2\in\mathbb{R}^q.
$$
This function is called the tensor product of $f$ and $g$.
\begin{prop}
Let $U\subset\mathbb{R}^n$ be an open set  and $u\in\mathscr{D}'(U)$. Then
\begin{equation*}
\partial_ju=0,\;\forall\;1\leq j\leq n \Leftrightarrow \exists\;C\in\mathbb{C}\; \mbox{such that}\;\langle u,\varphi\rangle=C\int\varphi,\quad \forall\;\varphi\in\mathscr{D}(U).
\end{equation*}
\end{prop}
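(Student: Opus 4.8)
The reverse implication is immediate, so I would dispose of it first. If $\langle u,\varphi\rangle=C\int_U\varphi\,\mathrm{d}x$ for all $\varphi\in\mathscr{D}(U)$, then for each $j$ and each $\varphi\in\mathscr{D}(U)$ we have $\langle\partial_ju,\varphi\rangle=-\langle u,\partial_j\varphi\rangle=-C\int_U\partial_j\varphi\,\mathrm{d}x=0$, since $\partial_j\varphi$ is the derivative of a compactly supported function and therefore integrates to zero. The entire content is thus in the forward implication, whose conclusion really says that a distribution with vanishing gradient is locally constant, and hence constant when $U$ is connected (on a disconnected $U$ one gets one constant per component).

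For the forward implication, the plan is to reduce everything to the analytic lemma that a test function of vanishing integral is a divergence: if $\phi\in\mathscr{D}(\mathbb{R}^n)$ satisfies $\int_{\mathbb{R}^n}\phi\,\mathrm{d}x=0$, then $\phi=\sum_{j=1}^n\partial_j\psi_j$ for suitable $\psi_j\in\mathscr{D}(\mathbb{R}^n)$. I would prove this by induction on $n$. The base case $n=1$ is explicit, since $\psi_1(x)=\int_{-\infty}^x\phi(t)\,\mathrm{d}t$ is smooth and compactly supported precisely because $\int\phi=0$. For the step, I would write $x=(x',x_n)$, set $\bar\phi(x'):=\int_{\mathbb{R}}\phi(x',x_n)\,\mathrm{d}x_n$, which by Fubini is smooth, compactly supported, and of vanishing integral over $\mathbb{R}^{n-1}$, and fix $\rho\in\mathscr{D}(\mathbb{R})$ with $\int\rho=1$. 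Then $\phi-\bar\phi\otimes\rho$ has vanishing $x_n$-integral for each fixed $x'$, so its fibrewise primitive $\psi_n$ lies in $\mathscr{D}(\mathbb{R}^n)$ and satisfies $\partial_n\psi_n=\phi-\bar\phi\otimes\rho$; applying the inductive hypothesis to $\bar\phi$ and tensoring each primitive with $\rho$ writes $\bar\phi\otimes\rho=\sum_{j=1}^{n-1}\partial_j(\eta_j\otimes\rho)$, closing the induction.

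Granting this lemma on a box $B\subset U$, I would fix $\chi_0\in\mathscr{D}(B)$ with $\int\chi_0\,\mathrm{d}x=1$ and put $C_B:=\langle u,\chi_0\rangle$. For arbitrary $\varphi\in\mathscr{D}(B)$ the function $\varphi-\big(\int_B\varphi\,\mathrm{d}x\big)\chi_0$ has vanishing integral, hence is a divergence $\sum_j\partial_j\psi_j$ with $\psi_j\in\mathscr{D}(B)$, so that $\langle u,\varphi-(\int\varphi)\chi_0\rangle=-\sum_j\langle\partial_ju,\psi_j\rangle=0$ and therefore $\langle u,\varphi\rangle=\big(\int_B\varphi\,\mathrm{d}x\big)\,C_B$. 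This shows $u$ is locally constant. Since a distribution is determined by its restrictions to an open cover (Theorem \ref{hormanderthm2.2.1-2.2.4}) and $U$ is connected, the local constants $C_B$ must agree across overlaps, producing the single $C$ demanded by the statement.

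The hard part will not be the algebra but the support bookkeeping. The naive primitives obtained by integrating from $-\infty$ need not stay supported inside a general open $U$, which is exactly why I would run the divergence lemma only on boxes, where the tensor-product primitives $\eta_j\otimes\rho$ and $\psi_n$ remain compactly supported, and then reassemble. I expect the one genuinely delicate point to be this localization and reassembly, i.e. propagating the single value $C$ throughout $U$ via connectedness and the gluing principle for distributions, rather than attempting a global primitive construction on $U$ itself.
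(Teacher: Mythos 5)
Your proof is correct and complete. Note that the paper itself gives no argument for this proposition: its ``proof'' is a bare citation to \cite[Proposition 4.2.3]{ramond2015}, so there is nothing internal to compare against; your write-up is essentially the standard textbook argument that the cited reference also uses (vanishing-integral test functions are divergences, proved by induction with the fibrewise primitive and a fixed $\rho$ of unit integral, then localization to boxes and gluing). Two small points are worth recording. First, you are right that the statement as printed is only correct for connected $U$ --- on a disconnected $U$ the forward implication yields one constant per component, so connectedness is an implicit hypothesis you have correctly made explicit (the cited source indeed assumes $\Omega$ connected). Second, your support bookkeeping is sound precisely because you run the divergence lemma on product boxes: the projection of a compact subset of a box onto the first $n-1$ coordinates stays compactly inside the corresponding sub-box, so the primitives $\psi_n$ and the tensors $\eta_j\otimes\rho$ remain in $\mathscr{D}(B)$, and overlapping boxes force equal constants since their intersection is again a nonempty box. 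Propagating along chains of boxes in a connected $U$ and invoking the gluing principle (Theorem \ref{hormanderthm2.2.1-2.2.4}) finishes the argument exactly as you describe.
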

\begin{proof}
See \cite[Proposition 4.2.3]{ramond2015}.
\end{proof}
The tensor product of distributions is defined by
\begin{prop}
\label{ramondprop4.2.4}
Let $(u_1,u_2)\in\mathscr{D}'(U_1)\times \mathscr{D}'(U_2)$. Then one has
\begin{enumerate}
\item for each $\varphi\in\mathscr{D}(U_1\times U_2),$ the function $\psi:\;x\mapsto\langle u_2,\varphi(x,\cdot)$ belongs to $\mathscr{D}(U_1);$
\item the linear form $u:\;\mathscr{D}(U_1\times U_2)\ni\varphi\mapsto \langle u_1,\psi\rangle$ is continuous in $\mathscr{D}(U_1\times U_2)$, and it is called the tensor product, denoted by $u=:u_1\otimes u_2$, of $u_1$ and $u_2$. It is the only distribution in $\mathscr{D}'(U_1\times U_2)$ such that
    $$
    \langle u,\varphi_1\otimes \varphi_2\rangle=\langle u_1,\varphi_1\rangle\langle u_2,\varphi_2\rangle,\quad\forall\;(\varphi_1,\varphi_2)\in \mathscr{D}(U_1)\times \mathscr{D}(U_2).
    $$
\end{enumerate}
\end{prop}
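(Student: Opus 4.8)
The plan is to verify the two assertions in turn: first the smoothness and compact support of $\psi$, then the continuity of $u$, the tensor identity, and finally uniqueness, which is where the genuine work lies.

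For Part (1), I would fix $\varphi \in \mathscr{D}(U_1\times U_2)$ and choose compact sets $K_1\subset U_1$, $K_2\subset U_2$ with $\mathrm{supp}\,\varphi\subset K_1\times K_2$. Viewing $u_2$ as acting on the second variable $y$ and treating $x\in U_1$ as a parameter, the function $\Phi(y,x):=\varphi(x,y)$ lies in $C^\infty(U_2\times\mathbb{R}^p)$ and satisfies $\mathrm{supp}\,\Phi\subset K_2\times\mathbb{R}^p$, so the support hypothesis of Proposition \ref{ramondprop4.1.1} is met. Applying that proposition with the distribution $u_2$ then shows that $\psi(x)=\langle u_2,\varphi(x,\cdot)\rangle$ is $C^\infty$ on $U_1$ with $\partial_x^\alpha\psi(x)=\langle u_2,\partial_x^\alpha\varphi(x,\cdot)\rangle$ for every $\alpha$. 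For the support, if $x\notin K_1$ then $\varphi(x,\cdot)\equiv 0$ on $U_2$, hence $\psi(x)=0$; therefore $\mathrm{supp}\,\psi\subset K_1$ is compact and $\psi\in\mathscr{D}(U_1)$.

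For Part (2), I would define $u(\varphi):=\langle u_1,\psi\rangle$, linearity being immediate from the linearity of $u_1$ and $u_2$. To check continuity I would combine the seminorm estimates of Definition \ref{romanddefn2.3.2}: the continuity of $u_2$ on $\mathscr{D}(K_2)$ applied to each derivative $\partial_x^\alpha\varphi(x,\cdot)$ bounds the seminorms $p_{K_1}^{s_1}(\psi)$ in terms of seminorms $p_{K_1\times K_2}^{s}(\varphi)$; then the continuity of $u_1$ on $\mathscr{D}(K_1)$ furnishes a constant $C$ and an order $s$ with $|u(\varphi)|\le C\,p_{K_1\times K_2}^{s}(\varphi)$, so $u\in\mathscr{D}'(U_1\times U_2)$. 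The tensor identity is then a direct computation: for $\varphi=\varphi_1\otimes\varphi_2$ one has $\psi(x)=\varphi_1(x)\langle u_2,\varphi_2\rangle$, whence $\langle u,\varphi_1\otimes\varphi_2\rangle=\langle u_2,\varphi_2\rangle\,\langle u_1,\varphi_1\rangle$.

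The hard part will be uniqueness, which rests on the density of the linear span of the tensor products $\{\varphi_1\otimes\varphi_2:\varphi_i\in\mathscr{D}(U_i)\}$ in $\mathscr{D}(U_1\times U_2)$. I would establish this by fixing $\varphi$ supported in the interior of a product box $K_1\times K_2$, expanding it in a multiple Fourier series on a slightly larger torus, and multiplying the partial sums by fixed cutoffs $\zeta_1(x)\zeta_2(y)$ that equal $1$ near $\mathrm{supp}\,\varphi$; the resulting finite sums of products then converge to $\varphi$ in the $C^\infty$ topology with supports confined to a fixed compact set. Granting this density, if $v\in\mathscr{D}'(U_1\times U_2)$ also satisfies $\langle v,\varphi_1\otimes\varphi_2\rangle=\langle u_1,\varphi_1\rangle\langle u_2,\varphi_2\rangle$, then $u$ and $v$ are continuous functionals agreeing on a dense subspace, hence $u=v$. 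This completes the construction of the tensor product $u_1\otimes u_2$.
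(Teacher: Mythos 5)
Your proof is correct. The paper itself gives no argument for this proposition---it simply defers to \cite[Proposition 4.2.4 \& Proposition 4.2.5]{ramond2015}---so there is nothing internal to compare against; your write-up supplies the standard proof that the cited source also follows. In particular, you correctly reduce part (1) to the paper's own Proposition \ref{ramondprop4.1.1} (the one genuinely needed observation being that $\varphi$, extended by zero in the parameter variable $x$ outside the compact projection $K_1$ of its support, is smooth on $U_2\times\mathbb{R}^{n_1}$ with support in $K_2\times\mathbb{R}^{n_1}$, which you implicitly use), and your seminorm chaining $|u(\varphi)|\le C_1C_2\,p_{K_1\times K_2}^{s_1+s_2}(\varphi)$ is exactly the right continuity estimate in the sense of Definition \ref{romanddefn2.3.2}. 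The only substantive ingredient beyond formal bookkeeping is the uniqueness claim, which does require the sequential density of finite sums of split tensors in $\mathscr{D}(U_1\times U_2)$ with supports confined to a fixed compact set; your truncated-Fourier-series-times-cutoff construction is a standard and valid way to get this (one should note, as you essentially do, that $\mathrm{supp}\,\varphi$ is always contained in the product of its compact projections, so the reduction to a product box loses nothing). No gaps.
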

\begin{proof}
See \cite[Proposition 4.2.4 \& Proposition 4.2.5]{ramond2015}.
\end{proof}
\begin{prop}
\label{ramondexercise4.2.7}
Let $(u_1,u_2)\in\mathscr{D}'(U_1)\times \mathscr{D}'(U_2),$ where $U_1\subset\mathbb{R}^k$ and $U_2\subset\mathbb{R}^n$ are open sets. Then for $j\in\{1,\cdots,n+k\},$ one has
\begin{equation*}
\partial_j(u_1\otimes u_2)
=
\left\{\begin{array}{ll}
(\partial_j u_1) \otimes u_2,&\quad\mbox{for}\quad 1\leq j\leq k,   \\
u_1\otimes(\partial_j u_2), & \quad\mbox{for}\quad k+1\leq j\leq n+k.
\end{array}
\right.
\end{equation*}
\end{prop}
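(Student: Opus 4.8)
The plan is to reduce the verification to decomposable test functions $\varphi_1\otimes\varphi_2$ and then appeal to the uniqueness clause of Proposition \ref{ramondprop4.2.4}. For a fixed $j$, the distribution $\partial_j(u_1\otimes u_2)\in\mathscr{D}'(U_1\times U_2)$ is, by \eqref{defndaodis}, characterized by $\langle \partial_j(u_1\otimes u_2),\varphi\rangle=-\langle u_1\otimes u_2,\partial_j\varphi\rangle$ for all $\varphi\in\mathscr{D}(U_1\times U_2)$. Since Proposition \ref{ramondprop4.2.4} asserts that a tensor product is the \emph{unique} distribution on $U_1\times U_2$ taking the prescribed values on products $\varphi_1\otimes\varphi_2$, it suffices to show that $\partial_j(u_1\otimes u_2)$ takes, on every such $\varphi_1\otimes\varphi_2$, exactly the values that characterize $(\partial_j u_1)\otimes u_2$ (resp. $u_1\otimes(\partial_j u_2)$).

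First I would treat the case $1\le j\le k$, in which $\partial_j$ differentiates the $U_1$-variables only. The elementary identity $\partial_j(\varphi_1\otimes\varphi_2)=(\partial_j\varphi_1)\otimes\varphi_2$, together with the defining product relation of Proposition \ref{ramondprop4.2.4}, gives
\begin{equation*}
\langle \partial_j(u_1\otimes u_2),\varphi_1\otimes\varphi_2\rangle
=-\langle u_1\otimes u_2,(\partial_j\varphi_1)\otimes\varphi_2\rangle
=-\langle u_1,\partial_j\varphi_1\rangle\,\langle u_2,\varphi_2\rangle .
\end{equation*}
Applying \eqref{defndaodis} once more, now to $u_1$, converts $-\langle u_1,\partial_j\varphi_1\rangle$ into $\langle \partial_j u_1,\varphi_1\rangle$, so the right-hand side equals $\langle \partial_j u_1,\varphi_1\rangle\langle u_2,\varphi_2\rangle$, which is precisely the value characterizing $(\partial_j u_1)\otimes u_2$ on $\varphi_1\otimes\varphi_2$. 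By the uniqueness part of Proposition \ref{ramondprop4.2.4}, the two distributions coincide.

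The case $k+1\le j\le n+k$ is entirely symmetric: here $\partial_j$ acts on the $U_2$-variables, so $\partial_j(\varphi_1\otimes\varphi_2)=\varphi_1\otimes(\partial_j\varphi_2)$, and the same chain of equalities with the roles of the two factors exchanged yields $\langle u_1,\varphi_1\rangle\langle \partial_j u_2,\varphi_2\rangle$, identifying $\partial_j(u_1\otimes u_2)$ with $u_1\otimes(\partial_j u_2)$. I do not expect a genuine obstacle; the only point requiring care is the passage from agreement on decomposable test functions to equality of distributions, and this is supplied verbatim by the uniqueness already proved in Proposition \ref{ramondprop4.2.4} (equivalently, by the density of the span of the $\varphi_1\otimes\varphi_2$ in $\mathscr{D}(U_1\times U_2)$). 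One should also note that $\partial_j\varphi$ and the factored derivatives remain in the correct test-function spaces, which is immediate since differentiation preserves smoothness and compact support.
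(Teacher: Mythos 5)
Your argument is correct: testing against decomposable test functions $\varphi_1\otimes\varphi_2$ and invoking the uniqueness clause of Proposition \ref{ramondprop4.2.4} (applied to the pair $(\partial_j u_1,u_2)$, resp. $(u_1,\partial_j u_2)$) is exactly the standard proof of this identity. The paper itself offers no argument but simply cites \cite[Proposition 4.2.7]{ramond2015}, and your reasoning is the one found there, so there is nothing further to compare.
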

\begin{proof}
See \cite[Proposition 4.2.7]{ramond2015}.
\end{proof}
\begin{prop}
\label{ramondexercise4.2.9}
Let $u\in \mathscr{D}'(U_1\times U_2),$ where $U_1\subset\mathbb{R}^k$ and $U_2\subset\mathbb{R}^n$ are open sets. Then the following two assertions are equivalent$:$
\begin{enumerate}
\item \label{ramondexercise4.2.9item1}$\partial_j u=0$ for all $j\in\{1,\cdots,k\};$
\item\label{ramondexercise4.2.9item2}There exists $v\in \mathscr{D}'(U_2)$ such that $u=1\otimes v$.
\end{enumerate}
\end{prop}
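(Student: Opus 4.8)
The plan is to prove the two implications separately; the substance lies entirely in \eqref{ramondexercise4.2.9item1}$\Rightarrow$\eqref{ramondexercise4.2.9item2}. The implication \eqref{ramondexercise4.2.9item2}$\Rightarrow$\eqref{ramondexercise4.2.9item1} is immediate: if $u=1\otimes v$, then for $1\le j\le k$ Proposition \ref{ramondexercise4.2.7} gives $\partial_j u=\partial_j(1\otimes v)=(\partial_j 1)\otimes v=0$, since the constant distribution $1\in\mathscr{D}'(U_1)$ has vanishing first derivatives.

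For \eqref{ramondexercise4.2.9item1}$\Rightarrow$\eqref{ramondexercise4.2.9item2} I would first construct the candidate $v$. Fix once and for all a cut-off $\rho\in\mathscr{D}(U_1)$ with $\int_{U_1}\rho\,\mathrm{d}x_1=1$, and define $v\in\mathscr{D}'(U_2)$ by $\langle v,\psi\rangle:=\langle u,\rho\otimes\psi\rangle$ for $\psi\in\mathscr{D}(U_2)$; continuity of $v$ follows from that of $u$ together with the continuity of $\psi\mapsto\rho\otimes\psi$. It then suffices to verify $\langle u,\varphi\rangle=\langle 1\otimes v,\varphi\rangle$ for every $\varphi\in\mathscr{D}(U_1\times U_2)$. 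Writing $\Phi(x_2):=\int_{U_1}\varphi(x_1,x_2)\,\mathrm{d}x_1\in\mathscr{D}(U_2)$, Propositions \ref{ramondprop4.2.4} and \ref{ramondprop4.1.3} give $\langle 1\otimes v,\varphi\rangle=\langle v,\Phi\rangle=\langle u,\rho\otimes\Phi\rangle$, so the identity to be proved becomes $\langle u,g\rangle=0$, where $g:=\varphi-\rho\otimes\Phi\in\mathscr{D}(U_1\times U_2)$ satisfies $\int_{U_1}g(x_1,x_2)\,\mathrm{d}x_1=0$ for every fixed $x_2$ by the normalization of $\rho$.

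The decisive step, and the main obstacle, is a parametrized primitive construction: any $g\in\mathscr{D}(U_1\times U_2)$ with vanishing $x_1$-mean can be written as $g=\sum_{j=1}^k\partial_{x_1^j}\chi_j$ with $\chi_j\in\mathscr{D}(U_1\times U_2)$, the variable $x_2$ playing the role of an inert parameter. Granting this, $\langle u,g\rangle=\sum_{j=1}^k\langle u,\partial_{x_1^j}\chi_j\rangle=-\sum_{j=1}^k\langle\partial_{x_1^j}u,\chi_j\rangle=0$ by hypothesis \eqref{ramondexercise4.2.9item1}, which completes the proof. To produce the $\chi_j$ I would argue one coordinate direction at a time: in the last $x_1$-variable the primitive $\int_{-\infty}^{x_1^k}(\,\cdot\,)\,\mathrm{d}t$ of a mean-zero function is again smooth and compactly supported, and depends smoothly on the remaining variables (including $x_2$) by differentiation under the integral sign as in Proposition \ref{ramondprop4.1.1}; peeling off a factor $\eta(x_1^k)$ with $\int\eta=1$ reduces the integral over the remaining $x_1$-variables to the mean-zero situation in one lower dimension, so an induction on $k$ closes the argument.

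The delicate point is that each primitive must retain compact support inside $U_1$, and this is exactly what the mean-zero hypothesis guarantees direction by direction; it holds in particular when the $U_1$-factor is a product of intervals (a cube), which is the only case arising from a distinguished foliation chart $\tilde V_\kappa\times\tilde U_\kappa\subset\mathbb{R}^k\times\mathbb{R}^n$ and is all that is needed for the applications here. The accompanying smoothness in $x_2$ and the continuity estimates for $v$ are routine consequences of Proposition \ref{ramondprop4.1.1}, so the entire weight of the argument rests on the primitive decomposition described above.
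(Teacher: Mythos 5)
Your proof is correct, and it follows the paper's overall strategy --- the same treatment of \eqref{ramondexercise4.2.9item2}$\Rightarrow$\eqref{ramondexercise4.2.9item1} via Proposition \ref{ramondexercise4.2.7}, the same definition of $v$ by testing $u$ against $\rho\otimes\psi$ with a normalized cut-off, and the same reduction to showing $\langle u,g\rangle=0$ for $g$ with vanishing $x_1$-mean --- but it diverges at the decisive step, and in a way that matters. You decompose $g=\sum_{j=1}^{k}\partial_{x_1^j}\chi_j$ as a \emph{sum} of single coordinate derivatives of compactly supported functions, built by the standard induction (subtract $G\otimes\eta$ where $G$ is the partial mean in the last $x_1$-variable, take a primitive there, recurse); this uses the hypothesis $\partial_ju=0$ one derivative at a time and is the classical top-degree compactly supported Poincar\'e argument. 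The paper instead claims that vanishing total $x_1$-integral makes $g$ an iterated derivative $\partial_1\cdots\partial_k\psi$ of a \emph{single} $\psi\in\mathscr{D}(U_1\times U_2)$, obtained as the iterated primitive. That claim is false for $k\ge 2$: take $g=a\otimes b\otimes c$ with $\int a=0$ but $\int b\neq 0$; the total integral vanishes, yet the iterated primitive is not compactly supported, and in fact $g\notin\mathrm{Im}(\partial_1\circ\partial_2)$, since any element of that image must have vanishing integral along every coordinate line, not just vanishing total integral. So your sum decomposition is not merely a stylistic variant; it repairs a genuine gap in the paper's argument while reaching the same (true) conclusion. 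The caveat you flag --- that the primitives must remain supported inside $U_1$ --- is real and requires $U_1$ to be a cube or at least suitably convex and connected (for disconnected $U_1$ the proposition itself fails, as $u$ could equal $1\otimes v'$ and $1\otimes v''$ with $v'\neq v''$ on different components); this is exactly the situation of the distinguished charts where the proposition is applied, and the paper is silent on the point as well.
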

\begin{proof}
$\eqref{ramondexercise4.2.9item2} \Rightarrow \eqref{ramondexercise4.2.9item1} $ follows from Proposition \ref{ramondexercise4.2.7}.

$\eqref{ramondexercise4.2.9item1} \Rightarrow \eqref{ramondexercise4.2.9item2} $ follows from \cite[Theorem 3.1.4$'$]{hormander} for $n=1$, and for general $n$, we first claim that for each $\chi\in \mathscr{D}(U_1\times U_2)$ there holds
\begin{equation*}
\chi\in \mathrm{Im}\left(\partial_{1}\circ\cdots\circ\partial_{k}\right) =\bigcap_{\ell=1}^k \mathrm{Im}\partial_{\ell}
\end{equation*}
if and only if
\begin{equation}
\label{equhorvath1966exercise4-3-b}
\int_{\mathbb{R}^k}\chi(t^{1},\cdots,t^{k},x^{k+1},\cdots,x^{k+n})
\mathrm{d}t^{1}\wedge\cdots\wedge \mathrm{d}t^{k}=0.
\end{equation}
Indeed, for `only if' direction, we assume that $\chi\in \mathrm{Im}\left(\partial_{1}\circ\cdots\circ\partial_{k}\right)\cap\mathscr{D}(U_1\times U_2)$, i.e., $\chi$ satisfies
\begin{equation}
\label{equhorvath1966exercise4-3-b1}
\frac{\partial^k\psi}{\partial x^{1}\cdots\partial x^{k}}=\chi
\end{equation}
for $\psi\in \mathscr{D}(U_1\times U_2)$. Equality \eqref{equhorvath1966exercise4-3-b} holds by a direct calculation.

For the `if' direction, the function
\begin{equation*}
\psi(x):=\int_{-\infty}^{x^{1}}\cdots\int_{-\infty}^{x^{k}}\chi(t^{1},\cdots,t^{k},
x^{k+1},\cdots,x^{k+n})
\mathrm{d}t^{1}\wedge\cdots\wedge \mathrm{d}t^{k}
\end{equation*}
satisfies \eqref{equhorvath1966exercise4-3-b1}. In addition, $\psi\in\mathscr{D}(U_1\times U_2)$ if \eqref{equhorvath1966exercise4-3-b} holds.

We choose $\psi_0\in \mathscr{D}(U_1)$ with $\int_{U_1}\psi_0=1$ and define
\begin{equation*}
\langle v,\chi\rangle:=\langle u,\chi_0\rangle,\quad \forall\; \chi\in\mathscr{D}(U_2)
\end{equation*}
with
$$
\chi_0:=\chi(x^{k+1},\cdots,x^{k+n})\psi_0(x^{1},\cdots,x^{k}).
$$
A direct check yields that $v\in\mathscr{D}'(U_2)$. If $\phi\in \mathscr{D}(U_1\times U_2)$ and we set
$$
I(\phi):=\int_{\mathbb{R}^k}\phi(t^{1},\cdots,t^{k},x^{k+1},\cdots,x^{k+n})
\mathrm{d}t^{1}\wedge\cdots\wedge \mathrm{d}t^{k},
$$
and have that
$$
\int_{\mathbb{R}^n}(\phi(t^1,\cdots,t^k,x^{k+1},\cdots,x^{k+n})-I(\phi)\psi_0)
\mathrm{d}t^{1}\wedge\cdots\wedge \mathrm{d}t^{k}=0.
$$
Hence we have $\phi-I(\phi)\psi_0\in  \mathrm{Im}\left(\partial_{1}\circ\cdots\circ\partial_{k}\right),$
which, together with the fact that $\partial_ju=0$ for $j=1,\cdots,k$, yields that
$$
\langle u,(\phi-I(\phi)\psi_0)\rangle=0,
$$
i.e.,
\begin{align*}
\langle u, \phi\rangle=&\langle u,(I(\phi))_0\rangle=\langle v,I(\phi)\rangle\\
=&\langle v,\int_{\mathbb{R}^k}\phi(x^1,\cdots,x^{k+n})
\mathrm{d}x^{1}\wedge\cdots\wedge\mathrm{d}x^{k}\rangle\\
=&\int_{\mathbb{R}^k}\langle v,\phi\rangle\mathrm{d}x^{1}\wedge\cdots\wedge\mathrm{d}x^{k}
\end{align*}
by Proposition \ref{ramondprop4.1.3}, as desired.
\end{proof}

Let us recall the pullback of a distribution by some smooth map. 
Let $U_j\subset\mathbb{R}^{n_j}$ for $j=1,2$ be open sets, and $f:\;U_1\to U_2$  a smooth map such that $f'(x)$ is surjective for each point $x\in\mathbb{R}^{n_1}$ (which yields that $n_1\geq n_2$). Then for each $x_0\in U_1$ fixed we choose a smooth map $g:\;U_1\to\mathbb{R}^{n_1-n_2}$ (e.g., a linear map) such that the direct sum $f\oplus g$
$$
U_1\ni x\mapsto (f(x),g(x))\in \mathbb{R}^{n_1}=\mathbb{R}^{n_2}\times \mathbb{R}^{n_1-n_2}
$$
has a bijective differential at $x_0$. By the inverse theorem there exists an open neighborhood $V_1\subset U_1$ such that the restriction of $f\oplus g$ to $V_1$ is a diffeomorphism on an open neighborhood $V_2$ of $(f(x_0),g(x_0))$. We denote by $h$ its inverse. For any $\phi\in\mathscr{D}(V_1)$, we define
\begin{equation}
\label{hormanderequ6.1.1}
(f^*u)(\phi):=(u\otimes \mathbf{1}_{\mathbb{R}^{n_1-n_2}})(\Phi),\quad \Phi(y):=\phi(h(y))|\det h'(y)|,
\end{equation}
where $\mathbf{1}_{\mathbb{R}^{n_1-n_2}}$ is the function $1$ on $\mathbb{R}^{n_1-n_2}$. If in addition $u\in C^0(U_2)$, then we have $f^*u=u\circ f$.
\begin{thm}
\label{hormanderthm6.1.2}
Let $U_j\subset\mathbb{R}^{n_j}$ for $j=1,2$ be open sets$,$ and $f:\;U_1\to U_2$  a smooth map such that $f'(x)$ is surjective for each point $x\in\mathbb{R}^{n_1}$.
Then there exists a unique continuous linear map $f^*:\,\mathscr{D}'(U_2)\to \mathscr{D}'(U_1)$ such that $f^*u=u\circ f$ when $u\in C^0(U_2)$. It maps $\mathscr{D}'^k(U_2)$ into $\mathscr{D}'^k(U_1)$ for each $k$. We call $f^*u$ the pullback of $u$ by $f$.
\end{thm}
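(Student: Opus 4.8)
The plan is to build $f^{*}u$ by a purely local recipe, check that the recipe is independent of all auxiliary choices, glue the local pieces together with the sheaf property of distributions, and then read off linearity, continuity, the defining identity and order preservation. The key starting observation is that the local formula \eqref{hormanderequ6.1.1} factors through two operations we already control. Writing $\pi:\mathbb{R}^{n_2}\times\mathbb{R}^{n_1-n_2}\to\mathbb{R}^{n_2}$ for the projection, we have $\pi\circ(f\oplus g)=f$ on $V_1$, and since pullback reverses composition the formula \eqref{hormanderequ6.1.1} is exactly $\langle(f\oplus g)^{*}\pi^{*}u,\phi\rangle$, where $\pi^{*}u=u\otimes\mathbf{1}_{\mathbb{R}^{n_1-n_2}}$ and $(f\oplus g)^{*}$ is the elementary pullback by the diffeomorphism $f\oplus g$. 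As the tensor product (Proposition \ref{ramondprop4.2.4}) and the diffeomorphism pullback both send distributions to distributions, the local assignment $f^{*}u:=(f\oplus g)^{*}(u\otimes\mathbf{1})$ is automatically a distribution on $V_1$; what must be verified is that it does not depend on the complementary map $g$ nor on the chosen neighborhoods, so that distinct local definitions agree on overlaps.

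The heart of the matter, and the step I expect to be the main obstacle, is the following invariance: if $\kappa(z,w)=(z,\psi(z,w))$ is a fiber-preserving diffeomorphism between open subsets of $\mathbb{R}^{n_2}\times\mathbb{R}^{n_1-n_2}$, then $\kappa^{*}(u\otimes\mathbf{1})=u\otimes\mathbf{1}$. I would prove this directly from the diffeomorphism pullback formula. Writing $\kappa^{-1}(z,v)=(z,\eta(z,v))$, the differential of $\kappa^{-1}$ is block lower-triangular with identity $z$-block, so $|\det(\kappa^{-1})'(z,v)|=|\det\partial_{v}\eta(z,v)|$, and for a test function $\phi$ one obtains
\begin{equation*}
\langle\kappa^{*}(u\otimes\mathbf{1}),\phi\rangle=\bigl\langle u\otimes\mathbf{1},\ \phi(z,\eta(z,v))\,|\det\partial_{v}\eta(z,v)|\bigr\rangle.
\end{equation*}
The Fubini-type Proposition \ref{ramondprop4.1.3} collapses the $\mathbf{1}$-factor into an integration over $v$, and the fiberwise change of variables $w=\eta(z,v)$ turns $\int\phi(z,\eta(z,v))\,|\det\partial_{v}\eta|\,dv$ back into $\int\phi(z,w)\,dw$, so the right-hand side becomes $\langle u,\ z\mapsto\int\phi(z,w)\,dw\rangle=\langle u\otimes\mathbf{1},\phi\rangle$. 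Since any two admissible choices $g,g'$ yield a transition $(f\oplus g')\circ(f\oplus g)^{-1}$ that fixes the first $n_2$ coordinates, this invariance forces the two local definitions to coincide, which is precisely the required independence of $g$.

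With well-definedness established, the local distributions agree on overlaps and patch to a unique $f^{*}u\in\mathscr{D}'(U_1)$ by the gluing property of Theorem \ref{hormanderthm2.2.1-2.2.4}. Linearity is immediate from the formula, and continuity in the weak-$*$ topology is a local question that follows because $u\mapsto u\otimes\mathbf{1}$ is weak-$*$ continuous and pairing against a fixed $\Phi$ is continuous. When $u\in C^{0}(U_2)$, the change of variables already recorded after \eqref{hormanderequ6.1.1} gives $f^{*}u=u\circ f$. Uniqueness is then forced: since $\mathscr{D}(U_2)\subset C^{0}(U_2)$ is weak-$*$ dense in $\mathscr{D}'(U_2)$, any continuous linear map satisfying the defining identity agrees with our construction on a dense set and hence everywhere. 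Finally, order is local, and in each chart $f^{*}u=(f\oplus g)^{*}(u\otimes\mathbf{1})$; tensoring with the order-zero function $\mathbf{1}$ and pulling back by the diffeomorphism $f\oplus g$ each preserve order, so $f^{*}$ maps $\mathscr{D}'^{k}(U_2)$ into $\mathscr{D}'^{k}(U_1)$, completing the plan.
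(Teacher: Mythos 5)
Your argument is correct. Note, however, that the paper does not actually prove this statement: its ``proof'' is the single line ``See \cite[Theorem 6.1.2]{hormander}'', so the only meaningful comparison is with H\"ormander's own argument. There the independence of the local formula \eqref{hormanderequ6.1.1} from the auxiliary map $g$ is obtained indirectly: one observes that the right-hand side of \eqref{hormanderequ6.1.1} is sequentially continuous in $u$, that it equals $u\circ f$ for continuous $u$, and that smooth functions are weak-$*$ dense in $\mathscr{D}'(U_2)$, so any two choices of $g$ give maps agreeing on a dense set. You instead prove the well-definedness directly, by factoring the transition between two charts as a fiber-preserving diffeomorphism $\kappa(z,w)=(z,\psi(z,w))$ and computing $\kappa^{*}(u\otimes\mathbf{1})=u\otimes\mathbf{1}$ via the tensor-product structure (Proposition \ref{ramondprop4.2.4}) and a fiberwise change of variables; this is a genuinely different, more computational route to the same conclusion, and it has the advantage of not presupposing the density/continuity machinery at the well-definedness stage (you still use density, appropriately, only for uniqueness). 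Two cosmetic points: the collapse of the $\mathbf{1}$-factor is really the defining property of the tensor product in Proposition \ref{ramondprop4.2.4} rather than the Fubini statement of Proposition \ref{ramondprop4.1.3}; and your continuity claim can be made even more transparent by noting that $\langle u\otimes\mathbf{1},\Phi\rangle=\langle u,\,z\mapsto\int\Phi(z,v)\,\mathrm{d}v\rangle$ is evaluation of $u$ against a fixed test function. Neither affects the validity of the proof.
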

\begin{proof}
See \cite[Theorem 6.1.2]{hormander}.
\end{proof}
\begin{defn}
\label{hormanderdefn6.3.3}
Let $X$ be a  smooth oriented manifold with $\dim_{\mathbb{R}}X=n$ and countable basis and $\mathscr{A}$ an atlas on $X$ consisting of homeomorphism $\kappa$ of open set $U_\kappa\subset X$ to $\tilde U_\kappa\subset\mathbb{R}^n$. If for each atlas $\kappa$ one is given a distribution $u_\kappa\in\mathscr{D}'(\tilde U_\kappa)$ such that
\begin{equation}
\label{hormanderequ6.3.3}
u_{\kappa'}:=(\kappa\circ\kappa'^{-1})^*u_{\kappa}\quad \mbox{in}\quad \kappa'(U_\kappa\cap U_{\kappa'}),
\end{equation}
then the system $u_\kappa$ is called a distribution in $X$. The set of all distributions in $X$ is denoted by $\mathscr{D}'(X)$.
\end{defn}
The following theorem shows in particular that Definition \ref{hormanderdefn6.3.3} coincides with our previous one if $M$ is an open subset of $\mathbb{R}^n$.
\begin{thm}
\label{hormanderthm6.3.4}
Let $X$ be a smooth oriented manifold with $\dim_{\mathbb{R}}X=n$ and countable basis and $\mathscr{A}$ an atlas consisting of homeomorphism $\kappa$ of open set $U_\kappa\subset\mathbb{R}^n$ to $\tilde U_\kappa\subset\mathbb{R}^n$. If for each $\kappa\in\mathscr{A}$ we have a distribution $u_\kappa\in \mathscr{D}'(\tilde U_\kappa)$ 
and \eqref{hormanderequ6.3.3}
is valid when $\kappa$ and $\kappa'$ belong to $\mathscr{A}$, then there exists one and only one distribution $u\in \mathscr{D}'(X)$
such that $(\kappa^{-1})^*u =u_\kappa$
for each $\kappa\in\mathscr{A}$.
\end{thm}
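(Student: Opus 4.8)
The plan is to prove the theorem in two stages, uniqueness first and then existence, treating $\mathscr{D}'$ as a sheaf on open subsets of $\mathbb{R}^n$ via Theorem \ref{hormanderthm2.2.1-2.2.4} and exploiting the functoriality of the pullback furnished by Theorem \ref{hormanderthm6.1.2}. Recall that a distribution on $X$ in the sense of Definition \ref{hormanderdefn6.3.3} is a compatible system $\{u_\lambda\}$ indexed by \emph{all} charts $\lambda$; the task is to extend the given family over $\mathscr{A}$ to one such system. Since every transition $\kappa\circ\lambda^{-1}$ is a diffeomorphism between open subsets of $\mathbb{R}^n$, hence a submersion, Theorem \ref{hormanderthm6.1.2} makes each pullback $(\kappa\circ\lambda^{-1})^*$ a well-defined continuous map on distributions, and its uniqueness clause yields the chain rule $(f\circ g)^*=g^*\circ f^*$: both sides are continuous linear maps sending a continuous $w$ to $w\circ f\circ g$, so they agree on the dense subspace of continuous functions and therefore everywhere. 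This functoriality is the engine behind every consistency check below.

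For uniqueness, suppose $u$ and $u'$ are distributions on $X$ both restricting to $u_\kappa$ for all $\kappa\in\mathscr{A}$. Fix an arbitrary chart $\lambda$. On each $\lambda(U_\kappa\cap U_\lambda)$ the cocycle relation \eqref{hormanderequ6.3.3} forces $u_\lambda=(\kappa\circ\lambda^{-1})^*u_\kappa=u'_\lambda$; as $\mathscr{A}$ covers $X$, these open sets cover $\tilde U_\lambda$, so $u_\lambda$ and $u'_\lambda$ agree near every point and hence coincide by the locality part of Theorem \ref{hormanderthm2.2.1-2.2.4}. Since $\lambda$ was arbitrary, $u=u'$.

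For existence, I would construct $u_\lambda$ for each chart $\lambda$ by gluing the local candidates $v_\kappa:=(\kappa\circ\lambda^{-1})^*u_\kappa\in\mathscr{D}'\big(\lambda(U_\kappa\cap U_\lambda)\big)$. On a triple overlap $\lambda(U_\kappa\cap U_{\kappa'}\cap U_\lambda)$ one checks $v_\kappa=v_{\kappa'}$ by writing $\kappa\circ\lambda^{-1}=(\kappa\circ\kappa'^{-1})\circ(\kappa'\circ\lambda^{-1})$ and applying the chain rule together with the given compatibility $u_{\kappa'}=(\kappa\circ\kappa'^{-1})^*u_\kappa$, which gives $v_\kappa=(\kappa'\circ\lambda^{-1})^*u_{\kappa'}=v_{\kappa'}$. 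With agreement on all overlaps, the gluing part of Theorem \ref{hormanderthm2.2.1-2.2.4} produces a unique $u_\lambda\in\mathscr{D}'(\tilde U_\lambda)$ restricting to each $v_\kappa$.

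It remains to confirm that $\{u_\lambda\}$ is a distribution on $X$ and extends the original data. For any two charts $\lambda,\mu$, the identity $u_\mu=(\lambda\circ\mu^{-1})^*u_\lambda$ is verified locally on each $\mu(U_\kappa\cap U_\lambda\cap U_\mu)$, where both sides collapse to $(\kappa\circ\mu^{-1})^*u_\kappa$ by construction and the chain rule, and is then globalized by Theorem \ref{hormanderthm2.2.1-2.2.4}; this is exactly \eqref{hormanderequ6.3.3}. Taking $\lambda=\kappa_0\in\mathscr{A}$, the candidate $v_{\kappa_0}=(\kappa_0\circ\kappa_0^{-1})^*u_{\kappa_0}=u_{\kappa_0}$ already covers all of $\tilde U_{\kappa_0}$, so the glued $u_{\kappa_0}$ is the original one. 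The main obstacle is not conceptual but organizational: keeping the many overlap computations consistent so that the cocycle condition propagates. Everything there reduces to the chain rule for pullbacks, after which each check is a purely local statement settled by the sheaf property of $\mathscr{D}'$.
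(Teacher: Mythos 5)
Your proof is correct. The paper itself gives no argument for this statement beyond the citation ``See \cite[Theorem 6.3.4]{hormander}''; your uniqueness-then-gluing argument, resting on the locality and gluing properties of $\mathscr{D}'$ (Theorem \ref{hormanderthm2.2.1-2.2.4}) and the chain rule $(f\circ g)^*=g^*\circ f^*$ extracted from the uniqueness clause of Theorem \ref{hormanderthm6.1.2}, is exactly the standard proof that the citation points to, so the two approaches coincide. The only step you leave tacit is that forming the pullback commutes with restriction to open subsets (used when you compare candidates on triple overlaps), but this follows from the same uniqueness clause applied on the smaller open set and is not a gap.
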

\begin{proof}
See \cite[Theorem 6.3.4]{hormander}.
\end{proof}
\subsection{Current}
According to  De Rham \cite{derham55} currents are analogy with the
usual definition of distributions.
\begin{defn}
\label{defn1-2.3}
The space of currents of dimension $p$ $($or degree $m-p$$)$ on $X$ is the space $\mathscr{D}'_p(X)$ of linear forms $T$ on $\mathscr{D}^p(X)$ such that the restriction of $T$ to all subspaces $\mathscr{D}^p(K),\,K\subset\subset X$$,$ is continuous. The degree is indicated by raising the index$,$ hence we set
$$
\mathscr{D}'^{m-p}(X)=\mathscr{D}'_p(X):=\text{topological dual}\,\left(\mathscr{D}^p(X)\right)'
$$
The space ${}^s\mathscr{D}'_p(X)={}^s\mathscr{D}'^{m-p}(X):=\left({}^s\mathscr{D}^p(X)\right)'$ is defined similarly and is called the space of currents of order $s$ on $X$.
\end{defn}
Let $U$ be an open set in $\mathbb{R}^n$. Then a distribution $u$ can be seen as a current with degree $0$ by
\begin{equation*}
  \langle u,\varphi\mathrm{d}x^1\wedge\cdots\wedge\mathrm{d}x^n\rangle:=\langle u,\varphi\rangle,\quad \forall\;\varphi\in\mathscr{D}(U).
\end{equation*}
For each $T\in \mathscr{D}'_p(U)$ can be written as
\begin{equation*}
T:=\frac{1}{p!} a_{I}\mathrm{d}x^I
\end{equation*}
with $a_I\in\mathscr{D}'(U)$ defined by
\begin{equation*}
\langle a_I,\varphi\rangle=\langle u,\varphi\mathrm{d}x^1\wedge\cdots\wedge\mathrm{d}x^n\rangle
=\frac{1}{(n-p)!}\delta_{I_p,I_p^c}\langle T,\varphi\mathrm{d}x^{I_p^c}\rangle,
\end{equation*}
where $\delta_{I_p,I_p^c}$ is the  multi-index Kronecker delta. A current is determined by the restriction to the sets in an open covering and Theorem
\ref{hormanderthm2.2.1-2.2.4} holds for $p$ currents in $\mathscr{D}_p'(U)$. In addition, we have an analogy of  Theorem \ref{hormanderthm6.3.4} as follows.
\begin{thm}
\label{hormanderdefn6.3.3revisedforcurrent}
Let $X$ be a  smooth oriented manifold with $\dim_{\mathbb{R}}X=n$ and countable basis and $\mathscr{A}$ an atlas on $X$ consisting of homeomorphism $\kappa$ of open set $U_\kappa\subset X$ to $\tilde U_\kappa\subset\mathbb{R}^n$.
Then there exists a family of $p$ currents $(a_\kappa)_{\kappa\in\mathscr{A}}$ $($$1\leq p\leq n$$)$ with  $a_\kappa\in\mathscr{D}'_{n-p}(\tilde U_\kappa)$ given by
\begin{equation}
a_\kappa:= \frac{1}{p!}a_{\kappa;i_1,\cdots,i_p}\mathrm{d}x^{i_1}_\kappa\wedge \cdots\wedge \mathrm{d}x_\kappa^{i_p}
\end{equation}
such that
\begin{equation}
\label{hormanderequ6.3.3current}
a_{\kappa';j_1,\cdots,j_p}=\left((\kappa\circ \kappa'^{-1})^*a_{\kappa;i_1,\cdots,i_p}\right)\frac{\partial x_\kappa^{i_{1}}}{\partial x_{\kappa'}^{j_{1}}}\cdots
\frac{\partial x_\kappa^{i_p}}{\partial x_{\kappa'}^{j_p}}\quad \mbox{in}\quad \kappa'(U_\kappa\cap U_{\kappa'}),
\end{equation}
if and only if there exists unique $p$ current $T\in\mathscr{D}'_p(X)$ such that
\begin{equation}
\label{localcurrentdefn}
\langle a_\kappa,\varphi\rangle=\langle T,\kappa^*\varphi\rangle,\quad \forall\;\varphi\in\mathscr{D}^p(\tilde U_\kappa)
\end{equation}
for each $\kappa\in\mathscr{A}$.
\end{thm}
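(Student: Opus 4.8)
The plan is to adapt the proof of the distribution gluing result Theorem \ref{hormanderthm6.3.4} to currents. The crucial observation is that the transformation rule (\ref{hormanderequ6.3.3current}) is exactly the coefficient form of the invariant pullback identity
$$a_{\kappa'}=(\kappa\circ\kappa'^{-1})^*a_\kappa\quad\text{in}\quad\kappa'(U_\kappa\cap U_{\kappa'}),$$
where the pullback of the current $a_\kappa$ by the diffeomorphism $\kappa\circ\kappa'^{-1}$ is defined componentwise via Theorem \ref{hormanderthm6.1.2} applied to each coefficient $a_{\kappa;i_1,\cdots,i_p}$. Indeed, writing $F:=\kappa\circ\kappa'^{-1}$ and combining the standard rule $F^*(\mathrm{d}x_\kappa^{i})=\frac{\partial x_\kappa^{i}}{\partial x_{\kappa'}^{j}}\mathrm{d}x_{\kappa'}^{j}$ for the pullback of forms with the pullback of distributions for the coefficients recovers precisely (\ref{hormanderequ6.3.3current}). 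Thus the statement reduces to establishing the bijective correspondence $T\leftrightarrow(a_\kappa)$ under the single-chart relation (\ref{localcurrentdefn}), with (\ref{hormanderequ6.3.3current}) serving as the cocycle condition.

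First I would treat the direction that produces $(a_\kappa)$ from a global $T$. I would simply \emph{define} $a_\kappa$ by (\ref{localcurrentdefn}); since $\kappa^*$ sends $\mathscr{D}^p(\tilde U_\kappa)$ continuously into forms with compact support in $U_\kappa$, the linear form $a_\kappa$ lies in $\mathscr{D}'_{n-p}(\tilde U_\kappa)$. The compatibility (\ref{hormanderequ6.3.3current}) then follows from functoriality of the pullback: for a test form $\psi$ supported in $\kappa(U_\kappa\cap U_{\kappa'})$ one has $\kappa'^*\circ(\kappa\circ\kappa'^{-1})^*=\kappa^*$ on the overlap, whence $\langle a_{\kappa'},(\kappa\circ\kappa'^{-1})^*\psi\rangle=\langle T,\kappa^*\psi\rangle=\langle a_\kappa,\psi\rangle$, which is the invariant form of (\ref{hormanderequ6.3.3current}).

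For the converse I would glue the compatible family into a global current. Fixing a partition of unity $\{\rho_\kappa\}$ subordinate to $\{U_\kappa\}$, I set, for $\Phi\in\mathscr{D}^p(X)$,
$$\langle T,\Phi\rangle:=\sum_\kappa\big\langle a_\kappa,(\kappa^{-1})^*(\rho_\kappa\Phi)\big\rangle.$$
The sum is locally finite, each term is well defined because $\rho_\kappa\Phi$ is compactly supported in $U_\kappa$, and linearity together with the continuity bound over each $\mathscr{D}^p(K)$ is immediate from the corresponding properties of the $a_\kappa$. That $T$ satisfies (\ref{localcurrentdefn}) and is unique then follows as in the distribution case: uniqueness holds because any current verifying (\ref{localcurrentdefn}) is determined on forms supported in a single chart, hence everywhere, by the localization principle for currents noted after Theorem \ref{hormanderthm2.2.1-2.2.4}.

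The main obstacle is the well-definedness of $T$ in the converse, i.e.\ its independence of the chosen partition of unity; this is the step that forces the cocycle condition (\ref{hormanderequ6.3.3current}). Replacing $\{\rho_\kappa\}$ by another partition and expanding, one must show that on each overlap $U_\kappa\cap U_\lambda$ the two contributions agree, which amounts to $\langle a_\kappa,(\kappa^{-1})^*\eta\rangle=\langle a_\lambda,(\lambda^{-1})^*\eta\rangle$ for $\eta$ supported in the overlap. This is exactly the invariant identity $a_\lambda=(\lambda\circ\kappa^{-1})^*a_\kappa$ combined with the functoriality $\lambda^*\circ(\lambda\circ\kappa^{-1})^*=\kappa^*$; once the pullback of currents and its transformation law are established as above, the remaining verification is routine.
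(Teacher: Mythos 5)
Your proposal is correct and follows essentially the same route as the paper: define $a_\kappa$ from $T$ by restriction and derive the cocycle condition from functoriality of pullbacks, then conversely glue a compatible family via a partition of unity and check \eqref{localcurrentdefn}. The only difference is one of detail: where you assert that \eqref{hormanderequ6.3.3current} is exactly the componentwise form of the invariant identity $a_{\kappa'}=(\kappa\circ\kappa'^{-1})^*a_\kappa$, the paper verifies this by an explicit coefficient computation in which the Jacobian determinant from the definition \eqref{hormanderequ6.1.1} of the distributional pullback cancels against the determinant produced by pulling back the complementary top-degree form.
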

\begin{proof}
For `if' direction, let $T\in\mathscr{D}'_p(X)$ and $\psi:=\kappa\circ \kappa'^{-1}$ defined on $\kappa'(\tilde U_\kappa\cap \tilde U_{\kappa'})$. Then for each $f\in \mathscr{D}(\kappa(U_{\kappa}\cap   U_{\kappa'}))$ and each multi-index $I_p=\{i_1,\cdots,i_p\}$ fixed, one infers
\begin{align}
\label{currentlocalcap1}
 \langle a_{\kappa;I_p},f\mathrm{d}x_\kappa\rangle
=&\frac{1}{(n-p)!}\delta_{I_p,I_p^c}\langle a_\kappa,f\mathrm{d}x_\kappa^{I_p^c}\rangle\\
=&\frac{1}{(n-p)!}\delta_{I_p,I_p^c}\langle T,(f \circ\kappa) \kappa^*\left(\mathrm{d}x_\kappa^{I_p^c}\right)\rangle\nonumber\\
=&\frac{1}{(n-p)!}\delta_{I_p,I_p^c}\langle T,((f \circ\psi)\circ\kappa') \kappa'^*\left(\psi^*\mathrm{d}x_\kappa^{I_p^c}\right)\rangle\nonumber\\
=&\frac{1}{(n-p)!}\delta_{I_p,I_p^c} \langle T ,\left((f\circ \psi) \frac{\partial x_\kappa^{i_{p+1}}}{\partial x_{\kappa'}^{j_{p+1}}}\cdots
\frac{\partial x_\kappa^{i_n}}{\partial x_{\kappa'}^{j_n}}\right)\circ \kappa'\kappa'^*\left(\mathrm{d}x_{\kappa'}^{J_p^c}\right)\rangle \nonumber\\
=&\frac{1}{(n-p)!}\delta_{I_p,I_p^c} \langle a_{\kappa'} , (f\circ \psi) \frac{\partial x_\kappa^{i_{p+1}}}{\partial x_{\kappa'}^{j_{p+1}}}\cdots
\frac{\partial x_\kappa^{i_n}}{\partial x_{\kappa'}^{j_n}} \mathrm{d}x_{\kappa'}^{J_p^c} \rangle \nonumber\\
=&\frac{1}{p!(n-p)!}\delta_{I_p,I_p^c}\delta_{J_p,J_p^c}\langle  a_{\kappa';J_p},(f\circ \psi) \frac{\partial x_\kappa^{i_{p+1}}}{\partial x_{\kappa'}^{j_{p+1}}}\cdots
\frac{\partial x_\kappa^{i_n}}{\partial x_{\kappa'}^{j_n}} \mathrm{d}x_{\kappa'}\rangle, \nonumber
\end{align}
where $I_p^c$ (resp. $J_p^c$) is the supplementary set of $I_p$ (resp. $J_p$) such that $I_p\cup I_p^c=\{1,\cdots,n\}$ (resp. $J_p\cup J_p^c=\{1,\cdots,n\}$).

A direct calculation, together with \eqref{hormanderequ6.1.1}, yields that
\begin{equation}
\label{currentlocalcap2}
\left(\psi^*a_{\kappa;I_p}\right)(f\circ\psi)
=\left(\det (\psi')\right)^{-1}\langle a_{\kappa;I_p},f\circ\psi\circ \psi^{-1}\rangle
=\left(\det (\psi')\right)^{-1}\langle a_{\kappa;I_p},f\rangle.
\end{equation}
 It follows from \eqref{currentlocalcap1} and \eqref{currentlocalcap2} that
\begin{equation}
\label{pcurrentlocalrelationorigin}
\frac{1}{p!(n-p)!}\delta_{I_p,I_p^c}\delta_{J_p,J_p^c}a_{\kappa';J_p}\frac{\partial x_\kappa^{i_{p+1}}}{\partial x_{\kappa'}^{j_{p+1}}}\cdots
\frac{\partial x_\kappa^{i_n}}{\partial x_{\kappa'}^{j_n}}=\left(\det ((\kappa\circ \kappa'^{-1})')\right) (\kappa\circ \kappa'^{-1})^*a_{\kappa;I_p},
\end{equation}
which is equivalent to \eqref{hormanderequ6.3.3current}. Indeed, it follows from \eqref{pcurrentlocalrelationorigin} that
\begin{align*}
&\left(\det ((\kappa\circ \kappa'^{-1})')\right) (\kappa\circ \kappa'^{-1})^*a_{\kappa;I_p}\frac{\partial x_\kappa^{i_{1}}}{\partial x_{\kappa'}^{k_{1}}}\cdots
\frac{\partial x_\kappa^{i_p}}{\partial x_{\kappa'}^{k_p}}\\
=&\frac{1}{p!(n-p)!}\delta_{I_p,I_p^c}\delta_{J_p,J_p^c}a_{\kappa';J_p}
\frac{\partial x_\kappa^{i_{1}}}{\partial x_{\kappa'}^{k_{1}}}\cdots
\frac{\partial x_\kappa^{i_p}}{\partial x_{\kappa'}^{k_p}}
\frac{\partial x_\kappa^{i_{p+1}}}{\partial x_{\kappa'}^{j_{p+1}}}\cdots
\frac{\partial x_\kappa^{i_n}}{\partial x_{\kappa'}^{j_n}}\\
=&\frac{1}{p!(n-p)!} \delta_{J_p,J_p^c}a_{\kappa';J_p}\delta_{K_p,J_p^c}
\left(\det ((\kappa\circ \kappa'^{-1})')\right)\\
=&a_{\kappa';K_p}\left(\det ((\kappa\circ \kappa'^{-1})')\right),
\end{align*}
as desired.

For `only if' direction, we choose a partition of unitary $1=\sum_{j}\chi_{\kappa_j}$ with $\chi_{\kappa_j}\in\mathscr{D}(\tilde U_{\kappa_j})$ for some $\kappa_j \in\mathscr{A}$. A direct calculation yields that the linear form $T$ defined by
\begin{equation}
\langle T,\varphi\rangle:=\sum_j\langle a_{\kappa_j},(\kappa_j^{-1})^*(\chi_{\kappa_j}\varphi)\rangle,
\quad\forall\;\varphi\in\mathscr{D}^{n-p}(X)
\end{equation}
is continuous in $\mathscr{D}^{n-p}(X)$. Let $T_\kappa\in \mathscr{D}_{n-p}'(\tilde U_\kappa)$ defined by \eqref{localcurrentdefn}. Then for each  $\varphi\in\mathscr{D}^{n-p}(\tilde U_\kappa)$, we have
\begin{align*}
\langle T_\kappa,\varphi\rangle
=&\langle T , \kappa ^*\varphi\rangle\\
=&\sum_j\langle a_{\kappa_j},(\kappa_j^{-1})^*(\chi_{\kappa_j}\kappa^*\varphi)\rangle\\
=&\sum_j\langle a_{\kappa_j}, (\chi_{\kappa_j}\circ\kappa_j^{-1})(\kappa\circ \kappa_j^{-1})^* \varphi \rangle\\
=&\sum_j\langle a_{\kappa_j}, (\kappa\circ \kappa_j^{-1})^* ((\chi_{\kappa_j}\circ\kappa^{-1})\varphi) \rangle\\
=&\sum_j\langle a_{\kappa},  ((\chi_{\kappa_j}\circ\kappa^{-1})\varphi) \rangle\\
=&\langle a_{\kappa},   \varphi\rangle,
\end{align*}
where we use the fact that Equation \eqref{hormanderequ6.3.3current} is equivalent to the fact that
\begin{equation*}
\langle a_{\kappa'},(\kappa\circ\kappa'^{-1})^*\varphi\rangle=\langle a_{\kappa},\varphi\rangle,
\quad\forall\;\varphi\in\mathscr{D}^{n-p}(\kappa(U_\kappa\cap U_{\kappa'})).
\end{equation*}
\end{proof}
It follows from Definition \ref{hormanderdefn6.3.3} and Theorem \ref{hormanderdefn6.3.3revisedforcurrent} that a distribution $u\in\mathscr{D}'(X)$ on $X$ can be viewed as a current with degree $0$.

Many of the operations available for differential forms can
be extended to currents by simple duality arguments.
 In general, if $A:\;\bigoplus\mathscr{D}^{p}(X)\to \bigoplus\mathscr{D}^{p}(X)$ is a map  of vector spaces which is continuous on $\bigoplus\mathscr{D}^{p}(K)$ for each compact set $K$, then it is possible to extend $A$ to a mapping currents.

Let $T\in{}^s\mathscr{D}'^p(X)={}^s\mathscr{D}'_{m-p}(X)$. The exterior derivative
$$
\mathrm{d} T\in {}^{s+1}\mathscr{D}'^{p+1}(X)={}^{s+1}\mathscr{D}'_{m-p-1}(X)
$$
is defined by
\begin{equation}
\label{equ1-2.6}
\langle \mathrm{d} T,\varphi\rangle=(-1)^{p+1}\langle T,\mathrm{d} \varphi\rangle,\quad \forall\;\varphi\in{}^{s+1}\mathscr{D}^{m-p-1}(X).
\end{equation}
The continuity of the linear form $\mathrm{d} T$ on ${}^{s+1}\mathscr{D}^{m-p-1}(X)$ follows from the continuity of the map $\mathrm{d}:\,{}^{s+1}\mathscr{D}^{m-p-1}(K)\to {}^{s}\mathscr{D}^{m-p}(K)$.

Let $T\in{}^s\mathscr{D}'^{p+1}(X)={}^s\mathscr{D}'_{m-p-1}(X)$ and $\xi\in\mathfrak{X}(X)$ a vector field. The interior product of $\xi$ and $T$
$$
i_\xi T\in {}^{s+1}\mathscr{D}'^{p }(X)={}^{s+1}\mathscr{D}'_{m-p}(X)
$$
is defined (see for example \cite{touzettoulouse2010}) by
\begin{equation}
\label{equ1-2.6innerproduct}
\langle i_\xi T,\varphi\rangle=(-1)^{p}\langle T,i_\xi\varphi\rangle,\quad \forall\;\varphi\in{}^{s+1}\mathscr{D}^{m-p}(X).
\end{equation}
The continuity of the linear form $i_\xi T$ on ${}^{s+1}\mathscr{D}^{m-p}(X)$ follows from the continuity of the map $i_\xi:\,{}^{s+1}\mathscr{D}^{m-p}(K)\to {}^{s}\mathscr{D}^{m-p+1}(K)$.

Let $T\in{}^s\mathscr{D}'^p(X)={}^s\mathscr{D}'_{m-p}(X)$. Then since the Lie derivative $\mathcal{L}_\xi=\mathrm{d}\circ i_\xi+i_\xi\circ \mathrm{d}$, we can define the Lie derivative of a current $T$
$$
\mathcal{L}_\xi T\in {}^{s+1}\mathscr{D}'^{p}(X)={}^{s+1}\mathscr{D}'_{m-p}(X)
$$
by
\begin{equation}
\label{equ1-2.6liederivative}
\langle \mathcal{L}_\xi T,\varphi\rangle=-\langle T,\mathcal{L}_\xi \varphi\rangle,\quad \forall\;\varphi\in{}^{s+1}\mathscr{D}^{m-p}(X).
\end{equation}
\begin{lem}
[Transverse Poincar\'e Lemma]
Let $T\in {}^s\mathscr{D}'_{n-p}(U_1\times U_2),$ where $ U_1\subset\mathbb{R}^k$ and $ U_2\subset\mathbb{R}^n$ are open sets. We denote by $t=(t^1,\cdots,t^k)$ $($resp. $x=(x^1,\cdots,x^n)$$)$ the coordinates of $\mathbb{R}^k$ $($resp. $\mathbb{R}^n$$)$. If $\mathrm{d}T=0$ and $i_{\partial/\partial t^i}T=\mathcal{L}_{\partial/\partial t^i}T=0$ with $1\leq i\leq k$, then there exist a $S \in {}^s\mathscr{D}'_{n-p+1}(U_1\times U_2)$ and  a smooth form $\Theta\in{}^s\mathscr{E}^{p}(U_1\times U_2)$ such that $T=\Theta+\mathrm{d}S$ and $i_{\partial/\partial t^i}S=\mathcal{L}_{\partial/\partial t^i}S=i_{\partial/\partial t^i}\Theta=\mathcal{L}_{\partial/\partial t^i}\Theta=\mathrm{d}\Theta=0$ with $1\leq i\leq k$.
\end{lem}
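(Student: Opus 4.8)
The plan is to peel off the leafwise variables $t=(t^1,\dots,t^k)$, reduce the assertion to the classical Poincar\'e lemma for currents on the transverse open set $U_2\subset\mathbb{R}^n$, and then lift the resulting decomposition back to $U_1\times U_2$ while checking that every object produced is basic.

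First I would exploit the hypotheses $i_{\partial/\partial t^i}T=\mathcal{L}_{\partial/\partial t^i}T=0$. The vanishing of the contractions forces the coordinate expression of $T$ to involve only the transverse differentials, so that $T=\frac{1}{p!}\,T_{i_1\cdots i_p}\,\mathrm{d}x^{i_1}\wedge\cdots\wedge\mathrm{d}x^{i_p}$ with distributional coefficients $T_{i_1\cdots i_p}\in\mathscr{D}'(U_1\times U_2)$; computing $\mathcal{L}_{\partial/\partial t^i}T$ on such an expression leaves only $\sum(\partial_{t^i}T_{i_1\cdots i_p})\,\mathrm{d}x^{i_1}\wedge\cdots\wedge\mathrm{d}x^{i_p}$, so $\mathcal{L}_{\partial/\partial t^i}T=0$ yields $\partial_{t^i}T_{i_1\cdots i_p}=0$ for $1\le i\le k$. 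Applying Proposition~\ref{ramondexercise4.2.9} to each coefficient, I obtain $T_{i_1\cdots i_p}=1\otimes v_{i_1\cdots i_p}$ with $v_{i_1\cdots i_p}\in\mathscr{D}'(U_2)$, i.e. $T=1\otimes\tilde T$ for a current $\tilde T\in{}^s\mathscr{D}'_{n-p}(U_2)$. Because $T$ carries no $\mathrm{d}t^i$ and has $t$-independent coefficients, the ambient differential reduces to the transverse one, so the hypothesis $\mathrm{d}T=0$ is exactly $\mathrm{d}\tilde T=0$ on $U_2$.

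Next, on the open set $U_2\subset\mathbb{R}^n$ I would invoke the regularization (Poincar\'e) theorem for currents of de Rham \cite{derham55} (see also \cite{demaillybook1}): a $\mathrm{d}$-closed current of degree $p$ is cohomologous, through the image of $\mathrm{d}$, to a smooth closed form, giving $\tilde T=\tilde\Theta+\mathrm{d}\tilde S$ with $\tilde\Theta\in{}^s\mathscr{E}^p(U_2)$ smooth and closed and $\tilde S\in{}^s\mathscr{D}'_{n-p+1}(U_2)$, the order $s$ being preserved. Setting $\Theta:=1\otimes\tilde\Theta$ and $S:=1\otimes\tilde S$ and using once more that $\mathrm{d}$ acts as the transverse differential on such tensor products, I get $T=\Theta+\mathrm{d}S$ with $\mathrm{d}\Theta=0$. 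The remaining basic conditions are then immediate: since neither $\Theta$ nor $S$ contains any $\mathrm{d}t^i$, the interior products $i_{\partial/\partial t^i}\Theta$ and $i_{\partial/\partial t^i}S$ vanish by \eqref{equ1-2.6innerproduct}; since their coefficients have the form $1\otimes(\cdot)$, Proposition~\ref{ramondexercise4.2.7} gives $\partial_{t^i}$-invariance, whence $\mathcal{L}_{\partial/\partial t^i}\Theta=\mathcal{L}_{\partial/\partial t^i}S=0$ from \eqref{equ1-2.6liederivative} together with $\mathcal{L}_{\partial/\partial t^i}=\mathrm{d}\,i_{\partial/\partial t^i}+i_{\partial/\partial t^i}\,\mathrm{d}$.

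The main obstacle is the reduction step for \emph{currents} rather than scalar distributions: one must verify that the coefficient-wise factorization $T=1\otimes\tilde T$ is compatible with the duality definitions of $\mathrm{d}$, $i_\xi$ and $\mathcal{L}_\xi$ in \eqref{equ1-2.6}, \eqref{equ1-2.6innerproduct} and \eqref{equ1-2.6liederivative}, so that a $\mathrm{d}$-closed basic current genuinely descends to a closed current on the transverse factor and so that the lifted $S$ and $\Theta$ are bona fide currents and forms of the prescribed order. A secondary point requiring care is that $U_2$ is an \emph{arbitrary} open set, so $\tilde\Theta$ need not vanish; one therefore really needs the smooth-representative (regularization) form of the current Poincar\'e lemma, and not merely exactness on a contractible chart.
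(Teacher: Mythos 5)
Your proposal is correct and follows essentially the same route as the paper: use the contraction and Lie-derivative hypotheses together with Proposition \ref{ramondexercise4.2.9} to factor $T=1\otimes\tilde T$ with $\tilde T$ a closed current on $U_2$, apply the smooth-representative (regularization) form of the Poincar\'e lemma for currents on $U_2$, and lift $\tilde\Theta$ and $\tilde S$ back via $1\otimes(\cdot)$ using Proposition \ref{ramondexercise4.2.7}. Your closing remark that one needs the regularization statement on an arbitrary open $U_2$ rather than mere exactness on a contractible chart is exactly the point the paper relies on when citing the current-level Poincar\'e lemma.
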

\begin{proof}
Since $i_{\partial/\partial t^i}T=\mathcal{L}_{\partial/\partial t^i}T=0$ with $1\leq i\leq k$, we have
\begin{equation}
\label{tpoincarelemmalocalt}
T=\sum_{1\leq i_1<\cdots<i_p\leq n}T_{i_1,\cdots,i_p}\mathrm{d}x^{i_1}
\wedge\cdots\wedge \mathrm{d}x^{i_p}
\end{equation}
with $\frac{\partial T_{i_1,\cdots,i_p}}{\partial t^\ell}=0$ for $1\leq \ell\leq k$.
It follows from \eqref{tpoincarelemmalocalt} Proposition \ref{ramondexercise4.2.9}  that
\begin{equation}
T_{i_1,\cdots,i_p}=1\otimes \tilde T_{i_1,\cdots,i_p},
\end{equation}
where $\tilde T_{i_1,\cdots,i_p}\in \mathscr{D}'(U_2)$
and that
\begin{equation}
\label{tpoincarelemmadtildet=0}
\mathrm{d}_{\mathbb{R}^n}\tilde T=0,
\end{equation}
where
\begin{equation*}
\tilde T=\sum_{1\leq i_1<\cdots<i_p\leq n}\tilde T_{i_1,\cdots,i_p}\mathrm{d}x^{i_1}
\wedge\cdots\wedge \mathrm{d}x^{i_p}.
\end{equation*}
It follows the standard Poincar\'e Lemma (see for example \cite[Section 2.D.4]{demaillybook1}) and \eqref{tpoincarelemmadtildet=0} that
\begin{equation}
\tilde T= \Theta +\mathrm{d}_{\mathbb{R}^n}\tilde S
\end{equation}
where $\tilde S\in{}^s\mathscr{D}'_{n-p+1}(U_2)$ given by
$$
\tilde S=\sum_{1\leq i_1<\cdots<i_{p-1}\leq n}\tilde{S}_{i_1,\cdots,i_{p-1}}
\mathrm{d}x^{i_1}
\wedge\cdots\wedge \mathrm{d}x^{i_{p-1}}
$$
and $\mathrm{d}_{\mathbb{R}^n}$ closed $p$ form $ \Theta\in \mathscr{E}^{p}(U_2)$ given by
$$
 \Theta=\sum_{1\leq i_1<\cdots<i_p\leq n}\Theta_{i_1,\cdots,i_p}\mathrm{d}x^{i_1}
\wedge\cdots\wedge \mathrm{d}x^{i_p}
$$
with $\frac{\partial \Theta_{i_1,\cdots,i_p}}{\partial t^\ell}=0$ for $1\leq \ell\leq k$.
Hence $\Theta$ and $S$ given by
$$
 S=\sum_{1\leq i_1<\cdots<i_{p-1}\leq n}(1\otimes\tilde{S}_{i_1,\cdots,i_{p-1}})
\mathrm{d}x^{i_1}
\wedge\cdots\wedge \mathrm{d}x^{i_{p-1}}.
$$
are the required currents from Proposition  \ref{ramondexercise4.2.7}.
\end{proof}
 Now
we have $\mathrm{d}\Theta=0$ and we deduce from applying the usual Poincar\'{e} lemma (see for example \cite[(1.22)]{demaillybook1}  to $\Theta$ that
\begin{thm}
\label{tthmbasiccurrentcohomology}
Let $T\in {}^s\mathscr{D}'_{n-p}(U_1\times U_2),$ where $ U_1\subset\mathbb{R}^k$ and $ U_2\subset\mathbb{R}^n$ are open sets and in addition $U_2$ is  star-shaped open. We denote by $t=(t^1,\cdots,t^k)$ $($resp. $x=(x^1,\cdots,x^n)$$)$ the coordinates of $\mathbb{R}^k$ $($resp. $\mathbb{R}^n$$)$. If $\mathrm{d}T=0$ and $i_{\partial/\partial t^i}T=\mathcal{L}_{\partial/\partial t^i}T=0$ with $1\leq i\leq k$, then there exists a $S \in {}^s\mathscr{D}'_{n-p+1}(U_1\times U_2)$ such that $T=\mathrm{d}S$ and $i_{\partial/\partial t^i}S=\mathcal{L}_{\partial/\partial t^i}S=0$ with $1\leq i\leq k$.
\end{thm}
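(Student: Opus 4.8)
The plan is to build directly on the Transverse Poincar\'e Lemma established just above, whose only obstruction to full exactness is the closed smooth basic form $\Theta$; the extra hypothesis that $U_2$ is star-shaped is precisely what lets us absorb $\Theta$ into an exact term. First I would apply that lemma to $T$: since $\mathrm{d}T=0$ and $i_{\partial/\partial t^i}T=\mathcal{L}_{\partial/\partial t^i}T=0$ for $1\leq i\leq k$, it produces a current $S\in{}^s\mathscr{D}'_{n-p+1}(U_1\times U_2)$ and a smooth form $\Theta\in{}^s\mathscr{E}^p(U_1\times U_2)$ with $T=\Theta+\mathrm{d}S$, where both $S$ and $\Theta$ satisfy $i_{\partial/\partial t^i}(\cdot)=\mathcal{L}_{\partial/\partial t^i}(\cdot)=0$ and moreover $\mathrm{d}\Theta=0$.

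Next I would exploit the basicity of $\Theta$. As in the proof of the preceding lemma, $\Theta=\sum_{i_1<\cdots<i_p}\Theta_{i_1,\cdots,i_p}\,\mathrm{d}x^{i_1}\wedge\cdots\wedge\mathrm{d}x^{i_p}$ with $\partial\Theta_{i_1,\cdots,i_p}/\partial t^\ell=0$ for all $\ell$, so $\Theta=1\otimes\tilde\Theta$ for a smooth closed $p$-form $\tilde\Theta$ on $U_2$. Since $U_2$ is star-shaped, the usual Poincar\'e lemma (e.g. \cite[(1.22)]{demaillybook1}) furnishes a smooth $(p-1)$-form $\tilde\Psi$ on $U_2$ with $\tilde\Theta=\mathrm{d}_{\mathbb{R}^n}\tilde\Psi$. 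Lifting through the tensor product, I set $\Psi:=1\otimes\tilde\Psi$; this is a smooth form on $U_1\times U_2$ with $t$-independent coefficients and no $\mathrm{d}t^i$ components, so $i_{\partial/\partial t^i}\Psi=\mathcal{L}_{\partial/\partial t^i}\Psi=0$, and $\mathrm{d}\Psi=1\otimes\mathrm{d}_{\mathbb{R}^n}\tilde\Psi=\Theta$ by Proposition \ref{ramondexercise4.2.7}.

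Finally I would set $\tilde S:=\Psi+S$, so that $\mathrm{d}\tilde S=\mathrm{d}\Psi+\mathrm{d}S=\Theta+\mathrm{d}S=T$. Since $\Psi$ is smooth it is a current of order $0$, hence of order $s$, and therefore $\tilde S\in{}^s\mathscr{D}'_{n-p+1}(U_1\times U_2)$; both summands being basic, $i_{\partial/\partial t^i}\tilde S=\mathcal{L}_{\partial/\partial t^i}\tilde S=0$, which is exactly the current demanded by the theorem. The genuine content beyond invoking the two Poincar\'e lemmas is verifying that the primitive $\Psi$ of $\Theta$ may be chosen basic, and this is the step I expect to need the most care; it is, however, immediate from the $1\otimes(\cdot)$ structure, since the Poincar\'e lemma is applied on the factor $U_2$ alone and thus introduces neither dependence on $t$ nor any $\mathrm{d}t^i$ term.
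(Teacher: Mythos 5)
Your proposal is correct and follows essentially the same route as the paper: the paper likewise obtains $T=\Theta+\mathrm{d}S$ from the Transverse Poincar\'e Lemma and then, using that $U_2$ is star-shaped, applies the usual Poincar\'e lemma to the closed basic form $\Theta$ to absorb it into an exact term. Your added verification that the primitive of $\Theta$ can be taken of the form $1\otimes\tilde\Psi$, hence basic, is exactly the (implicit) content of the paper's one-line deduction.
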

Similar argument gives  the transverse  Dolbeault-Grothendieck lemma as follows.
\begin{lem}
[Transverse Dolbeault-Grothendieck Lemma]
\label{tdglemma}
Let $u\in {}^s\mathscr{E}^{p,q}(U_1\times U_2,\mathbb{C})$ $($resp. $T\in {}^s\mathscr{D}'_{n-p,n-q}(U_1\times U_2)$$),$ where $ U_1\subset\mathbb{R}^k$ and $ U_2\subset\mathbb{C}^n$ containing $0$ are open sets. We denote by $t=(t^1,\cdots,t^k)$ $($resp. $z=(z^1,\cdots,z^n)$$)$ the coordinates of $\mathbb{R}^k$ $($resp. $\mathbb{C}^n$$)$. If $\bar\partial_{\mathbb{C}^n} u=0$ and $i_{\partial/\partial t^i}u=\mathcal{L}_{\partial/\partial t^i}u=0$ with $1\leq i\leq k,$ then we have
\begin{enumerate}
\item If $q=0,$ then $u=\sum_{1\leq i_1<\cdots<i_p\leq n}u_{i_1,\cdots,i_p}\mathrm{d}z^{i_1}\wedge\cdots\wedge\mathrm{d}z^{i_p}$ where $u_{i_1,\cdots,i_p}$ is independent of $t$ and holomorphic for $z$.
\item If $q\geq 1,$ then there exists a neighborhood $V_2\subset U_2$ of $0$ and a form $v\in{}^s\mathscr{E}^{p,q-1}(U_1\times V_2,\mathbb{C})$ $($resp. $v\in {}^s\mathscr{D}'_{n-p,n-q+1}(U_1\times V_2)$$)$ such that $u=\bar\partial_{\mathbb{C}}v$ on $U_1\times V_2$ and $i_{\partial/\partial t^i}v=\mathcal{L}_{\partial/\partial t^i}v=0$ with $1\leq i\leq k$.
\end{enumerate}
\end{lem}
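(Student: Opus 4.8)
The plan is to reduce the statement to the classical Dolbeault-Grothendieck lemma on $U_2\subset\mathbb{C}^n$ by means of the tensor-product decomposition already exploited in the proof of the Transverse Poincar\'e Lemma, so that all of the foliation-specific content is absorbed into the single identity $u=1\otimes\tilde u$. In effect this is a formal transcription of the preceding argument with $\mathrm{d}$ replaced by $\bar\partial$ and the real Poincar\'e lemma replaced by its $\bar\partial$-counterpart.

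First I would observe that the hypotheses $i_{\partial/\partial t^i}u=\mathcal{L}_{\partial/\partial t^i}u=0$ ($1\leq i\leq k$) force $u$ to have no $\mathrm{d}t^i$-components and to have coefficients annihilated by every $\partial/\partial t^\ell$; thus in the distinguished coordinates one may write
\[
u=\frac{1}{p!q!}u_{i_1,\cdots,i_p,\bar j_1,\cdots,\bar j_q}\,\mathrm{d}z^{i_1}\wedge\cdots\wedge\mathrm{d}z^{i_p}\wedge\mathrm{d}\bar z^{j_1}\wedge\cdots\wedge\mathrm{d}\bar z^{j_q},
\]
with $\partial u_{i_1,\cdots,i_p,\bar j_1,\cdots,\bar j_q}/\partial t^\ell=0$. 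Invoking Proposition \ref{ramondexercise4.2.9} on each coefficient gives $u_{i_1,\cdots,i_p,\bar j_1,\cdots,\bar j_q}=1\otimes\tilde u_{i_1,\cdots,i_p,\bar j_1,\cdots,\bar j_q}$ with $\tilde u_{i_1,\cdots,i_p,\bar j_1,\cdots,\bar j_q}\in\mathscr{D}'(U_2)$ (resp. $C^s(U_2,\mathbb{C})$ in the smooth case), so that $u=1\otimes\tilde u$, where $\tilde u$ is the corresponding basic $(p,q)$-form (resp. current) on $U_2$. Since the $1\otimes\cdot$ construction commutes with $\partial/\partial\bar z^j$ by Proposition \ref{ramondexercise4.2.7}, the hypothesis $\bar\partial_{\mathbb{C}^n}u=0$ is equivalent to $\bar\partial_{\mathbb{C}^n}\tilde u=0$.

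With the reduction in hand I would treat the two cases separately. When $q=0$, the relation $\bar\partial_{\mathbb{C}^n}\tilde u=0$ says that each coefficient $\tilde u_{i_1,\cdots,i_p}$ is $\bar\partial$-closed; in the smooth case this is precisely holomorphicity, and in the current case the regularity is supplied by Lemma \ref{tlem1-3.30} (equivalently Corollary I-3.30 of \cite{demaillybook1}), which upgrades a $\bar\partial$-closed distribution to a holomorphic function. Pulling back through $1\otimes\cdot$ then yields the asserted representation, with coefficients independent of $t$ and holomorphic in $z$. When $q\geq 1$, I would apply the classical Dolbeault-Grothendieck lemma (in its smooth and its current forms, as in \cite{demaillybook1}) to $\tilde u$: this produces a neighborhood $V_2\subset U_2$ of $0$ and a form (resp. current) $\tilde v\in{}^s\mathscr{E}^{p,q-1}(V_2,\mathbb{C})$ (resp. ${}^s\mathscr{D}'_{n-p,n-q+1}(V_2)$) with $\tilde u=\bar\partial_{\mathbb{C}^n}\tilde v$ on $V_2$. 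Setting $v:=1\otimes\tilde v$, Proposition \ref{ramondexercise4.2.7} guarantees $i_{\partial/\partial t^i}v=\mathcal{L}_{\partial/\partial t^i}v=0$ together with $\bar\partial_{\mathbb{C}}v=1\otimes\bar\partial_{\mathbb{C}^n}\tilde v=1\otimes\tilde u=u$ on $U_1\times V_2$, as required.

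The one genuinely delicate point is the current version of the $\bar\partial$-Poincar\'e lemma and the bookkeeping of the order $s$: one must ensure that solving $\bar\partial_{\mathbb{C}^n}\tilde v=\tilde u$ preserves membership in ${}^s\mathscr{D}'$, and that the local primitive can be chosen on a single neighborhood $V_2$ of the origin. I would dispose of this exactly as in the classical setting of \cite{demaillybook1}, where the $\bar\partial$-Poincar\'e lemma for currents is established together with the requisite regularity and order statements; the transverse setting adds nothing new here, since the $t$-variables are entirely inert after the decomposition $u=1\otimes\tilde u$ and every operation used commutes with $1\otimes\cdot$ by Proposition \ref{ramondexercise4.2.7}.
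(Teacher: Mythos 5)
Your proposal is correct and follows essentially the same route the paper intends: the paper gives no separate argument for Lemma \ref{tdglemma} beyond the remark that a ``similar argument'' to the Transverse Poincar\'e Lemma applies, and that argument is precisely your reduction $u=1\otimes\tilde u$ via Proposition \ref{ramondexercise4.2.9}, followed by the classical Dolbeault--Grothendieck lemma of \cite{demaillybook1} on $U_2$ and the compatibility of $1\otimes\cdot$ with differentiation from Proposition \ref{ramondexercise4.2.7} (with Lemma \ref{tlem1-3.30} handling the $q=0$ regularity). Your explicit attention to the order $s$ and to the choice of the neighborhood $V_2$ is a point the paper leaves implicit, but it is resolved exactly as you say, by the corresponding statements in \cite{demaillybook1}.
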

\end{appendix}


\end{document}